\theoremstyle{plain}
\newtheorem{theorem}{Theorem}[section]
\newtheorem{corollary}[theorem]{Corollary}
\newtheorem{proposition}[theorem]{Proposition}
\newtheorem*{theorem*}{Theorem}
\newtheorem*{claim*}{Claim}
\newtheorem*{lemma*}{Lemma}
\theoremstyle{definition}
\newtheorem{example}[theorem]{Example}
\newcommand\RP{\mathbb{R}{\rm P}}
\newcommand{\R}{\mathbb{R}}
\newcommand{\OO}{\mathrm{O}}
\newcommand{\SO}{\mathrm{SO}}
\newcommand{\Z}{\mathbb{Z}}
\newcommand{\F}{\mathbb{F}}
\newcommand{\B}{\mathrm{B}}
\newcommand{\E}{\mathrm{E}}
\newcommand{\pt}{\mathrm{pt}}
\DeclareMathOperator{\im}{im}
\newcommand{\id}{\operatorname{id}}
\newcommand{\ind}{\operatorname{Index}}
\newcommand{\res}{\operatorname{res}}
\begin{document}
% -------------------------------------------------------------------%

\title{Index of Grassmann manifolds and orthogonal shadows}

% -------------------------------------------------------------------%

% -------------------------------------------------------------------%

\author[Barali\'c]{Djordje Barali\'c}
\thanks{The research by Djordje Barali\'c leading to these results has
        received funding from the Grant 174020 of the Ministry for Education and Science of the Republic of
Serbia.}
\address{Mat. Institut SANU, Knez Mihailova 36, 11001 Beograd, Serbia}
\email{djbaralic@mi.sanu.ac.rs}

\author[Blagojevi\'c]{Pavle V. M. Blagojevi\'{c}}
\thanks{The research by Pavle V. M. Blagojevi\'{c} leading to these results has received funding from DFG via the Collaborative Research Center TRR~109 ``Discretization in Geometry and Dynamics'', and the grant ON 174008 of the Serbian Ministry of Education and Science.
This material is based upon work supported by the National Science Foundation under Grant No. DMS-1440140 while Pavle V. M. Blagojevi\'{c} was in residence at the Mathematical Sciences Research Institute in Berkeley, California, during the Fall of 2017.}
\address{Inst. Math., FU Berlin, Arnimallee 2, 14195 Berlin, Germany\hfill\break%
\mbox{\hspace{4mm}}Mat. Institut SANU, Knez Mihailova 36, 11000 Beograd, Serbia}
\email{blagojevic@math.fu-berlin.de}

\author[Karasev]{Roman Karasev}
\thanks{The research by Roman~Karasev leading to these results has received funding from the Federal professorship program grant 1.456.2016/1.4, the Russian Science Foundation grant 18-11-00073, and the Russian Foundation for Basic Research grant 18-01-00036.}
\address{Moscow Institute of Physics and Technology, Institutskiy per. 9, Dolgoprudny, Russia 141700\hfill\break
\mbox{\hspace{4mm}}Institute for Information Transmission Problems RAS, Bolshoy Karetny per. 19, Moscow, Russia 127994}
\email{r\_n\_karasev@mail.ru}

\author[Vu\v{c}i\'c]{Aleksandar Vu\v{c}i\'c}
\address{University of Belgrade, Faculty of Mathematics, Studentski Trg 16, 11000 Beograd, Serbia}
\email{avucic@matf.bg.ac.rs}

\date{\today}

\maketitle

\begin{abstract}
In this paper we study the $\Z/2$ action on real Grassmann manifolds $G_{n}(\R^{2n})$ and $\widetilde{G}_{n}(\R^{2n})$ given by taking (appropriately oriented) orthogonal complement.
We completely evaluate the related $\Z/2$ Fadell--Husseini index utilizing a novel computation of the Stiefel--Whitney classes of the wreath product of a vector bundle.
These results are used to establish the following geometric result about the orthogonal shadows of a convex body: For $n=2^a (2 b+1)$, $k=2^{a+1}-1$, $C$ a convex body in $\R^{2n}$, and $k$ real valued functions $\alpha_1,\ldots,\alpha_k$ continuous on convex bodies in $\R^{2n}$ with respect to the Hausdorff metric, there exists a subspace $V\subseteq\R^{2n}$ such that projections of $C$ to $V$ and its orthogonal complement $V^{\perp}$ have the same value with respect to each function $\alpha_i$, which is  $\alpha_i (p_V(C))=\alpha_i (p_{V^\perp} (C))$ for all $1\leq i\leq k$.

\end{abstract}

% -------------------------------------------------------------------%

%%%%%%%%%%%%%%%%%%%%%%%%%%%%%%%%%%%%%%%%%%%%%%%%%%%%%%%%%%%%%%%%%%%%%%%%%%%%%%%%%%%%%
\section{Introduction}
\label{sec : Introduction and the statement of the main results }
%%%%%%%%%%%%%%%%%%%%%%%%%%%%%%%%%%%%%%%%%%%%%%%%%%%%%%%%%%%%%%%%%%%%%%%%%%%%%%%%%%%%%

The Grassmann manifold of all $n$ dimensional linear subspaces in the vector space $V$ over some field is one of the classical and widely studied objects of algebraic topology with important applications in differential geometry and algebraic geometry.
In this paper we study a particular free $\Z/2$ action on a real Grassmann manifolds induced by taking orthogonal complements and use its properties to present an interesting geometric application.
The main tool in the study of this action is the ideal valued index theory of Fadell and Husseini.

\medskip
A variety of different index theories, for the study of the non-existence of equivariant maps, were introduced and considered over the last seven decades.
In 1952 Krasnosel'ski\u{i} \cite{Krasnoselski1952} introduced a numerical $\Z/2$-index for the subsets of spheres, which he called {\em genus}.
He used genus to estimate the number of critical points of a particular class of weakly continuous functionals on a spheres in a Hilbert space; consult also \cite{Krasnoselski1984}.
Yang in 1955 introduced yet another $\Z/2$-index calling it {\em B-index} with the aim of obtaining results of Borsuk--Ulam type \cite{Yang1955}.
The work of Conner and Floyd followed at the beginning of '60s and the {\em co-index} was introduced \cite{Conner1960,Conner1962}.
At the end of '80s Fadell and Husseini in series of papers introduced a general  index theory and applied it on various problems \cite{Fadell1985,Fadell1987,Fadell1987-02,Fadell1988,Fadell1989}.
Further refinements of the Fadell and Husseini work were done by Volovikov in \cite{Volovikov2002}.
Since then computation of a concrete index for the given space and the given action was, and still is, a formidable, and by now a classical, problem.
For example, the work of 
Volovikov \cite{Volovikov1992}, 
Petrovi\'c \cite{Petrovic1997}, 
Hara \cite{Hara2005}, 
Crabb \cite{Crabb2012},
Blagojevi\'c and Karasev \cite{blagojevic-karasev2012}, 
Petrovi\'{c} and  Prvulovi\'{c} \cite{Petrovic2013},
Blagojevi\'c, L\"uck and Ziegler \cite{Blagojevic2012},
Simon \cite{Simon2016}
illustrate wide diversity of novel ideas and methods needed to be employed in course of computation of different Fadell--Husseini indexes.  

\medskip
Let $n\geq 1$ and $1\leq k\leq n$ be integers.
The {\em real Grassmann manifold} of all $n$ dimensional linear subspaces in the Euclidean space $\R^{n+k}$ is denoted by $G_{n}(\R^{n+k})$.
Classically, as a homogeneous space, it is defined to be the quotient
\[
G_{n}(\R^{n+k}):=\OO(n+k)/(\OO(n)\times\OO(k)),
\]
where $\OO(n)$ denotes the orthogonal group.
The {\em real oriented Grassmann manifold} of all oriented $n$ dimensional linear subspaces in the Euclidean space $\R^{n+k}$ is denoted by $\widetilde{G}_{n}(\R^{n+k})$, and is defined to be
\[
\widetilde{G}_{n}(\R^{n+k}):=\SO(n+k)/(\SO(n)\times\SO(k)),
\]
where $\SO(n)$ denotes the special orthogonal group.
The inclusion map of the pairs
\[
(\SO(n+k),\SO(n)\times\SO(k))\hookrightarrow (\OO(n+k),\OO(n)\times\OO(k))
\]
induces the map of quotients $c\colon \widetilde{G}_{n}(\R^{n+k})\longrightarrow G_{n}(\R^{n+k})$ which is a double cover.

\medskip
Now we specialize to the situation where the integers $k$ and $n$ coincide.
Let $\Z/2=\langle\omega\rangle$ be the cyclic group of order $2$ generated by $\omega$.
A $\Z/2$ action on real Grassmann manifolds $G_{n}(\R^{2n})$ and $\widetilde{G}_{n}(\R^{2n})$  we consider is defined by sending an $n$-dimensional (oriented) subspace $V$ to its (appropriately oriented) orthogonal complement $V^{\perp}$, that means $\omega\cdot V= V^{\perp}$.
 In the case when $n$ is even these actions can be lifted to actions on $\OO(2n)$ and $\SO(2n)$ in such a way that the quotient maps $p_1\colon \OO(2n)\longrightarrow G_{n}(\R^{2n})$ and $p_2\colon \SO(2n)\longrightarrow \widetilde{G}_{n}(\R^{2n})$ are $\Z/2$-maps.
Indeed, for $(v_1,\ldots,v_n,v_{n+1},\ldots,v_{2n})$ and element of $\OO(2n)$, or $\SO(2n)$ we set
\[
\omega\cdot (v_1,\ldots,v_n,v_{n+1},\ldots,v_{2n})= (v_{n+1},\ldots,v_{2n},v_1,\ldots,v_n).
\]
Thus, for $n$ even we have the following commutative square of $\Z/2$-maps:
\[
\xymatrix{
\SO(2n)\ar[r]^-{p_2}\ar[d]^{i} & \widetilde{G}_{n}(\R^{2n})\ar[d]^{c}\\
\OO(2n)\ar[r]^-{p_1} & G_{n}(\R^{2n}),
}
\]
where $i\colon \SO(2n) \longrightarrow \OO(2n)$ is the inclusion, and $c\colon \widetilde{G}_{n}(\R^{n+k})\longrightarrow G_{n}(\R^{n+k})$ is the double cover map.

\medskip
The central result of this paper is the evaluation of the Fadell--Husseini index \cite{Fadell1988} of the Grasmmann manifolds with respect to the $\Z/2$ action we just introduced.
Before we state the result we briefly recall the definition and some basic properties of the Fadell--Husseini index.
For a finite group $G$ and a $G$-space $X$ the {\em Fadell--Husseini index} with the coefficients in the fields $\F$ is the  kernel of the following map in cohomology
\[
	\ind_G (X;\F) :=\ker\big( \pi_X^*\colon H^*(\mathrm{E}G\times_G\mathrm{pt};\F)\longrightarrow H^*(\mathrm{E}G\times_G X;\F)\big),
\]
where the map $\pi_X\colon \mathrm{E}G\times_GX\longrightarrow\mathrm{E}G\times_G\mathrm{pt}$ is induced by the $G$-map $X\longrightarrow\mathrm{pt}$.
The $G$-action on the point $\mathrm{pt}$ is assumed to be trivial.
The space $\mathrm{E}G\times_GX$ is known as the Borel construction of the $G$-space $X$.
There are natural isomorphisms  $ H^*(\mathrm{E}G\times_G\mathrm{pt};\F)\cong H^*(\mathrm{B}G;\F)\cong H^*(G;\F)$ and therefore we do not distinguish them.
For the cohomology of the group $\Z/2$ we fix the following notation
\[
H^*(\Z/2;\F_2)= H^*(\B(\Z/2);\F_2)=\F_2[t],
\]
with $\deg(t)=1$.
The essential property of the Fadell--Husseini index \cite[p.\,74]{Fadell1988} is that:
If $X$ and $Y$ are $G$-spaces and if there exists a continuous $G$-map $X\longrightarrow Y$ then
\[
\ind_G (X;\F)\supseteq \ind_G (Y;\F).
\]
For example, it is a known fact that the Fadell--Husseini index of the sphere $S^{n-1}$, equipped with any free $\Z/2$ action, is the ideal generated by $t^{n}$, that is $\ind_{\Z/2}(S^{n-1};\F_2)=\langle t^n\rangle$.

The Fadell--Husseini index of a path connected $G$-space $X$ with coefficients in a field $\F$ can be computed from the Serre spectral sequence associated with the Borel construction fibration
\[
\xymatrix{
X\ar[r] & \E G\times_G X\ar[r] & \B G,
}
\]
whose $E_2$-term is given by
\[
E_2^{i,j}(\E G\times_G X)=H^i(\B G; \mathcal{H}^j(X;\F))\cong H^i(G; H^j(X;\F)).
\]
Here $H^i(\B G; \mathcal{H}^j(X;\F))$ denotes the cohomology with local coefficients induced by the action of the fundamental group of the base space $\pi_1(\B G)\cong G$ on the cohomology of the fiber $H^j(X;\F)$.
On the other hand $H^i(G; H^j(X;\F))$ denotes the cohomology of the group $G$ with coefficients in the $G$-module $H^j(X;\F)$. Those two cohomologies are isomorphic by definition.
For a definition of cohomology with local coefficients consult for example \cite[Sec.\,3.H]{Hatcher2002}.
The Fadell--Husseini index of a path-connected $G$-space $X$ can be evaluated using the equality
\begin{equation}
	\label{eq : index from ss}
	\ind_G (X;\F) = \ker \big(E_2^{*,0}(\E G\times_G X)\longrightarrow E_{\infty}^{*,0}( \E G\times_G X)\big).
\end{equation}

\medskip
In this paper we compute the Fadell--Hussini index of Grassmann manifolds with respect to the introduced $\Z/2$ action and prove the following two theorems.
\begin{theorem}
	\label{th : main - 01}
	Let $n\geq 1$ be an integer, and let $a\geq 0$ and $b\geq 0$ be the unique integers such that $n=2^a(2b+1)$.
	Then
	\[
	\ind_{\Z/2}(\widetilde{G}_{n}(\R^{2n});\F_2)=\begin{cases}
		\langle t^3\rangle, & a=1,\\
		\langle t^{2^{a+1}}\rangle, & a=0 \text{ or } a\geq 2.
	\end{cases}
	\]
\end{theorem}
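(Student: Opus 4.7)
The $\Z/2$-action on $\widetilde{G}_n(\R^{2n})$ is free, since no subspace equals its own orthogonal complement, so the Borel construction $\E\Z/2 \times_{\Z/2} \widetilde{G}_n(\R^{2n})$ is homotopy equivalent to the quotient manifold $Q_n := \widetilde{G}_n(\R^{2n})/(\Z/2)$, with covering map $\pi\colon \widetilde{G}_n(\R^{2n}) \to Q_n$. The structure map to $\B\Z/2$ classifies the associated double cover, corresponding to a line bundle $\lambda$ on $Q_n$. By \eqref{eq : index from ss}, the index equals the kernel of $\F_2[t] \to H^*(Q_n;\F_2)$, $t \mapsto w_1(\lambda)$, so everything reduces to finding the smallest $m$ with $w_1(\lambda)^m = 0$ in $H^*(Q_n;\F_2)$.

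For the upper bound I would use the tautological $n$-plane bundle $\gamma$ over $\widetilde{G}_n(\R^{2n})$ together with its orthogonal complement $\gamma^\perp = \omega^*\gamma$. Their direct sum $\gamma \oplus \gamma^\perp$ is canonically the trivial bundle $\underline{\R^{2n}}$, and $\Z/2$ permutes the summands; this equivariant rank-$2n$ bundle descends to the wreath-product bundle $\gamma \wr \Z/2$ on $Q_n$ alluded to in the abstract. The paper's wreath-product Stiefel--Whitney formula expresses $w(\gamma \wr \Z/2) \in H^*(Q_n;\F_2)$ as an explicit polynomial in $w_1(\lambda)$ and in characteristic classes of an auxiliary rank-$n$ bundle on $Q_n$. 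Because $\pi^*(\gamma \wr \Z/2) = \gamma \oplus \gamma^\perp$ is trivial, every positive-degree component of $w(\gamma \wr \Z/2)$ lies in $\ker \pi^*$, and matching this against the wreath-product formula produces relations whose leading coefficients are binomial coefficients $\binom{n}{j} \pmod{2}$. By Lucas' theorem, with $n = 2^a(2b+1)$, the smallest $j = 2^s$ for which $\binom{n}{j}$ is odd is $j = 2^a$, yielding a relation of the shape $w_1(\lambda)^{2^{a+1}} = 0$ and hence $\langle t^{2^{a+1}}\rangle \subseteq \ind_{\Z/2}(\widetilde{G}_n(\R^{2n});\F_2)$ in the generic case.

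For the matching lower bound $t^{m-1} \notin \ind$, with $m$ the claimed value, I would construct an equivariant map $f\colon S^{m-1} \to \widetilde{G}_n(\R^{2n})$ from the antipodal sphere. Since $\ind_{\Z/2}(S^{m-1};\F_2) = \langle t^m\rangle$, its existence forces $\ind_{\Z/2}(\widetilde{G}_n(\R^{2n});\F_2) \subseteq \langle t^m\rangle$. Such a family of oriented $n$-planes can be produced from a tensor decomposition $\R^{2n} \cong \R^{2^{a+1}} \otimes \R^{n/2^a}$ with $\Z/2$ acting by sign on a standard $(m-1)$-sphere of orthogonal involutions of the first tensor factor; the corresponding $\pm 1$-eigenspace decomposition parametrizes oriented $n$-planes exchanged by $\omega$.

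The principal obstacle is the anomalous case $a = 1$, where the generic calculation only predicts $\langle t^4\rangle$ but the correct answer is $\langle t^3\rangle$. This improvement comes from a secondary term in the wreath-product formula that becomes active precisely when $n \equiv 2 \pmod{4}$, effectively absorbing one factor of $w_1(\lambda)$ into the leading relation; detecting it requires tracking the subleading terms of the Stiefel--Whitney computation rather than just the leading binomial coefficient, and matching it needs a specific equivariant map $S^2 \to \widetilde{G}_n(\R^{2n})$ tuned to this congruence class. This delicate bookkeeping, together with verifying that no analogous improvement occurs for $a \neq 1$, is the core of the argument.
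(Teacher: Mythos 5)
Your framing is correct: the action is free, so the index is the kernel of $\F_2[t]\to H^*(Q_n;\F_2)$ sending $t$ to $w_1(\lambda)$, and the upper bound on $m$ (that is, $\langle t^{2^{a+1}}\rangle\subseteq\ind$) does come from comparing the wreath-square Stiefel--Whitney formula against the triviality of $\gamma\oplus\gamma^\perp$ and using Lucas' theorem. This is in the spirit of the paper, though you compress into one sentence what is in fact a rather intricate chain of relations (the paper's Proposition~\ref{prop : calculus} treats some eight separate congruence cases, and $t^{2^{a+1}}=0$ in Corollary~\ref{cor} only emerges after chaining together the cases $k=2^{a+1}-2$ and $k=2^{a+1}-1$, not just from the single coefficient $\binom{n}{2^a}$).

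The genuine gap is in your lower bound. You propose to show $t^{2^{a+1}-1}\notin\ind$ by constructing a $\Z/2$-map $S^{2^{a+1}-1}\to\widetilde{G}_n(\R^{2n})$ from a tensor decomposition $\R^{2n}\cong\R^{2^{a+1}}\otimes\R^{2b+1}$ and a linear $(2^{a+1})$-dimensional family of symmetric orthogonal involutions of $\R^{2^{a+1}}$. Such a linear family is precisely a $\mathrm{Cl}_{2^{a+1},0}$-module structure on $\R^{2^{a+1}}$, and this does not exist once $a\geq2$: the minimal real module of $\mathrm{Cl}_{m,0}$ has dimension $1,1,2,4,8,8,16,16,16,\ldots$ for $m=0,1,2,\ldots$, so already for $m=8$ one needs dimension $16>2^3=8$, and the defect grows. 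Your construction works only for $a=0$ ($m=2$ on $\R^2$) and for the anomalous case $a=1$ ($m=3$ on $\R^4$, via $\mathrm{Cl}_{3,0}\cong M_2(\C)$). In the generic case $a\geq2$ you would have to find such a map by other means, and it is not clear one exists; the paper does not attempt this. Instead the paper's lower bound is purely algebraic: it enlarges the kernel ideal by throwing in all $Q_{i,j}$ with $i+j\leq 2^{a+1}-2$, shows in the resulting quotient that the relations from Proposition~\ref{prop : calculus} force $t^{2^{a+1}-1}=Q_{2^a-1,2^a}$, and observes that no relation kills this element. You will want to replace the sphere-map construction with an argument of this kind (or otherwise show directly that $t^{2^{a+1}-1}$ is not in the image of the ideal generated by the wreath-square Stiefel--Whitney classes). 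The $a=1$ improvement to $\langle t^3\rangle$, which you correctly flag as coming from a subleading term, likewise falls out of the same algebraic relation $P_1=t^2+Q_{0,2}$, with no separate equivariant map needed.
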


\begin{theorem}
	\label{th : main - 02}
	Let $n\geq 1$ be an integer, and let $a\geq 0$ and $b\geq 0$ be the unique integers such that $n=2^a(2b+1)$.
	Then
	\[\ind_{\Z/2}(G_{n}(\R^{2n});\F_2)=\langle t^{2^{a+1}}\rangle .\]
\end{theorem}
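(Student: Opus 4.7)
The plan is to analyze the Borel fibration
\[
G_n(\R^{2n})\longrightarrow E\Z/2\times_{\Z/2} G_n(\R^{2n})\longrightarrow B\Z/2
\]
by exploiting its naturality with respect to the $\Z/2$-equivariant classifying map $G_n(\R^{2n})\to BO(n)\times BO(n)$, $V\mapsto(V,V^\perp)$. Applying the Borel construction yields a map of fibrations whose target is $BO(n)\times BO(n)\to B(O(n)\wr\Z/2)\to B\Z/2$. The key observation is that the canonical rank-$2n$ bundle over $B(O(n)\wr\Z/2)$ pulls back along the middle vertical map to the Borel construction of $\gamma_n\oplus\gamma_n^\perp$, which is trivial on $G_n(\R^{2n})$; consequently every Stiefel--Whitney class $\tilde w_i\in H^i(B(O(n)\wr \Z/2);\F_2)$ of the wreath-product bundle must vanish in $H^*(E\Z/2\times_{\Z/2} G_n(\R^{2n});\F_2)$.

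The main step will be to use the novel wreath-product Stiefel--Whitney class formula announced in the abstract to translate the system $\tilde w_i=0$ into explicit polynomial relations among $t\in H^1(B\Z/2;\F_2)$ and the images of the symmetric classes $s_j=w_j+\bar w_j$ and $q_j=w_j\bar w_j$ in the Borel cohomology. The hard part will be extracting from these relations the smallest power of $t$ that must vanish. The answer $2^{a+1}$ should arise through careful bookkeeping governed by the $2$-adic valuation $v_2(n)=a$: tractable manipulations of elementary symmetric functions modulo $2$, combined with the $H^*(\Z/2;H^*(G_n(\R^{2n});\F_2))$-module structure of the Serre spectral sequence's $E_2$-page, should produce $t^{2^{a+1}}=0$ in $H^*(E\Z/2\times_{\Z/2} G_n(\R^{2n});\F_2)$, yielding the containment $\langle t^{2^{a+1}}\rangle\subseteq\ind_{\Z/2}(G_n(\R^{2n});\F_2)$.

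For the reverse containment I apply the $\Z/2$-equivariant double cover $c\colon\widetilde{G}_n(\R^{2n})\to G_n(\R^{2n})$ and the monotonicity of the Fadell--Husseini index to obtain $\ind_{\Z/2}(G_n(\R^{2n});\F_2)\subseteq\ind_{\Z/2}(\widetilde{G}_n(\R^{2n});\F_2)$. When $a=0$ or $a\geq 2$ this closes the argument via Theorem~\ref{th : main - 01}. The case $a=1$ is the main obstacle, since Theorem~\ref{th : main - 01} there gives only $\ind_{\Z/2}(G_n)\subseteq\langle t^3\rangle$ while the desired answer is $\langle t^4\rangle$. The wreath-product relations above already force $t^4=0$, so it remains to show $t^3\neq 0$ in $H^*(E\Z/2\times_{\Z/2} G_n(\R^{2n});\F_2)$. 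For $n=2$ this can be verified directly via the identification $E\Z/2\times_{\Z/2} G_2(\R^4)\simeq\RP^2\times\RP^2$, under which $t=a+b$ in the standard degree-one generators, so that $t^3=a^2b+ab^2\neq 0$ while $t^4=(a+b)^4=0$. For general $n=2(2b+1)$ the plan is to propagate this nonvanishing either by an explicit analysis of the relevant differentials in the Serre spectral sequence or by constructing a suitable $\Z/2$-equivariant auxiliary map on which $t^3$ pulls back to a nonzero class.
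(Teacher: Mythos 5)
Your plan follows essentially the same route as Sections 3, 5, and 6 of the paper: identify the wreath group $O(n)\wr\Z/2$, relate $H^*(\E\Z/2\times_{\Z/2}G_n(\R^{2n});\F_2)$ to $H^*(\B(O(n)\wr\Z/2);\F_2)$, and use the wreath-product Stiefel--Whitney formula together with Lucas's theorem to force $t^{2^{a+1}}=0$. There are, though, a few genuine variations worth recording. For the lower bound you work with the $\Z/2$-equivariant classifying map $V\mapsto(V,V^{\perp})$ and observe that $\gamma^n\oplus\nu^n$ is trivial, so that the wreath bundle's classes die in Borel cohomology. The paper instead exhibits $G_n(\R^{2n})/(\Z/2)\cong O(2n)/U$ and runs a transgression argument to show these classes actually generate $\ker\pi_1^*$; your one-sided containment is all you need for this direction, so this is a genuine (small) streamlining. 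For sharpness you invoke Theorem~\ref{th : main - 01} through the covering map when $a=0$ or $a\geq 2$; the paper instead re-runs a direct computation in $H^*(\B U;\F_2)$ parallel to the proof of Theorem~\ref{th : main - 01}, so your route trades self-containment for economy. Finally, your identification $\E\Z/2\times_{\Z/2}G_2(\R^4)\simeq\RP^2\times\RP^2$ with $t=a+b$ (hence $t^3=a^2b+ab^2\neq 0$, $t^4=0$) is a nice elementary verification not in the paper, which computes $\ind_{\Z/2}(G_2(\R^4);\F_2)$ twice by other means (Proposition \ref{prop : index for n=1 and n=2} and Example \ref{ex : n=2}); the propagation to all $n\equiv 2\pmod 4$ is exactly the map of Proposition \ref{prop : Z/2-map between Grassmannians}.

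The gap is that the central step --- that $t^{2^{a+1}}$ actually lies in the ideal generated by the wreath bundle's Stiefel--Whitney classes --- is left at the level of ``careful bookkeeping governed by $v_2(n)=a$.'' In the paper this is Proposition~\ref{prop : calculus} and Corollary~\ref{cor}: a delicate induction on $k\leq 2^{a+1}-1$ that splits into seven cases according to the 2-adic shape of $k$ relative to $a$, uses Lucas's theorem coefficient by coefficient, and crucially exploits the relation $t\cdot Q(\cdot)=0$ in $H^*(\B U;\F_2)$ to kill most terms. This is not a routine elementary-symmetric-function calculation, and until you have carried it out the key containment $\langle t^{2^{a+1}}\rangle\subseteq\ind_{\Z/2}(G_n(\R^{2n});\F_2)$ is unproved. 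The rest of your blueprint is sound.
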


\medskip
Theorems \ref{th : main - 01} and \ref{th : main - 02} have interesting geometrical consequences.
One of them is the following result.
 
\begin{corollary}
\label{cor : geometric}
Let $a\geq 0$ and $b\geq 0$ be integers, $n=2^a (2 b+1)$ and $k=2^{a+1}-1$.
Let $\alpha_1,\ldots,\alpha_k$ be real valued functions on the space of $n$-dimesional convex bodies in $\R^{2n}$ that are continuous with respect to the Hausdorff metric.
Then for every proper convex body $C\subseteq\R^{2n}$ there exists an $n$-dimensional
subspace $V\subseteq\R^{2n}$ such that
\begin{equation}
	\label{eq : claim of corollary}
	\alpha_i (p_V(C))=\alpha_i (p_{V^\perp} (C))
	\qquad\qquad\text{for all }1\leq i\leq k,
\end{equation}
where $p_V$ and $p_{V^\perp}$ are orthogonal projections onto $V$ and its orthogonal complement $V^{\perp}$.
\end{corollary}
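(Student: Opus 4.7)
The plan is the standard configuration-space/test-map paradigm: use Theorem \ref{th : main - 02} on the index of $G_n(\R^{2n})$ to obstruct a certain equivariant map and deduce the geometric conclusion.

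First I would define the test map
\[
f\colon G_n(\R^{2n})\longrightarrow \R^k,
\qquad
f(V)=\bigl(\alpha_1(p_V(C))-\alpha_1(p_{V^\perp}(C)),\ldots,\alpha_k(p_V(C))-\alpha_k(p_{V^\perp}(C))\bigr).
\]
Since $C$ is proper and $V$ is $n$-dimensional in $\R^{2n}$, each $p_V(C)$ is an $n$-dimensional convex body in $\R^{2n}$; the assignment $V\mapsto p_V(C)$ is continuous with respect to the Hausdorff metric, and combined with the continuity of each $\alpha_i$ this makes $f$ continuous. Equipping $\R^k$ with the antipodal $\Z/2$-action, the defining formula shows that $f(V^\perp)=-f(V)$, i.e. $f$ is a $\Z/2$-equivariant map. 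A common zero of the coordinate functions is exactly a subspace $V$ satisfying \eqref{eq : claim of corollary}, so it suffices to prove that $f$ must vanish somewhere.

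Second, I argue by contradiction: if $f$ has no zero, then $V\mapsto f(V)/\|f(V)\|$ provides a continuous $\Z/2$-equivariant map
\[
\bar f\colon G_n(\R^{2n})\longrightarrow S^{k-1},
\]
where $S^{k-1}$ carries the antipodal action. By the monotonicity property of the Fadell--Husseini index recalled in the introduction,
\[
\ind_{\Z/2}\bigl(G_n(\R^{2n});\F_2\bigr)\supseteq \ind_{\Z/2}(S^{k-1};\F_2)=\langle t^{k}\rangle=\langle t^{2^{a+1}-1}\rangle,
\]
since the antipodal action on $S^{k-1}$ is free. However, Theorem \ref{th : main - 02} asserts $\ind_{\Z/2}(G_n(\R^{2n});\F_2)=\langle t^{2^{a+1}}\rangle$, and $t^{2^{a+1}-1}$ is not divisible by $t^{2^{a+1}}$ in $\F_2[t]$; hence the required inclusion fails. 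This contradiction forces $f$ to have a zero, which produces the desired subspace $V$.

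No serious obstacle is anticipated; the heavy lifting is entirely contained in Theorem \ref{th : main - 02}. The only small points to verify carefully are (i) that the $\Z/2$-action on $G_n(\R^{2n})$ is free, which follows from $V\cap V^\perp=\{0\}$ and $\dim V=n\geq 1$, so $V\neq V^\perp$; and (ii) that $p_V(C)$ is a genuine $n$-dimensional convex body, which uses that $C$ is proper, i.e. has non-empty interior in $\R^{2n}$. With these in hand, the above index comparison is immediate.
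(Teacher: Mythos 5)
Your proof is correct and follows essentially the same configuration-space/test-map argument as the paper: the same test map, the same reduction to a $\Z/2$-map to $S^{k-1}$ (normalization vs.\ the paper's radial retraction is cosmetic), and the same index comparison against Theorem~\ref{th : main - 02}. The small sanity checks you add (freeness of the action, $p_V(C)$ being a proper $n$-dimensional body) are fine but not needed for the index argument, which only uses the monotonicity property of $\ind_{\Z/2}$.
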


\begin{proof}
Let $\alpha_1,\ldots,\alpha_k$ be continuous real valued functions on the space of $n$-dimesional convex bodies in $\R^{2n}$, and let $C\subseteq\R^{2n}$ be a convex body.
Let $f_C\colon G_{n}(\R^{2n})\longrightarrow \R^k$ be  a continuous map defined
by
\[
f_C(V):=\big( \alpha_1
(p_V(C))-\alpha_1 (p_{V^\perp} (C)), \ldots, \alpha_k
(p_V(C))-\alpha_k (p_{V^\perp} (C))\big)
\]
for $V\in G_{n}(\R^{2n})$.
If the $\Z/2$ action on the Euclidean space $\R^k$ is assumed to be antipodal, and the $\Z/2$ action on $G_{n}(\R^{2n})$ is given by $\omega\cdot V=V^{\perp}$, then $f_C$ is a $\Z/2$-map.

If the claim \eqref{eq : claim of corollary} does not hold then there exists a convex body $C$ such that $0\not\in\mathrm{im} f_C$.
Thus the map $f_C$ factors through $\R^k \setminus \{0\}$  as follows
\[
\xymatrix{
G_{n}(\R^{2n})\ar[rr]^-{f_C}\ar[dr]_{\widetilde{f}_C} && \R^k\\
& \R^k \setminus \{0\}\ar[ur]_{i}.&
}
\]
There exists a composition of $\Z/2$-maps
\[
\xymatrix{
G_{n}(\R^{2n})\ar[r]^-{\widetilde{f}_C} & \R^k \setminus \{0\}\ar[r]^-{r} & S^{k-1},
}
\]
where $r$ is the $\Z_2$-invariant radial retraction onto the sphere $S^{k-1}$ with the antipodal action.
Consequently, by the basic property of the Fadell--Husseini index
\[
\langle t^{2^{a+1}}\rangle=\ind_{\Z/2}({G}_{n}(\R^{2n});\F_2)\supseteq \ind_{\Z/2}(S^{k-1};\F_2)= \langle
 t^{k}\rangle= \langle
 t^{2^{a+1}-1}\rangle.
\]
This is a contradiction, and therefore the claim \eqref{eq : claim of corollary} holds.

\end{proof}

Geometrically meaningful examples for functions $\alpha_1,\ldots,\alpha_k$  may be given by quermassintegrals (intrinsic volumes), that is average $k$-dimensional volumes of orthogonal $k$-dimensional projections of a body $C\subset\mathbb R^{2n}$, for $1\leq k\leq n$.
Another meaningful function is the radius of the minimal containing ball of $C$.
These functions have an additional property that they are rotationally and translationally invariant.

\medskip
A direct consequence of the proof of Corollary \ref{cor : geometric} is the following geometrical result.

\begin{corollary}
\label{cor : geometric-02}
Let $a\geq 0$ and $b\geq 0$ be integers, $n=2^a (2 b+1)$ and $k=2^{a+1}-1$.
Let $\alpha_1,\ldots,\alpha_k$ be real valued functions on the space of $n$-dimesional convex bodies in $\R^{2n}$ that are continuous with respect to the Hausdorff metric.
Then for every proper convex body $C\subseteq\R^{2n}$ containing the origin in its interior there exists an $n$-dimensional
subspace $V\subseteq\R^{2n}$ such that
\[
	\alpha_i (C\cap V)=\alpha_i (C\cap V^\perp)
	\qquad\qquad\text{for all }1\leq i\leq k,
\]
where $V^{\perp}$ is the orthogonal complement of $V$.
\end{corollary}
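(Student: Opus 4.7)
The plan is to mimic the proof of Corollary \ref{cor : geometric} with linear sections in place of orthogonal projections. First I would define the map
\[
g_C\colon G_{n}(\R^{2n})\longrightarrow\R^k,\qquad g_C(V):=\big(\alpha_1(C\cap V)-\alpha_1(C\cap V^\perp),\,\ldots,\,\alpha_k(C\cap V)-\alpha_k(C\cap V^\perp)\big),
\]
where $\R^k$ carries the antipodal $\Z/2$-action. The hypothesis that $0$ lies in the interior of $C$ guarantees that for every $V\in G_n(\R^{2n})$ both $C\cap V$ and $C\cap V^{\perp}$ are $n$-dimensional convex bodies, so each $\alpha_i$ evaluates on them; with $\omega\cdot V=V^{\perp}$ the map $g_C$ is then manifestly $\Z/2$-equivariant.

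Once $g_C$ is shown to be continuous, the rest of the argument proceeds in exactly the same way as in Corollary \ref{cor : geometric}. If the conclusion were to fail for some convex body $C$, then $0\notin\im g_C$, so the $\Z/2$-invariant radial retraction would produce a $\Z/2$-map $G_n(\R^{2n})\to S^{k-1}$. Monotonicity of the Fadell--Husseini index combined with Theorem \ref{th : main - 02} would then force
\[
\langle t^{2^{a+1}}\rangle=\ind_{\Z/2}(G_n(\R^{2n});\F_2)\supseteq\ind_{\Z/2}(S^{k-1};\F_2)=\langle t^{2^{a+1}-1}\rangle,
\]
which is impossible, and the contradiction yields the claim.

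The only genuinely new step, and the one I expect to require the most care, is the continuity of $g_C$, or equivalently of the assignment $V\mapsto C\cap V$ with respect to the Hausdorff metric on convex bodies. Here I would use the interior-point hypothesis as follows: fix $r>0$ such that the ball of radius $r$ around the origin is contained in $C$, and let $V_j\to V$ in $G_n(\R^{2n})$. For $x\in C\cap V$ and any $\varepsilon>0$, the point $(1-\varepsilon)x$ admits a neighborhood of radius $\varepsilon r$ contained in $C$ (by convexity with the ball around $0$), so its orthogonal projection onto $V_j$ lies in $C\cap V_j$ as soon as $V_j$ is sufficiently close to $V$. Letting $\varepsilon\to 0$, and invoking the symmetric argument with the roles of $V$ and $V_j$ exchanged together with the bounded diameter of $C$, gives $d_{\mathrm{Hausdorff}}(C\cap V_j,\,C\cap V)\to 0$ and hence continuity of $g_C$, completing the proof.
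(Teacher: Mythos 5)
Your argument is exactly what the paper has in mind: the authors present Corollary~\ref{cor : geometric-02} with no separate proof, stating only that it is a direct consequence of the proof of Corollary~\ref{cor : geometric} with intersections $C\cap V$ replacing the projections $p_V(C)$, and your $\Z/2$-map $g_C$ followed by the radial retraction to $S^{k-1}$ and the index comparison $\langle t^{2^{a+1}}\rangle \supseteq \langle t^{2^{a+1}-1}\rangle$ is precisely that argument. The only part you supply beyond the paper is the explicit verification that $V\mapsto C\cap V$ is Hausdorff-continuous (hence $g_C$ continuous), and that the interior-point hypothesis makes every $C\cap V$ a genuine $n$-dimensional convex body in $V$; this is correct as sketched, and is exactly the detail the paper leaves implicit.
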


For example, if a proper convex body $C\subseteq \R^4$ with the origin in its interior is given, then there exist orthogonal $2$-dimensional planes through the origin $V$ and $V^{\perp}$ such that the intersections $C\cap V$ and $C\cap V^{\perp}$ have equal areas, equal perimeters, and equal radii of the smallest containing disks.

\medskip
A further geometrical application of the results of Theorem  \ref{th : main - 02} is the following corollary.

\begin{corollary}
Let $a\geq 0$ be an integer, $n=2^a$, and $X\subseteq \R^{2n}$ a finite set of points. 
There exist two projections $P\colon \R^{2n}\longrightarrow \R^{2n}$ and $Q\colon \R^{2n}\longrightarrow \R^{2n}$ of the Euclidean space $\R^{2n}$ onto mutually orthogonal $n$-dimensional subspaces of $\mathbb R^{2n}$ so that the two inertia tensors 
\[
I_P = \sum_{x\in X} Px\otimes Px\qquad\text{and}\qquad I_Q = \sum_{x\in X} Qx\otimes Qx
\]
are similar, that is transformable one to another by an orthogonal transformation.
\end{corollary}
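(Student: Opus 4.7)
The plan is to adapt the argument of Corollary \ref{cor : geometric}, applying Theorem \ref{th : main - 02} with $n=2^a$ (so that $b=0$ and the index equals $\langle t^{2n}\rangle$) to a test map that detects failure of orthogonal similarity of the two inertia tensors, rather than the mere equality of scalar invariants used there.

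First I would set up the test map. For $V\in G_{n}(\R^{2n})$ let $P_V$ denote orthogonal projection onto $V$, and set
\[
I_V:=\sum_{x\in X}(P_Vx)\otimes(P_Vx),
\]
a symmetric positive semidefinite operator on $\R^{2n}$ of rank at most $n$. The $\Z/2$-action $V\mapsto V^{\perp}$ exchanges $I_V$ and $I_{V^\perp}$. Define
\[
f\colon G_{n}(\R^{2n})\longrightarrow\R^{n},
\qquad
f(V):=\bigl(\operatorname{tr}(I_V^{j})-\operatorname{tr}(I_{V^\perp}^{j})\bigr)_{1\leq j\leq n}.
\]
Each component is polynomial in $V$ and changes sign under $V\leftrightarrow V^\perp$, so $f$ is continuous and $\Z/2$-equivariant with respect to the antipodal action on $\R^{n}$.

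Next I would verify that any zero of $f$ produces the desired subspace. Two symmetric $2n\times 2n$ matrices are $\OO(2n)$-conjugate if and only if they share a characteristic polynomial. Since $I_V$ and $I_{V^\perp}$ both have rank at most $n$, their characteristic polynomials take the form $\lambda^{2n}+c_1\lambda^{2n-1}+\cdots+c_n\lambda^{n}$ with the remaining coefficients automatically zero; by Newton's identities the coefficients $c_1,\dots,c_n$ are determined by the power sums $\operatorname{tr}(I_V^{j})$ for $j=1,\dots,n$. Hence $f(V)=0$ forces equal characteristic polynomials, and thus $\OO(2n)$-similarity of the two inertia tensors.

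Finally, assume for contradiction that $f$ is nowhere zero. Composing $f$ with the $\Z/2$-invariant radial retraction onto $S^{n-1}$ yields a $\Z/2$-equivariant map $G_{n}(\R^{2n})\longrightarrow S^{n-1}$ for the antipodal action on the target. Monotonicity of the Fadell--Husseini index combined with Theorem \ref{th : main - 02} then forces
\[
\langle t^{2n}\rangle=\ind_{\Z/2}(G_{n}(\R^{2n});\F_2)\supseteq\ind_{\Z/2}(S^{n-1};\F_2)=\langle t^{n}\rangle,
\]
which is false since $n<2n$. This contradiction yields the desired $V$. The only nontrivial point is the similarity reduction: showing that $n$ trace identities suffice to witness $\OO(2n)$-similarity, which relies on the rank-at-most-$n$ bound together with Newton's identities; the topological mechanism is a direct transcription of the argument in Corollary \ref{cor : geometric}.
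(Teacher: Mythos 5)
Your proof is correct and follows essentially the same route as the paper's: both reduce orthogonal similarity of $I_V$ and $I_{V^\perp}$ to equality of characteristic polynomials, observe that the rank-at-most-$n$ bound leaves only $n$ free invariants, package these into a $\Z/2$-equivariant test map $G_n(\R^{2n})\to\R^n$, and invoke Theorem~\ref{th : main - 02} together with monotonicity of the index to force a zero. The only cosmetic difference is that you use the power traces $\operatorname{tr}(I_V^j)$ as coordinates (converted to characteristic-polynomial coefficients via Newton's identities over $\R$), whereas the paper works directly with the coefficients $a(P)_{2n-1},\ldots,a(P)_n$; both parametrize the same data.
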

\begin{proof}
The configuration space of all pairs of projectors $(P,Q)$ equipped with the $\Z/2$-action given by $(P,Q)\longmapsto (Q,P)$ can be identified with the Grassmann manifold $G_n(\mathbb R^{2n})$ where the corresponding $\Z/2$-action is $V\longmapsto V^{\perp}$. 

The tensors (the quadratic form) $I_P$ and $I_Q$ can be considered as a matrix which has the characteristic polynomials $\det (I_P - \lambda I_{2n})$ and $\det (I_Q - \lambda I_{2n})$ having at least $n$ zero roots (here $I_{2n}$ denotes the unit $2n\times 2n$ matrix).
Indeed, $I_P$ and $I_Q$, seen as matrices, have the kernels containing the $n$-dimensional subspaces $\ker P$ and $\ker Q$, respectively. 
Since every quadratic form can be taken to a diagonal quadratic form by an orthogonal transformation $I_P\longmapsto O^T I_P O$ without changing the characteristic polynomial, in order to show $I_P$ and $I_Q$ are transformable one to another it is sufficient to show that their characteristic polynomials are equal.

In both characteristic polynomials we only have $n$ nonzero coefficients at $\lambda^{2n-1},\ldots, \lambda^n$, call them $a(P)_{2n-1}, \ldots, a(P)_n$ for $I_P$ and $a(Q)_{2n-1}, \ldots, a(Q)_n$ for $I_Q$; they depend continuously on the pair $(P,Q)$. 
Since we want to find a pair  $(P,Q)$ with the property that associated characteristic polynomials coincide we consider the following $\Z/2$-map
\[
G_n(\mathbb R^{2n}) \longrightarrow \R^n,\quad (P,Q)\longmapsto \left( a(P)_{2n-1} - a(Q)_{2n-1},\ldots, a(P)_n - a(Q)_n \right).
\]
Like in the previous corollary, from the Grassmannian index estimate, this map must have a zero, which completes the proof.
\end{proof}

\medskip
This paper builds on the ideas and methods presented in \cite{Karasev2010} and presents the journal version of that preprint.

\medskip
\noindent
{\bf Acknowledgments} We thank Oleg Musin for drawing our attention to the problem of understanding the index of the orthogonal complement action on Grassmann manifolds.
We are grateful to the referees for helpful comments and suggestions.

%%%%%%%%%%%%%%%%%%%%%%%%%%%%%%%%%%%%%%%%%%%%%%%%%%%%%%%%%%%%%%%%%%%%%%%%%%%%%%%%%%%%%
\section{First estimates of the index}
\label{sec : estimates of the index}
%%%%%%%%%%%%%%%%%%%%%%%%%%%%%%%%%%%%%%%%%%%%%%%%%%%%%%%%%%%%%%%%%%%%%%%%%%%%%%%%%%%%%

In this section we make initial observations about the $\Z/2$ actions on the Grassmann manifolds $G_{n}(\R^{2n})$ and $\widetilde{G}_{n}(\R^{2n})$ and obtain some estimates of the corresponding indices.
Furthermore, we discuss the action of $\Z/2$ on the cohomology of the Grassmann manifold $G_{n}(\R^{2n})$.

\medskip
The first elementary observation we make is that the covering map between the Grassmann manifolds is a $\Z/2$-map.

\begin{proposition}
	\label{prop : covering map a Z/2-map}
	Let $n\geq 1$ be an integer.
	The covering map
	$
	c\colon \widetilde{G}_{n}(\R^{2n})\longrightarrow G_{n}(\R^{2n})
	$
	is a $\Z/2$-map, and consequently
	\[
	\ind_{\Z/2}(\widetilde{G}_{n}(\R^{2n});\F_2)\supseteq \ind_{\Z/2}(G_{n}(\R^{2n});\F_2).
	\]
\end{proposition}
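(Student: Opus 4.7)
The plan is short because the statement is essentially tautological: the equivariance of $c$ is built into the way the two $\Z/2$-actions are defined, and then the index inclusion is an immediate application of the monotonicity property of the Fadell--Husseini index recalled in the introduction.

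First I would unwind the definitions of the two $\Z/2$-actions. On $G_n(\R^{2n})$ the action is $\omega\cdot V=V^{\perp}$, and on $\widetilde{G}_n(\R^{2n})$ the action is the same assignment on the level of underlying linear subspaces, together with some consistent choice of orientation on $V^{\perp}$. The map $c$ is precisely the double cover that forgets orientation. Hence for any oriented $n$-plane $\widetilde{V}\in\widetilde{G}_n(\R^{2n})$ with underlying subspace $V$, one has
\[
c(\omega\cdot\widetilde{V})\;=\;V^{\perp}\;=\;\omega\cdot V\;=\;\omega\cdot c(\widetilde{V}),
\]
so $c$ is a $\Z/2$-map. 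In the special case $n$ even this is also recorded by the commutative square displayed in the introduction, where the block-swap action on $\SO(2n)\hookrightarrow\OO(2n)$ descends via $p_1$ and $p_2$ and identifies $c\circ p_2$ with $p_1\circ i$.

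Second, having produced a continuous $\Z/2$-map $\widetilde{G}_n(\R^{2n})\longrightarrow G_n(\R^{2n})$, I would invoke the basic monotonicity property of the Fadell--Husseini index quoted from \cite{Fadell1988} in the introduction: the existence of such a map forces the containment
\[
\ind_{\Z/2}(\widetilde{G}_n(\R^{2n});\F_2)\;\supseteq\;\ind_{\Z/2}(G_n(\R^{2n});\F_2),
\]
as claimed. Since the argument is a tautology, I do not anticipate any real obstacle; the only point that deserves a sentence of care is to confirm that the ``appropriate'' orientation on $V^{\perp}$ has been fixed so that the resulting assignment on $\widetilde{G}_n(\R^{2n})$ is genuinely an involution, but this is a matter of the setup in the introduction rather than of the equivariance statement proper.
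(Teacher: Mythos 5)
Your proposal is correct and matches the paper's treatment: the paper states the proposition without an explicit proof, remarking only that $c$ is the orientation-forgetting map, since the equivariance is immediate from the definitions and the index inclusion is the standard monotonicity property of the Fadell--Husseini index. Your unwinding of the definitions and the closing caveat about fixing a consistent orientation convention is exactly the content the paper leaves implicit.
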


\noindent
The covering map $c$ is a map that forgets orientation and is defined in general $c\colon \widetilde{G}_{n}(\R^{n+k})\longrightarrow G_{n}(\R^{n+k})$ for any $n\geq 1$ and $k\geq 1$, including the case when $n+k=\infty$.

\medskip
Next we construct $\Z/2$-maps between Grassmann manifolds of different dimensions.

\begin{proposition}
	\label{prop : Z/2-map between Grassmannians}
	Let $m\geq 1$ be an integer, and let $n$ be a multiple of $m$.
	There exist $\Z/2$-maps
	\[
	g\colon G_{m}(\R^{2m})\longrightarrow G_{n}(\R^{2n})
	\qquad\text{and}\qquad
	\widetilde{g}\colon \widetilde{G}_{m}(\R^{2m})\longrightarrow \widetilde{G}_{n}(\R^{2n}),
	\]
	and consequently
	\[
	\ind_{\Z/2}(G_{m}(\R^{2m});\F_2)\supseteq \ind_{\Z/2}(G_{n}(\R^{2n});\F_2)
	, \quad
	\ind_{\Z/2}(\widetilde{G}_{m}(\R^{2m});\F_2)\supseteq \ind_{\Z/2}(\widetilde{G}_{n}(\R^{2n});\F_2).
	\]
\end{proposition}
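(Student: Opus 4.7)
The plan is to construct $g$ by a diagonal embedding, then to lift the construction to the oriented setting, after which the two index inclusions follow from the monotonicity property of the Fadell--Husseini index recalled in the introduction.

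Setting $r := n/m$ and identifying $\R^{2n} \cong (\R^{2m})^{\oplus r}$, I define
\[
g(V) := V \oplus V \oplus \cdots \oplus V \subseteq (\R^{2m})^{\oplus r},
\]
with one copy of $V$ placed in each of the $r$ summands, so that $g(V)$ is $n$-dimensional. The assignment is continuous in $V$, and since orthogonal complements distribute over direct sums one has
\[
g(V)^\perp = (V^\perp)^{\oplus r} = g(V^\perp).
\]
Hence $g$ is a $\Z/2$-map for the orthogonal complement actions, and the first index containment follows at once.

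For $\widetilde g$ the underlying subspace is the same; one only needs to orient $g(V)$ compatibly. The natural choice is to take a positively oriented basis $(v_1,\ldots,v_m)$ of $V$ and place it, one ordered copy per summand, to obtain a positively oriented basis of $g(V)$. The nontrivial point is to check that, under the "appropriately oriented orthogonal complement" convention used to define the $\Z/2$-action on $\widetilde G_n(\R^{2n})$, one has $\widetilde g(V^\perp) = \widetilde g(V)^\perp$ also as oriented subspaces. This reduces to a single sign computation: reshuffling the block-concatenated basis $(v^{(1)},\ldots,v^{(r)},w^{(1)},\ldots,w^{(r)})$ into the interleaved positive basis $(v^{(1)},w^{(1)},\ldots,v^{(r)},w^{(r)})$ of $(\R^{2m})^{\oplus r}$ introduces a fixed sign depending only on $m$ and $r$. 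If that sign is $+1$, the naive $\widetilde g$ is already equivariant; otherwise it is absorbed by precomposing the identification $\R^{2n}\cong(\R^{2m})^{\oplus r}$ with a suitable fixed coordinate involution in one summand (an orientation-reversing change of basis of a single $\R^{2m}$). With $\widetilde g$ constructed, the second index containment again follows from monotonicity.

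The only real obstacle is this orientation bookkeeping: the set-theoretic identity $g(V^\perp) = g(V)^\perp$ is transparent, so everything reduces to a fixed sign that can always be compensated at the level of the ambient identification. The rest of the argument is formal: continuity and equivariance of $g$ (resp.\ $\widetilde g$) are immediate from the construction, and the index inclusions then follow from the fact that a continuous $\Z/2$-map $X\to Y$ yields $\ind_{\Z/2}(X;\F_2)\supseteq \ind_{\Z/2}(Y;\F_2)$.
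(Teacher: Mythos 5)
Your proposal is correct and takes essentially the same approach as the paper: both use the diagonal-type embedding $V \mapsto V^{\oplus d}$ induced by a fixed decomposition $\R^{2n} \cong (\R^{2m})^{\oplus d}$ and then invoke monotonicity of the Fadell--Husseini index. The only difference is that you spell out the orientation bookkeeping for $\widetilde g$ (the sign of the block-reshuffling permutation), whereas the paper simply asserts that ``appropriate orientations\ldots are assumed'' and leaves the equivariance check to direct inspection.
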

\begin{proof}
Let $d:=\tfrac{n}{m}$.
Choose a decomposition of the Euclidean space $\R^{2n}=(\R^{2m})^{\oplus d}$.
Then define the map $g$ and $\widetilde{g}$ by
\begin{equation}
	\label{eq : def of g}
	 \big( V\in G_{m}(\R^{2m})\big) \ \longmapsto \  \big(V^{\oplus d}\in G_{n}(\R^{2n})\big).
\end{equation}
In the case of the map $\widetilde{g}$ appropriate orientations of the spaces $V$ and $V^{\oplus d}$ are assumed.
By direct inspection the maps $g$ and $\widetilde{g}$ are $\Z/2$-maps.
\end{proof}

The $\Z/2$-map $g$ we just defined has an additional property.
Let $\gamma^n(\R^{n+k})$ denotes the canonical $n$-dimensional vector bundle over the Grassmann manifold $G_{n}(\R^{n+k})$, consult \cite[Lem.\,5.2]{Milnor1974}.

\begin{corollary}
\label{cor : Z/2-map between Grassmannians covered}
The $\Z/2$-map $g\colon G_{m}(\R^{2m})\longrightarrow G_{n}(\R^{2n})$ defined by \eqref{eq : def of g} is covered by a vector bundle map $\gamma^m(\R^{2m})^{\oplus d}\longrightarrow \gamma^n(\R^{2n})$.
\end{corollary}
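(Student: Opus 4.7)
The plan is to observe that the bundle map is essentially tautological once we unravel the definitions, and that the whole content is choosing the right identification of fibers.

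First I would fix the decomposition $\R^{2n}=(\R^{2m})^{\oplus d}$ used to define $g$ in \eqref{eq : def of g}, and describe the total spaces of both bundles concretely. Recall that
\[
E\big(\gamma^n(\R^{2n})\big)=\{(W,w)\in G_n(\R^{2n})\times \R^{2n} : w\in W\},
\]
and similarly $E(\gamma^m(\R^{2m}))=\{(V,v) : v\in V\}$, so the Whitney sum $\gamma^m(\R^{2m})^{\oplus d}$ has total space
\[
E\big(\gamma^m(\R^{2m})^{\oplus d}\big)=\{(V,v_1,\ldots,v_d) : v_1,\ldots,v_d\in V\}\subseteq G_m(\R^{2m})\times (\R^{2m})^{\oplus d}.
\]

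Next I would define the covering map $G\colon \gamma^m(\R^{2m})^{\oplus d}\longrightarrow \gamma^n(\R^{2n})$ by
\[
G(V,v_1,\ldots,v_d):=\big(V^{\oplus d},\,(v_1,\ldots,v_d)\big),
\]
where on the right-hand side the tuple $(v_1,\ldots,v_d)$ is regarded as an element of $(\R^{2m})^{\oplus d}=\R^{2n}$ under the chosen decomposition. This is well-defined because each $v_i\in V$ forces $(v_1,\ldots,v_d)\in V^{\oplus d}=g(V)$, so it really lands in the canonical bundle over $G_n(\R^{2n})$.

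Finally I would verify, by direct inspection, that $G$ covers $g$ (the base component is precisely $V\mapsto V^{\oplus d}$), that $G$ is continuous (it is polynomial/linear in the fibers and just the identity on subspace coordinates), and that the induced fiber map $V^{\oplus d}\to V^{\oplus d}$ is the identity, hence a linear isomorphism; therefore $G$ is a vector bundle map. The only step that requires any attention at all is keeping track of the identification $(\R^{2m})^{\oplus d}=\R^{2n}$ consistently between the two bundles, but once that identification is made, the bundle map is literally the identity on fibers and there is nothing further to check.
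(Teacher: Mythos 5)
Your proposal is correct and essentially identical to the paper's one-line proof, which defines the covering map by $(V;v_1,\ldots,v_d)\longmapsto (V^{\oplus d};v_1+\cdots+v_d)$; the expression $v_1+\cdots+v_d$ there is precisely your tuple $(v_1,\ldots,v_d)$ once each $v_i$ is embedded into the $i$-th summand of $\R^{2n}=(\R^{2m})^{\oplus d}$ and the sum is taken in $\R^{2n}$. Your version spells out the identification of fibers and verifies that the fiber map is a linear isomorphism, which is exactly the property needed to deduce $g^*\gamma^n(\R^{2n})\cong\gamma^m(\R^{2m})^{\oplus d}$ in the later arguments.
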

\begin{proof}
	The vector bundle map that covers $g$ is defined by $(V;v_1,\ldots,v_d)\longmapsto (V^{\oplus d};v_1+\cdots+v_d)$.
\end{proof}

\medskip
Now we define a $\Z/2$-map from a Grassmann manifold into a sphere equipped with the antipodal action.

\begin{proposition}
	\label{prop : Z/2-map to sphere}
	Let $n\geq 1$ be an integer, and let the sphere $S^{2n-1}$ be equipped with the antipodal action.
	There exists a $\Z/2$-map
	$
	h\colon G_{n}(\R^{2n})\longrightarrow S^{2n-1}
	$,
	and consequently
	\[
	\ind_{\Z/2}G_{n}(\R^{2n})\supseteq \ind_{\Z/2}S^{2n-1}=\langle t^{2n} \rangle .
	\]
\end{proposition}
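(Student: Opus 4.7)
The plan is to exhibit an explicit equivariant map $h\colon G_{n}(\R^{2n})\to S^{2n-1}$ and then invoke the monotonicity of the Fadell--Husseini index together with the already stated computation $\ind_{\Z/2}(S^{2n-1};\F_2)=\langle t^{2n}\rangle$.

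For the construction, I fix any unit vector $v_0\in S^{2n-1}\subset\R^{2n}$ and set
\[
h(V)\ :=\ p_V(v_0)-p_{V^{\perp}}(v_0),
\]
where $p_V$ and $p_{V^\perp}$ denote the orthogonal projections onto $V$ and $V^{\perp}$. Geometrically this is the reflection of $v_0$ through the subspace $V$. Continuity is standard since the projectors $p_V$ depend continuously on $V\in G_{n}(\R^{2n})$.

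Next, I would verify the three properties that make $h$ the required $\Z/2$-map. First, equivariance: since $(V^{\perp})^{\perp}=V$ one has $h(V^{\perp})=p_{V^{\perp}}(v_0)-p_V(v_0)=-h(V)$, matching the antipodal action on $S^{2n-1}$. Second, the image actually lies on the unit sphere: writing $v_0=a+b$ with $a\in V$ and $b\in V^{\perp}$, orthogonality gives
\[
\|h(V)\|^2=\|a-b\|^2=\|a\|^2+\|b\|^2=\|v_0\|^2=1.
\]
Third, $h(V)$ never vanishes (which is also built into the calculation above): if $a=b$ then $a=b\in V\cap V^{\perp}=\{0\}$, forcing $v_0=0$, contradicting $\|v_0\|=1$.

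Having such a $\Z/2$-map $h$, the monotonicity property of the Fadell--Husseini index recalled in the introduction immediately yields
\[
\ind_{\Z/2}(G_{n}(\R^{2n});\F_2)\ \supseteq\ \ind_{\Z/2}(S^{2n-1};\F_2)\ =\ \langle t^{2n}\rangle,
\]
which is exactly the claim. There is no real obstacle here beyond choosing the right equivariant construction; the slightly subtle point is the non-vanishing of $h$, but it follows from the one-line observation $V\cap V^{\perp}=\{0\}$, and in fact the stronger norm-preserving identity makes the radial retraction step unnecessary.
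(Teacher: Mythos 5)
Your proof is correct and is essentially the same construction as the paper's, just phrased geometrically: with $v_0=e_1$ your map $h(V)=p_V(e_1)-p_{V^\perp}(e_1)=2\bigl(p_V(e_1)-\tfrac12 e_1\bigr)$ is exactly twice the paper's map $k$ (which reads off the first row of the projection matrix $A$ and subtracts $\tfrac12$ from the diagonal entry). The one small improvement is your observation that this vector already has unit norm, so the radial retraction the paper invokes at the end is unnecessary.
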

\begin{proof}
	For the definition of the map $h$ we consider elements $V$ of the Grassmann manifold $G_{n}(\R^{2n})$ as $(2n)\times (2n)$-matrices $A=(a_{i,j})$ which represent the orthogonal projection onto $V$.
	Such matrices fulfill the following properties
	\[
	A^{\top}=A, \quad A^2=A, \quad \mathrm{tr}A=n .
	\]
	Here $A^{\top}$ denotes the transpose matrix of $A$.
	The $\Z/2$-action on $G_{n}(\R^{2n})$ in the language of matrices is given by $\omega\cdot A=I-A$, where $I$ denotes the unit $(2n)\times (2n)$-matrix.
	First, we define the map $k\colon G_{n}(\R^{2n})\longrightarrow \R^{2n}{\setminus}\{0\}$ by
	\[
	k(A):=(a_{1,1}-\tfrac12,\,a_{1,2},\,a_{1,3},\,\ldots,\,a_{1,2n}).
	\]
	The map $k$ is well defined because $0\notin\mathrm{im}(k)$.
	Indeed, if $k(A)=0$ for some matrix $A$ then $\tfrac12$ is an eigenvalue of $A$ with eigenvector $(1,0,\cdots,0)$.
	This yields a contradiction with the requirement that $A^2=A$.
	Further on, if the action on $\R^{2n}{\setminus}\{0\}$ is assumed to be antipodal then $k$ is a $\Z/2$-map; as it can be demonstrated by the following commutative diagram:
\[
\xymatrix{
(a_{i,j})_{i,j=1,2n}\ar@{|->}[r]^-{k}\ar@{|->}[d]_{\omega\cdot}  &  (a_{1,1}-\tfrac12,a_{1,2},a_{1,3},\ldots,a_{1,2n})\ar@{|->}[d]_{\omega\cdot}\\
(\delta_{i,j}-a_{i,j})_{i,j=1,2n}\ar@{|->}[r]^-{k}  & (\tfrac12-a_{1,1},-a_{1,2},-a_{1,3},\ldots,-a_{1,2n}),
}
\]
where $\delta_{i,j}$ is the Kronecker symbol.
	Finally, the map $h$ is defined as the composition of $k$ and the radial retraction $\R^{2n}{\setminus}\{0\}\longrightarrow S^{2n-1}$ which is also a $\Z/2$-map.
\end{proof}

\medskip
The cohomology of the Grassmann manifold is described as a quotient polynomial algebra by the following classical result of Borel \cite[p.\,190]{Borel1953}.

\begin{theorem}
	\label{th : cohomology of Grassmaniann}
	Let $n,k\geq 1$ be integers.
	Then
	\[
	H^*(G_{n}(\R^{n+k});\F_2)\cong\F_2[w_1,\ldots,w_n;\bar{w}_1,\ldots,\bar{w}_k]/I_{n,k},
	\]
	where $\deg(w_i)=i$ for $1\leq i\leq n$, $\deg(\bar{w}_j)=j$ for $1\leq j\leq k$, and  the ideal $I_{n,k}$ is generated by $n+k$ relations which are derived from the equality
	\[
		(1+w_1+\cdots+w_n)(1+\bar{w}_1+\cdots+\bar{w}_k)=1.
	\]
\end{theorem}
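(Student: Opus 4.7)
The plan is to realise the ring on the right-hand side as characteristic classes of the canonical bundles, and then finish by a Poincaré series comparison. Over $G := G_n(\R^{n+k})$ we have the tautological $n$-plane bundle $\gamma = \gamma^n(\R^{n+k})$ together with its $k$-plane orthogonal complement $\gamma^\perp$, and by construction the Whitney sum $\gamma\oplus\gamma^\perp$ is the trivial $(n+k)$-plane bundle. Setting $w_i := w_i(\gamma)$ and $\bar w_j := w_j(\gamma^\perp)$, the Whitney product formula yields exactly
\[
(1+w_1+\cdots+w_n)(1+\bar w_1+\cdots+\bar w_k)=1
\]
in $H^*(G;\F_2)$. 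Thus the assignment $w_i\mapsto w_i(\gamma)$, $\bar w_j\mapsto w_j(\gamma^\perp)$ descends to a well-defined graded ring homomorphism
\[
\phi\colon \F_2[w_1,\ldots,w_n,\bar w_1,\ldots,\bar w_k]/I_{n,k}\longrightarrow H^*(G;\F_2).
\]

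To prove $\phi$ is surjective I would invoke the Schubert cell decomposition of $G$. The cells are indexed by Young diagrams fitting inside the $n\times k$ rectangle, and their closures (Schubert varieties) give an $\F_2$-additive basis of $H^*(G;\F_2)$. A Giambelli-type determinantal identity expresses each Schubert class as a polynomial in the $w_i$, placing every additive generator in the image of $\phi$.

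For injectivity — which I expect to be the most delicate step — the plan is a Poincaré series comparison. On the geometric side, the Schubert count gives
\[
\mathrm{Poin}(G;t) \;=\; \frac{\prod_{i=1}^{n+k}(1-t^i)}{\prod_{i=1}^{n}(1-t^i)\prod_{j=1}^{k}(1-t^j)}.
\]
On the algebraic side, interpret the $w_i$ and $\bar w_j$ as the elementary symmetric polynomials in two disjoint sets of formal variables $x_1,\ldots,x_n$ and $y_1,\ldots,y_k$ of degree one; the relations of $I_{n,k}$ then read precisely as the vanishing of the elementary symmetric polynomials $e_1,\ldots,e_{n+k}$ in the combined variable set. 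This identifies the source of $\phi$ with the $S_n\times S_k$-invariant subring of the coinvariant algebra $\F_2[x_1,\ldots,x_n,y_1,\ldots,y_k]/(e_1,\ldots,e_{n+k})$, a standard invariant-theoretic object whose Hilbert series is again the Gaussian binomial above. Since $\phi$ is then a graded surjection between graded $\F_2$-vector spaces with equal finite Hilbert dimension in every degree, it must be an isomorphism, completing the proof.
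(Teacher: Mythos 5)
The paper itself does not prove this statement but cites it as a classical theorem of Borel, whose argument runs the spectral sequence of the fibration $G_n(\R^{n+k})\rightarrow\B(\OO(n)\times\OO(k))\rightarrow\B\OO(n+k)$; your route through Schubert cells and Hilbert series is a standard and legitimate alternative. The construction of $\phi$, the surjectivity via Giambelli/Schubert, and the Poincar\'e series of the Grassmannian are all fine. The gap is in the Hilbert series count of the algebraic source. You identify $\F_2[w,\bar w]/I_{n,k}$ with the $S_n\times S_k$-invariant subring of the coinvariant algebra $\F_2[x,y]/(e_1,\dots,e_{n+k})$ and then appeal to the ``standard'' fact that this invariant ring has the Gaussian binomial as Hilbert series. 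Neither step is justified over $\F_2$. The natural map $\F_2[w,\bar w]/I_{n,k}\rightarrow\bigl(\F_2[x,y]/(e_1,\dots,e_{n+k})\bigr)^{S_n\times S_k}$ is injective (by faithful flatness of $\F_2[x,y]$ over $\F_2[w,\bar w]$), but its surjectivity requires exactness of the invariants functor, i.e.\ a Reynolds operator, which does not exist when $2$ divides $|S_n\times S_k|$. And the Hilbert series of that invariant ring is a Molien/Shephard--Todd computation carried out in characteristic zero; it is not automatic mod $2$. The matter is not cosmetic: quotienting a polynomial ring by $n+k$ homogeneous relations of degrees $1,\dots,n+k$ only gives a Hilbert series that is coefficientwise $\geq$ your Gaussian binomial, with equality exactly when the relations form a regular sequence. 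So an upper bound on the source is precisely what you still owe, and the invariant-of-coinvariants detour does not supply it.

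The repair is direct and stays in characteristic $2$. The ring $A:=\F_2[w_1,\dots,w_n,\bar w_1,\dots,\bar w_k]$ is a polynomial ring, hence Cohen--Macaulay. The ring $\F_2[x_1,\dots,x_n,y_1,\dots,y_k]$ is a free $A$-module of rank $n!\,k!$ (the symmetric group is a reflection group in every characteristic, so the polynomial ring is free over its invariants), hence faithfully flat, and therefore $A/I_{n,k}$ injects into the $S_{n+k}$-coinvariant algebra $\F_2[x,y]/(e_1,\dots,e_{n+k})$, which has $\F_2$-dimension $(n+k)!$. In particular $A/I_{n,k}$ is finite-dimensional, so $e_1,\dots,e_{n+k}$ is a homogeneous system of parameters for $A$; in a graded Cohen--Macaulay ring a homogeneous system of parameters is a regular sequence. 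Consequently
\[
\mathrm{Hilb}\bigl(A/I_{n,k}\bigr)=\mathrm{Hilb}(A)\cdot\prod_{\ell=1}^{n+k}(1-t^{\ell})
=\frac{\prod_{\ell=1}^{n+k}(1-t^{\ell})}{\prod_{i=1}^{n}(1-t^{i})\prod_{j=1}^{k}(1-t^{j})},
\]
which is your Gaussian binomial. Substituting this computation for the coinvariant-invariants step closes the proof.
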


Here the generators $w_i$, for $1\leq i\leq n$, can be identified with the Stiefel--Whitney classes of the canonical bundle $\gamma^n(\R^{n+k})$.
The remaining generators $\bar{w}_i$, for $1\leq i\leq k$, can be identified with the dual Stiefel--Whitney classes of $\gamma^n(\R^{n+k})$.

In the case when $n=k$ the $\Z/2$ action on the Grassmann manifold $G_{n}(\R^{2n})$ we consider, induces an action on the the corresponding cohomology $H^*(G_{n}(\R^{n+k});\F_2)$.
Let $\nu^n(\R^{2n})$ denote the normal bundle of the canonical vector bundle $\gamma^n(\R^{2n})$.
The continuous map $\omega\colon G_{n}(\R^{2n})\longrightarrow G_{n}(\R^{2n})$,  $V\longmapsto V^{\perp}$, is covered by a bundle map $\gamma^n(\R^{2n})\longrightarrow \nu^n(\R^{2n})$ given by $(V;v)\longmapsto (V^{\perp};v)$.
Since $w_i(\nu^n(\R^{2n}))=\bar{w}_i(\gamma^n(\R^{2n}))$ the naturality on the Stiefel--Whitney classes \cite[Ax.\,2,\,p.\,35]{Milnor1974} implies that $\omega$ acts of the generators of the cohomology $H^*(G_{n}(\R^{2n});\F_2)$ as follows
\[
\omega\cdot w_i = \bar{w}_i
\qquad\qquad\text{and}\qquad\qquad
\omega\cdot \bar{w}_i=w_i,
\]
for all $1\leq i\leq n$.

Using the description of the $\Z/2$ action on the cohomology of the Grassmann manifold $G_{n}(\R^{2n})$ we compute Fadell--Husseini indices in the case when $n=1$ and $n=2$.

\begin{proposition}
	\label{prop : index for n=1 and n=2}
	~~
	\begin{compactenum}[\rm (i)]
	\item $\ind_{\Z/2}(G_{1}(\R^{2});\F_2)=\langle t^{2}\rangle$,	
	\item $\ind_{\Z/2}(G_{2}(\R^{4});\F_2)=\langle t^{4}\rangle$.
	\end{compactenum}
\end{proposition}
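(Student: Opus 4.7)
The plan is to identify the Borel construction explicitly in each case and then read off the kernel of the map $\F_2[t]\to H^*(\E\Z/2\times_{\Z/2}G_n(\R^{2n});\F_2)$ induced by the classifying map of the free $\Z/2$-action. Since in both cases the action is free, the Borel construction is homotopy equivalent to the quotient $G_n(\R^{2n})/\Z/2$, which I want to recognize as a familiar space.

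For (i), $G_{1}(\R^{2})\cong\RP^{1}\cong S^{1}$, and the orthogonal-complement involution is rotation by $\pi/2$ in the angular parametrization of lines; under $\RP^{1}\cong S^{1}$ this becomes the antipodal action. Alternatively, the $\Z/2$-map $h$ from Proposition~\ref{prop : Z/2-map to sphere} is a homeomorphism in this case. Either way, $\ind_{\Z/2}(G_{1}(\R^{2});\F_{2})=\ind_{\Z/2}(S^{1};\F_{2})=\langle t^{2}\rangle$.

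For (ii), the key step is the classical identification $\widetilde{G}_{2}(\R^{4})\cong S^{2}\times S^{2}$ arising from the decomposition $\Lambda^{2}\R^{4}=\Lambda^{+}\oplus\Lambda^{-}$ into self-dual and anti-self-dual $2$-forms (a unit simple bivector is determined by its two components, both of length $1/\sqrt 2$). Under this identification I would verify that (a) the orientation-reversing covering involution $\widetilde{G}_{2}(\R^{4})\to G_{2}(\R^{4})$ acts as the antipodal map $(x,y)\mapsto(-x,-y)$, and (b) the orthogonal-complement involution acts as the Hodge star, $(x,y)\mapsto(x,-y)$. The two involutions together generate the group $\{\pm1\}\times\{\pm1\}$ acting factor-wise on $S^{2}\times S^{2}$. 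Since the resulting action is free, the Borel construction for the ortho-complement $\Z/2$-action on $G_{2}(\R^{4})$ is homotopy equivalent to
\[
(S^{2}\times S^{2})/(\{\pm 1\}\times\{\pm 1\})=\RP^{2}\times\RP^{2}.
\]
It then remains to determine the classifying map $\RP^{2}\times\RP^{2}\to\B\Z/2$ of the double cover $G_{2}(\R^{4})\to\RP^{2}\times\RP^{2}$. This cover corresponds to the diagonal subgroup of $\pi_{1}(\RP^{2}\times\RP^{2})=\Z/2\times\Z/2$, so on $\pi_{1}$ the classifying homomorphism is the sum $(a,b)\mapsto a+b$; dually, writing $H^{*}(\RP^{2}\times\RP^{2};\F_{2})=\F_{2}[x,y]/(x^{3},y^{3})$, the pullback sends $t\mapsto x+y$. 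A direct calculation in this ring gives $(x+y)^{3}=x^{2}y+xy^{2}\neq 0$ and $(x+y)^{4}=0$, so the kernel is exactly $\langle t^{4}\rangle$, yielding the desired index.

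The main obstacle is pinning down the two involutions on $S^{2}\times S^{2}$ without a sign error: the covering involution must be antipodal on both factors, while the orthogonal-complement action acts on only one factor, and from this the diagonal subgroup must come out as the one classifying $G_{2}(\R^{4})$. Once these identifications are established, the calculation of the classifying homomorphism and the subsequent polynomial arithmetic in $\F_{2}[x,y]/(x^{3},y^{3})$ are routine.
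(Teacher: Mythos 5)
Your part (i) coincides with the paper's argument. Part (ii), however, takes a genuinely different and more geometric route: the paper runs the Serre spectral sequence of the Borel construction $G_2(\R^4)\hookrightarrow \E(\Z/2)\times_{\Z/2}G_2(\R^4)\to\B(\Z/2)$, exploits the $\Z/2$-module structure of $H^*(G_2(\R^4);\F_2)$, kills the relevant differentials $\partial_2(w_1)$, $\partial_2(w_2+\bar w_2)$, $\partial_3(w_2+\bar w_2)$ by a comparison with the spectral sequence for $G_1(\R^2)$ (using Proposition~\ref{prop : Z/2-map between Grassmannians} and Corollary~\ref{cor : Z/2-map between Grassmannians covered}) together with the module relation $t\cdot(w_2+\bar w_2)=0$, and then clinches the lower bound $\langle t^4\rangle\subseteq\ind$ from Proposition~\ref{prop : Z/2-map to sphere}. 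You instead identify the quotient outright: via the Hodge decomposition $\Lambda^2\R^4=\Lambda^+\oplus\Lambda^-$ one gets $\widetilde G_2(\R^4)\cong S^2\times S^2$, on which orientation reversal is $(x,y)\mapsto(-x,-y)$ and the orthogonal complement (Hodge star) is $(x,y)\mapsto(x,-y)$; since the action is free, $\E(\Z/2)\times_{\Z/2}G_2(\R^4)\simeq G_2(\R^4)/(\Z/2)\cong\RP^2\times\RP^2$, and the double cover $G_2(\R^4)\to\RP^2\times\RP^2$ corresponds to the diagonal $\langle ab\rangle\subset\Z/2\times\Z/2$, so the classifying map pulls $t$ back to $x+y\in\F_2[x,y]/(x^3,y^3)$, whence $(x+y)^3=x^2y+xy^2\neq 0$ and $(x+y)^4=0$ give the index $\langle t^4\rangle$ in one stroke. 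Your identifications check out (including $|\omega^\pm|=1/\sqrt2$ for a unit simple bivector and the deck group being $\{\pm1\}\times\{\pm1\}$), and the computation is clean and self-contained. The trade-off is that your argument exploits the low-dimensional accident $\widetilde G_2(\R^4)\cong S^2\times S^2$ and so cannot be pushed to larger $n$, whereas the paper's spectral-sequence method, while heavier for $n=2$, is the one that scales to the general case treated later in the paper; also, the paper's route makes both the upper and lower bounds appear from uniform principles (naturality of Stiefel--Whitney classes and the $\Z/2$-map to $S^{2n-1}$) rather than from an explicit model.
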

\begin{proof}
{\rm (i)}
When $n=1$ the homeomorphisms $G_{1}(\R^{2})\cong\RP^1\cong S^1$ allow us to identify $G_{1}(\R^{2})$ with a sphere $S^1$ equipped with a free $\Z/2$ action. Consequently,
\[
\ind_{\Z/2}(G_{1}(\R^{2});\F_2)=\ind_{\Z/2}(S^1;\F_2)=\langle t^{2}\rangle.
\]	
{\rm (ii)} In the case of the Grassmann manifold $G_{2}(\R^{4})$ we consider the Borel construction fibration
\[
\xymatrix{
G_{2}(\R^{4})\ar[r] & \E(\Z/2)\times_{\Z/2}G_{2}(\R^{4})\ar[r] & \B(\Z/2),
}
\]
and its associated Serre spectral sequence whose $E_2$-term is given by
\begin{equation}
	\label{eq : spectral sequence - 01}
	E_2^{i,j}=H^i(\B(\Z/2); \mathcal{H}^j(G_{2}(\R^{4});\F_2))\cong H^i(\Z/2; H^j(G_{2}(\R^{4});\F_2)).
\end{equation}
 
\medskip
The action of $\pi_1(\B(\Z/2))\cong\Z/2$ on the cohomology of the fiber $H^*(G_{2}(\R^{4});\F_2)$ is non-trivial, and thus we first describe this action.
From Theorem \ref{th : cohomology of Grassmaniann} follows that
\[
H^*(G_{2}(\R^{4});\F_2)\cong\F_2[w_1,w_2;\bar{w}_1,\bar{w}_2]/I_{2,2},
\]
where the ideal $I_{2,2}$ is given by
\[
I_{2,2}=\langle w_1+ \bar{w}_1,  w_2+ \bar{w}_2 + w_1\bar{w}_1 , w_1\bar{w}_2 +  w_2\bar{w}_1,w_2\bar{w}_2 \rangle.
\]
Thus, the cohomology $H^*(G_{2}(\R^{4});\F_2)$, as a $\Z/2$-module, can be presented as
\[
H^j(G_{2}(\R^{4});\F_2)=
\begin{cases}
	\langle 1\rangle\cong \F_2,  & j=0,\\
	\langle w_1\rangle\cong \langle \bar{w}_1\rangle\cong\F_2,  & j=1,\\
	\langle w_2,\bar{w}_2\rangle\cong\F_2[\Z/2],  & j=2,\\
	\langle w_1w_2\rangle\cong \langle w_1\bar{w}_2\rangle\cong\langle w_2\bar{w}_1\rangle\cong
	\langle \bar{w}_1\bar{w}_2\rangle\cong\F_2,& j=3,\\
	\langle w_2^2\rangle\cong \langle \bar{w}_2^2\rangle\ \cong\F_2,& j=4,\\
	0, & \text{otherwise}.
\end{cases}
\]
Consequently the $E_2$-term of the spectral sequence \eqref{eq : spectral sequence - 01} can be evaluated as follows:
\[
E^{i,j}_2(G_{2}(\R^{4}))\cong H^i(\Z/2; H^j(G_{2}(\R^{4});\F_2))\cong
\begin{cases}
	H^i(\Z/2; \F_2),  & j=0,1,3,4,\\
	H^i(\Z/2; \F_2[\Z/2]),  & j=2,\\
	0, & \text{otherwise}.
\end{cases}
\]

For an illustration of the $E_2$-term see Figure \ref{fig 1: E_2 term}.
Each row of the spectral sequence \eqref{eq : spectral sequence - 01} is a $H^*(\Z/2;\F_2)$-module.
In particular, the rows $0,1,3,4$ of the  $E_2$-term are free $H^*(\Z/2;\F_2)$-modules generated, respectively, by
\[
1\in E^{0,0}_2(G_{2}(\R^{4})), \ w_1\in E^{0,1}_2(G_{2}(\R^{4})), \ w_1w_2\in E^{0,3}_2(G_{2}(\R^{4})), \ w_2^2\in E^{0,4}_2(G_{2}(\R^{4})).
\]
In the first column of the second row we have that $E_2^{0,2}\cong H^0(\Z/2; \F_2[\Z/2])\cong \F_2[\Z/2]^{\Z/2}\cong \F_2\cong  \langle w_2 +\bar{w}_2\rangle$.
Further on, since $\F_2[\Z/2]$ is a free $\Z/2$-module the rest of the second row has to vanish, that is  $E_2^{i,2}\cong H^i(\Z/2; \F_2[\Z/2])=0$ for $i\geq 1$.

The module structure on the second row $E^{*,2}_2(G_{2}(\R^{4}))$ is obvious and is given by $t\cdot (w_2+\bar{w}_2)=0$.

\begin{figure}
\centering
\includegraphics[scale=1]{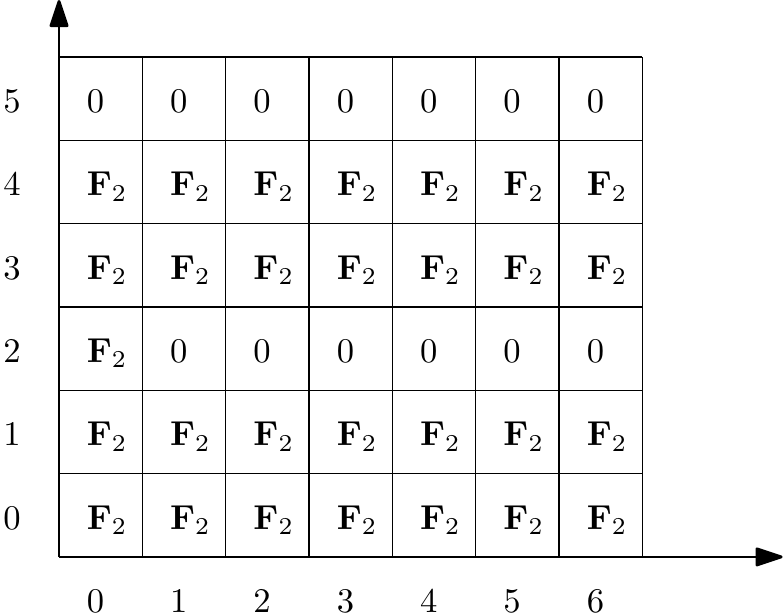}
\caption{\small The $E_2$-term of the spectral sequence \eqref{eq : spectral sequence - 01}. }
\label{fig 1: E_2 term}
\end{figure}

\medskip
First we prove that the second differential $\partial_2$ vanishes on $w_1\in E^{0,1}_2$, that is $\partial_2(w_1)=0$.
For that we use a comparison of spectral sequences.
Since $2$ is a multiple of $1$, according to Proposition \ref{prop : Z/2-map between Grassmannians}, there exists a $\Z/2$-map $g\colon G_1(\R^2)\longrightarrow G_2(\R^4)$.
The map $g$ induces a bundle morphism between the following Borel construction fibrations:
\[
\xymatrix{
\E(\Z/2)\times_{\Z/2}G_1(\R^2)\ar[rr]^-{\id\times_{\Z/2}g}\ar[d] & &\E(\Z/2)\times_{\Z/2}G_2(\R^4)\ar[d]\\
\B(\Z/2)\ar[rr]^-{\id} & &\B(\Z/2).
}
\]
The bundle morphism furthermore induces a morphism between corresponding Serre spectral sequences
\[
E^{i,j}_k(g)\colon E^{i,j}_k(G_{1}(\R^{2}))\longleftarrow E^{i,j}_k(G_{2}(\R^{4})),
\qquad\qquad k\geq 2
\]
with the property that $E^{i,0}_2(g)=\id$ for all $i\in\Z$.

On the other hand Corollary \ref{cor : Z/2-map between Grassmannians covered} implies that the map $g$ is covered by a vector bundle map
$\gamma^1(\R^{2})^{\oplus 2}\longrightarrow \gamma^2(\R^{4})$.
Hence, from naturality of the Stiefel--Whitney classes we have that
\[
g^*(w_1)=g^*(w_1(\gamma^2(\R^{4})))=w_1(\gamma^1(\R^{2})^{\oplus 2})={2\choose 1}w_1(\gamma^1(\R^{2}))={2\choose 1}w_1'=0,
\]
where $w_1':=w_1(\gamma^1(\R^{2}))$, is the generator of $H^1(G_1(\R^{2});\F_2)\cong\F_2$.

Since both rows $E^{*,1}_2(G_{1}(\R^{2}))$ and $E^{*,1}_2(G_{2}(\R^{4}))$ are free $H^*(\Z/2;\F_2)$-modules generated respectively by $w_1$ and $w_1'$ it follows that the morphism $E^{i,1}_2(g)$ is the zero morphism.
The morphism of spectral sequences commutes with differentials and consequently,
\begin{multline*}
		\partial_2(w_1)=(\id\circ \partial_2)(w_1)= (E^{2,0}_2(g)\circ \partial_2)(w_1)= (\partial_2\circ E^{0,1}_2(g))(w_1)= \\
		\partial_2 (E^{0,1}_2(g)(w_1))=\partial_2 (g^*(w_1))=\partial_2 (0)=0.
\end{multline*}
Here we use the facts
\begin{eqnarray*}
E^{0,1}_2(G_{1}(\R^{2}))&=&H^0(\Z/2;H^1(G_{1}(\R^{2});\F_2))\cong H^1(G_{1}(\R^{2});\F_2)^{\Z/2}\cong H^1(G_{1}(\R^{2});\F_2),\\
E^{0,1}_2(G_{2}(\R^{4}))&=&H^0(\Z/2;H^1(G_{2}(\R^{4});\F_2))\cong H^1(G_{2}(\R^{4});\F_2)^{\Z/2}\cong H^1(G_{2}(\R^{4});\F_2),	
\end{eqnarray*}
that imply equality of the maps $E^{2,0}_2(g)=g^*$.
Furthermore, since differentials are $H^*(\Z/2;\F_2)$-module morphisms, $w_1$ is a generator of the first row of the spectral sequence \eqref{eq : spectral sequence - 01}, as a $H^*(\Z/2;\F_2)$-module, we conclude that the differential $\partial_2$ vanishes on the complete first row.
Consequently,
\[
E^{i,0}_3(G_{2}(\R^{4}))\cong E^{i,0}_2(G_{2}(\R^{4}))\cong H^i(\Z/2;\F_2).
\]

\medskip
Next, we prove that $\partial_2(w_2+\bar{w}_2)=0$ and $\partial_3(w_2+\bar{w}_2)=0$.
Let us assume first that $\partial_2(w_2+\bar{w}_2)=t^2\cdot w_1\neq 0$.
Then we have a contradiction:
\[
0\neq t^3\cdot w_1= t\cdot (t^2\cdot w_1) = t\cdot \partial_2(w_2+\bar{w}_2)=\partial_2(t\cdot(w_2+\bar{w}_2))=\partial_2(0)=0.
\]
Thus, $\partial_2(w_2+\bar{w}_2)=0$ and consequently $E^{*,2}_3(G_{2}(\R^{4}))\cong E^{*,2}_2(G_{2}(\R^{4}))$.
Similarly, if $\partial_3(w_2+\bar{w}_2)= t^3\cdot 1\neq 0$ then we again get a contradiction:
\[
0\neq t^4\cdot 1= t\cdot (t^3\cdot 1) = t\cdot \partial_3(w_2+\bar{w}_2)=\partial_3(t\cdot(w_2+\bar{w}_2))=\partial_3(0)=0.
\]
Therefore,
\[
E^{*,0}_4(G_{2}(\R^{4}))\cong E^{*,0}_3(G_{2}(\R^{4}))\cong E^{*,0}_2(G_{2}(\R^{4}))\cong H^*(\Z/2;\F_2).
\]
Moreover, for $0\leq i\leq 3$ we have the following sequence of isomorphisms
\[
E^{i,0}_{\infty}(G_{2}(\R^{4}))\cong \cdots\cong E^{i,0}_2(G_{2}(\R^{4}))\cong H^i(\Z/2;\F_2) \neq 0.
\]

\medskip
Finally, from the previous sequence of isomorphisms and the equality \eqref{eq : index from ss} we conclude that
\[
	\ind_{\Z/2} (G_{2}(\R^{4});\F_2) \subseteq E_2^{\geq 4,0}(G_{2}(\R^{4}))\cong H^{\geq 4}(\Z/2;\F_2)=\langle t^4\rangle .
\]
On the other hand from Proposition \ref{prop : Z/2-map to sphere} we have that
$\ind_{\Z/2} (G_{2}(\R^{4});\F_2)\supseteq \langle t^4\rangle$.
Hence,
\[
\ind_{\Z/2} (G_{2}(\R^{4});\F_2)=\langle t^4\rangle,
\]
and the proof of the proposition is complete.
\end{proof}

Now using Proposition \ref{prop : Z/2-map between Grassmannians}, Corollary \ref{cor : Z/2-map between Grassmannians covered} and comparison of Serre spectral sequences we evaluate the index $\ind_{\Z/2}(G_{n}(\R^{2n});\F_2)$ for all odd integers $n$.

\begin{corollary}
\label{cor : index for odds}
Let $n\geq 1$ be an odd integer.
Then
\[
\ind_{\Z/2}(G_{n}(\R^{2n});\F_2)=\langle t^{2}\rangle.
\]
\end{corollary}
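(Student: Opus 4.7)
The plan is to establish the two inclusions of
$\ind_{\Z/2}(G_n(\R^{2n}); \F_2) = \langle t^2 \rangle$
separately. The upper bound is immediate: since $1$ divides every integer $n$, Proposition \ref{prop : Z/2-map between Grassmannians} with $m=1$ supplies a $\Z/2$-map $g\colon G_1(\R^2)\longrightarrow G_n(\R^{2n})$, and then Proposition \ref{prop : index for n=1 and n=2}(i) yields
\[
\ind_{\Z/2}(G_n(\R^{2n}); \F_2)\subseteq \ind_{\Z/2}(G_1(\R^2); \F_2)=\langle t^2\rangle.
\]
It remains to exhibit $t^2$ as an element of $\ind_{\Z/2}(G_n(\R^{2n}); \F_2)$, and for this I plan to imitate the spectral-sequence comparison used in the proof of Proposition \ref{prop : index for n=1 and n=2}(ii).

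Consider the Serre spectral sequence of the Borel fibration for $G_n(\R^{2n})$. The class $w_1\in H^1(G_n(\R^{2n});\F_2)$ is $\Z/2$-fixed: the action sends $w_1$ to $\bar w_1$, while the degree-one relation $w_1+\bar w_1=0$ in the ideal $I_{n,n}$ identifies the two. Hence $w_1$ represents a class in $E_2^{0,1}(G_n(\R^{2n}))$ whose only potentially non-zero differential is $\partial_2(w_1)\in E_2^{2,0}(G_n(\R^{2n}))\cong H^2(\Z/2;\F_2)$, a group generated by $t^2$. To pin this down, I compare with the spectral sequence for $G_1(\R^2)$ via the morphism induced by $g$. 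Since the action $V\mapsto V^\perp$ on $G_1(\R^2)\cong S^1$ is the antipodal action, it is free, so the Borel construction is homotopy equivalent to $\RP^1\cong S^1$, whose cohomology vanishes above degree one. A dimension count on the $E_2$-page of the $G_1(\R^2)$ spectral sequence then forces $\partial_2(w_1')=t^2$, where $w_1'=w_1(\gamma^1(\R^2))$ generates $H^1(G_1(\R^2);\F_2)$.

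By Corollary \ref{cor : Z/2-map between Grassmannians covered} the map $g$ is covered by a bundle map $\gamma^1(\R^2)^{\oplus n}\longrightarrow\gamma^n(\R^{2n})$, so the Whitney sum formula gives $g^*(w_1)=n\cdot w_1'=w_1'$ precisely because $n$ is odd. The induced morphism $g^*$ of spectral sequences restricts to the identity on the bottom row $E_2^{*,0}\cong H^*(\Z/2;\F_2)=\F_2[t]$ and commutes with $\partial_2$, hence
\[
\partial_2(w_1)=g^*(\partial_2(w_1))=\partial_2(g^*(w_1))=\partial_2(w_1')=t^2
\]
in the $G_n(\R^{2n})$ spectral sequence. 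Therefore $t^2$ bounds at $E_3^{2,0}$ and, by \eqref{eq : index from ss}, belongs to $\ind_{\Z/2}(G_n(\R^{2n});\F_2)$, completing the proof. The oddness of $n$ enters only through the identity $g^*(w_1)=n\cdot w_1'$; this is precisely the step that vanishes when $n$ is even and necessitates the longer separate treatment of Proposition \ref{prop : index for n=1 and n=2}(ii), but no analogous obstacle arises here.
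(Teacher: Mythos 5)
Your proposal is correct and takes essentially the same approach as the paper: both rely on the spectral-sequence comparison along the $\Z/2$-map $g\colon G_1(\R^2)\to G_n(\R^{2n})$ from Proposition~\ref{prop : Z/2-map between Grassmannians}, the computation $\partial_2(w_1')=t^2$ for the free $\Z/2$-action on $G_1(\R^2)$, and the identity $g^*(w_1)=n\cdot w_1'=w_1'$ via Corollary~\ref{cor : Z/2-map between Grassmannians covered} and Lucas/oddness of $n$. Your preliminary upper-bound step is actually redundant once $\partial_2(w_1)=t^2$ is established (since $1$ and $t$ cannot die), but it does no harm.
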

\begin{proof}
	Let $n\geq 1$ be an odd integer.
	Since $n$ is a multiple of $1$ from Proposition \ref{prop : Z/2-map between Grassmannians} we get a $\Z/2$-map $g\colon G_{1}(\R^{2})\longrightarrow G_{n}(\R^{2n})$.
	As in the proof of Proposition \ref{prop : index for n=1 and n=2} the map $g$ induces a bundle morphism between the corresponding Borel construction fibrations:
\[
\xymatrix{
\E(\Z/2)\times_{\Z/2}G_1(\R^2)\ar[rr]^-{\id\times_{\Z/2}g}\ar[d] & &\E(\Z/2)\times_{\Z/2}G_n(\R^{2n})\ar[d]\\
\B(\Z/2)\ar[rr]^-{\id} & &\B(\Z/2).
}
\]
This bundle morphism induces furthermore a morphism between Serre spectral sequences
\[
E^{i,j}_k(g)\colon E^{i,j}_k(G_{1}(\R^{2}))\longleftarrow E^{i,j}_k(G_{n}(\R^{2n})),
\qquad\qquad k\geq 2
\]
such that $E^{i,0}_2(g)=\id$ for all $i\in\Z$.

The spectral sequence $E^{*,*}_*(G_{1}(\R^{2}))$ can be described completely.
First, the $E_2$-term is given by
\[
E_2^{i,j}(G_{1}(\R^{2}))=H^i(\B (\Z/2);\mathcal{H}^j(G_{1}(\R^{2});\F_2))=H^i(\Z/2;H^j(S^1;\F_2))\cong H^i(\Z/2;\F_2)\otimes H^j(S^1;\F_2).
\]
Since $G_{1}(\R^{2})\cong S^1$ is a free $\Z/2$-space there is a homotopy equivalence
\[
\E(\Z/2)\times_{\Z/2}G_1(\R^2)\simeq G_{1}(\R^{2})/(\Z/2)\cong\RP^1.
\]
The spectral sequence $E_*^{*,*}(G_{1}(\R^{2}))$ converges to the cohomology of the Borel construction:
\[
H^*(\E(\Z/2)\times_{\Z/2}G_1(\R^2);\F_2)\cong H^*(G_{1}(\R^{2})/(\Z/2);\F_2)\cong H^*(S^1/(\Z/2);\F_2).
\]
Consequently $E_{\infty}^{i,j}(G_{1}(\R^{2}))=0$ for $(i,j)\notin\{(0,0),(1,0)\}$.
Therefore $\partial_2(w_1')=t^2$, where $w_1':=w_1(\gamma^1(\R^{2}))$ is the generator of $H^1(G_1(\R^{2});\F_2)\cong\F_2$.	

Next, from Corollary \ref{cor : Z/2-map between Grassmannians covered} we know that $g$ is covered by a vector bundle map
$\gamma^1(\R^{2})^{\oplus n}\longrightarrow \gamma^n(\R^{2n})$.
The naturality of Stiefel--Whitney classes implies that
\[
g^*(w_1)=g^*(w_1(\gamma^n(\R^{2n})))=w_1(\gamma^1(\R^{2})^{\oplus n})={n\choose 1}w_1(\gamma^1(\R^{2}))=w_1'\neq 0.
\]

The rows of the spectral sequences $E^{*,1}_2(G_{1}(\R^{2}))$ and $E^{*,1}_2(G_{n}(\R^{2n}))$ are free $H^*(\Z/2;\F_2)$-modules generated respectively by $w_1'$ and $w_1$.
Thus the the morphism $E^{i,1}_2(g)$ is an isomorphism.
Since the morphism $E^{i,j}_k(g)$ of spectral sequences commutes with differentials we have that,
\begin{multline*}
		\partial_2(w_1)=(\id\circ \partial_2)(w_1)= (E^{2,0}_2(g)\circ \partial_2)(w_1)= (\partial_2\circ E^{0,1}_2(g))(w_1)= \hfill \\
		\partial_2 (E^{0,1}_2(g)(w_1))=\partial_2 (g^*(w_1))=\partial_2 (w_1')=t^2.
\end{multline*}
Hence the equality \eqref{eq : index from ss} implies the final conclusion:
\[
\ind_{\Z/2}(G_{n}(\R^{2n});\F_2)=\langle t^{2}\rangle.
\]
\end{proof}
%%%%%%%%%%%%%%%%%%%%%%%%%%%%%%%%%%%%%%%%%%%%%%%%%%%%%%%%%%%%%%%%%%%%%%%%%%%%%%%%%%%%%
\section{Stiefel--Whitney classes of the wreath product vector bundle}
\label{sec : Stiefel--Whitney classes of the wreath product vector bundle}
%%%%%%%%%%%%%%%%%%%%%%%%%%%%%%%%%%%%%%%%%%%%%%%%%%%%%%%%%%%%%%%%%%%%%%%%%%%%%%%%%%%%%

In this section we introduce a notion of the wreath square of a vector bundle and derive a formula for computation of its total Stiefel--Whitney class.
This formula will be essentially used in the proof of Theorem \ref{th : main - 01}.
We work in generality necessary for giving this proof.

%--------------------------------------------------
\subsection{Wreath squares}
\label{sub : Wreath squares}
%--------------------------------------------------
Let $B$ be a CW-complex, not necessarily finite.
The Cartesian square $B\times B$ is equipped with a $\Z/2$ action given by  $\omega\cdot (x,y):=(y,x)$, where $\omega$ is the generator of $\Z/2$ and $(x,y)\in B\times B$.
Then the product space $(B\times B)\times \E\Z/2$ is a free $\Z/2$-space if the diagonal action is assumed.
This means that $\omega\cdot (x,y,e):=(y,x,\omega\cdot e)$ for $(x,y,e)\in (B\times B)\times \E\Z/2$.
Consequently, the projection map
\[
p_1\colon (B\times B)\times \E\Z/2\longrightarrow (B\times B),\qquad (x,y,e)\longmapsto (x,y)
\]
is a $\Z/2$-map.
The quotient space $((B\times B)\times \E\Z/2)/(\Z/2)$ will be denoted either by $B\wr\Z/2$ or by $(B\times B)\times_{\Z/2} \E\Z/2$, and will be called the {\bf wreath square of the space} $B$.

Next, let $f\colon B_1\longrightarrow B_2$ be a continuous map.
Then the map
\[
(f\times f)\times\id \colon (B_1\times B_1)\times\E\Z/2\longrightarrow (B_2\times B_2)\times\E\Z/2
\]
is a $\Z/2$-map and consequently induces a continuous map between the wreath squares:
\[
 (f\times f)\times_{\Z/2}\id \colon (B_1\times B_1)\times_{\Z/2}\E\Z/2\longrightarrow (B_2\times B_2)\times_{\Z/2}\E\Z/2.
\]
The map $(f\times f)\times_{\Z/2}\id$ is alternatively denoted by $f\wr\Z/2$.

\medskip
Let $\xi:=(p_{\xi}\colon E(\xi)\longrightarrow B(\xi))$ be a real $n$-dimensional vector bundle over the CW-complex  $B(\xi)$ whose fiber is $F(\xi)$.
The total space of $\xi$ is $E(\xi)$, and $p_{\xi}$ is the corresponding projection map.
We consider the following pull-back $p_1^*(\xi\times\xi)$ of the product bundle vector $\xi\times\xi$:
\[
\xymatrix{
E(p_1^*(\xi\times\xi))\ar[rr]\ar[d] &  &  E(\xi\times\xi)\ar[d] \\
(B(\xi)\times B(\xi))\times \E\Z/2\ar[rr]^-{p_1} &  & B(\xi\times\xi).
}
\]
Here we recall that by definition $E(\xi\times\xi)=E(\xi)\times E(\xi)$ and $B(\xi\times\xi)=B(\xi)\times B(\xi)$, consult for example \cite[p.\,27]{Milnor1974}.
Unlike the product bundle $\xi\times\xi$ the pull-back bundle $p_1^*(\xi\times\xi)$ is equipped with a free $\Z/2$-action.
Moreover, the projection $E(p_1^*(\xi\times\xi))\longrightarrow (B(\xi)\times B(\xi))\times \E\Z/2$ is a $\Z/2$-map between free $\Z/2$-spaces.
Consequently, after taking quotient we get a $2n$-dimensional vector bundle
\[
E(p_1^*(\xi\times\xi))/(\Z/2)\longrightarrow (B(\xi)\times B(\xi))\times_{\Z/2} \E\Z/2
\]
that we denote by $\xi\wr\Z/2$ and call the {\bf wreath square of the vector bundle} $\xi$.

\medskip
The wreath square of the vector bundle behaves naturally with respect to the Whitney sum of vector bundles.
More precisely, if $\xi$ and $\eta$ are vector bundles over the same base space $B$ then
\begin{equation}
	\label{eq : Whitney sum of wreath square}
	(\xi\oplus\eta)\wr \Z/2\cong (\xi\wr\Z/2)\oplus(\eta\wr\Z/2).
\end{equation}
Indeed, for proving \eqref{eq : Whitney sum of wreath square} it suffices to exhibit a fiberwise isomorphism between the corresponding vector bundles, see \cite[Lem.\,2.3]{Milnor1974}.
The fibers of these bundles, respectively, are
\[
(F(\xi)\oplus F(\eta)) \times (F(\xi)\oplus F(\eta))
\qquad\text{and}\qquad
(F(\xi)\times F(\xi))\oplus (F(\eta)\times F(\eta)).
\]
Thus the obvious shuffling linear isomorphism gives the required isomorphism in \eqref{eq : Whitney sum of wreath square}.
%--------------------------------------------------
\subsection{Cohomology of the wreath square of a space}
\label{sub : Cohomology of the wreath square of a space}
%--------------------------------------------------
Let $B$ be a CW-complex and $(B\times B)\times_{\Z/2} \E\Z/2$ its wreath square.
The wreath square of $B$ is the total space of the following fiber bundle
\begin{equation}
	\label{eq : fib-001}
	\xymatrix{
B\times B\ar[r] & (B\times B)\times_{\Z/2}\E\Z/2\ar[r] & \B (\Z/2).
}
\end{equation}
The Serre spectral sequence associated to the fibration \eqref{eq : fib-001} has $E_2$-term given by
\begin{equation}
	\label{eq : fib-005}
	E_{2}^{i,j}:=H^i(\B(\Z/2);\mathcal{H}^j(B\times B;\F_2))\cong H^i(\Z/2;H^j(B\times B;\F_2)).
\end{equation}
The spectral sequence $\eqref{eq : fib-005}$ converges to the cohomology of the wreath square $H^*((B\times B)\times_{\Z/2}\E\Z/2;\F_2)$.
Here the local coefficient system $\mathcal{H}^*(B\times B;\F_2)$ is determined by the action of the fundamental group of the base $\pi_1(\B (\Z/2))\cong\Z/2$ on the cohomology of the fiber $H^*(B\times B;\F_2)$.
The K\"unneth formula gives a presentation of the cohomology of the fiber in the form
\[
H^*(B\times B;\F_2)\cong H^*(B;\F_2)\otimes H^*(B;\F_2),
\]
and the action of $\Z/2$ interchanges factors in the tensor product.

The $E_2$-term of the spectral sequence \eqref{eq : fib-005} can be described in more details using \cite[Cor\,IV.1.6]{Adem2004}.

\begin{proposition}
\label{lem : 02}
Let $\mathcal{B}:=\{v_k : k\in K\}$ be a basis of the $\F_2$ vector space $H^*(B;\F_2)$, where the index set $K$ is equipped with a linear order.
The $E_2$-term of the spectral sequence \eqref{eq : fib-005} can be presented as follows
\begin{eqnarray*}
	E_{2}^{i,j} &=& 	
	\begin{cases}
		H^j(B\otimes B;\F_2)^{\Z/2}, & i=0,\\
		H^{j/2}(B;\F_2),				 & i>0,\,j\text{ even},\\
		0,							 & \text{otherwise}.
	\end{cases}
\end{eqnarray*}
Furthermore, $E_{2}^{*,*}$ as a $H^*(\Z/2;\F_2)$-module decomposes into the direct sum
\[
\bigoplus_{k\in K} H^*(\Z/2;\F_2) \oplus \bigoplus_{k_1<k_2\in K} \F_2,
\]
where the action of $H^*(\Z/2;\F_2)$ on each summand of the first sum is given by the cup product, and on the each summand of the second sum is trivial.
\end{proposition}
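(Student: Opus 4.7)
The plan is to compute $E_2^{i,j}=H^i(\Z/2;H^j(B\times B;\F_2))$ by decomposing the fiber cohomology into a direct sum of two kinds of $\Z/2$-modules whose group cohomology is standard, then read off both the additive structure and the $\F_2[t]$-action summand by summand. First I apply the K\"unneth formula to identify $H^*(B\times B;\F_2)\cong H^*(B;\F_2)^{\otimes 2}$, equipped with the basis $\{v_{k_1}\otimes v_{k_2} : k_1,k_2\in K\}$. Over $\F_2$ the Koszul signs vanish, so the $\Z/2$-action is simply the unadorned swap of tensor factors $v_{k_1}\otimes v_{k_2}\longmapsto v_{k_2}\otimes v_{k_1}$.

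Next I split this $\Z/2$-module into submodules using the linear order on $K$: each diagonal class $v_k\otimes v_k$ generates a trivial summand isomorphic to $\F_2$, and for each ordered pair $k_1<k_2$ the two-dimensional subspace spanned by $v_{k_1}\otimes v_{k_2}$ and $v_{k_2}\otimes v_{k_1}$ is a copy of the regular representation $\F_2[\Z/2]$. The stated bigraded formula now follows from the two classical computations $H^*(\Z/2;\F_2)\cong\F_2[t]$ and $H^*(\Z/2;\F_2[\Z/2])\cong\F_2$ concentrated in degree zero (Shapiro's lemma, since $\F_2[\Z/2]$ is coinduced from the trivial subgroup), summed degree by degree. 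In column $i=0$ one recovers the full invariant subspace $H^j(B\otimes B;\F_2)^{\Z/2}$. In columns $i>0$ only the diagonal summands survive, contributing one copy of $\F_2$ for every $k$ with $2|v_k|=j$, which collectively form a copy of $H^{j/2}(B;\F_2)$ when $j$ is even, and vanish otherwise.

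Finally, for the $H^*(\Z/2;\F_2)$-module structure, I use that cup product with $t$ is natural in the coefficient module, so the $\Z/2$-module splitting above descends to an $\F_2[t]$-module splitting of $E_2^{*,*}$. On each diagonal piece this gives a free $\F_2[t]$-module of rank one, with generator $v_k\otimes v_k\in E_2^{0,2|v_k|}$, matching the first sum $\bigoplus_{k\in K}H^*(\Z/2;\F_2)$. On each regular-representation piece the group cohomology vanishes in positive degree, so $t$ acts trivially on the single surviving generator $v_{k_1}\otimes v_{k_2}+v_{k_2}\otimes v_{k_1}\in E_2^{0,|v_{k_1}|+|v_{k_2}|}$, matching the second sum $\bigoplus_{k_1<k_2}\F_2$. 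I expect the only real point of care is confirming that the $\Z/2$-submodule decomposition of $H^*(B\times B;\F_2)$ genuinely passes to an $\F_2[t]$-module decomposition of the $E_2$-page; this is a formal consequence of the additivity of $H^*(\Z/2;-)$ in the coefficients, but it is the step where sloppiness would cost the most.
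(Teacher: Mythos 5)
Your proof is correct and is exactly the standard argument behind the result the paper invokes by citation to Adem--Milgram, Cor.~IV.1.6: decompose $H^*(B\times B;\F_2)$ as a $\Z/2$-module into trivial diagonal summands and free off-diagonal summands, then compute group cohomology summand by summand and track the $t$-action via naturality. Since the paper supplies no independent proof and simply cites that corollary, your write-up matches the paper's intended route, just spelled out.
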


From the classical work of Nakaoka \cite{Nakaoka1961}, see also \cite[Thm.\,IV.1.7]{Adem2004}, we have that the $E_2$-term of the spectral sequence \eqref{eq : fib-005} collapses.

\begin{proposition}
	\label{th : E_2 collapses}
	Let $B$ be a CW-complex.
	The Serre spectral sequence of the fibration \eqref{eq : fib-001} collapses at the $E_2$-term, that means $E_{2}^{i,j}\cong E_{\infty}^{i,j}$ for all $(i,j)\in\Z\times\Z$.
\end{proposition}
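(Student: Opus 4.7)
The plan is to identify an explicit generating set for $E_2^{*,*}$ as an $H^*(\Z/2;\F_2)$-module, as furnished by Proposition \ref{lem : 02}, and then to prove that each of these module generators survives to $E_\infty^{*,*}$. Since the base of the Serre fibration \eqref{eq : fib-001} is $\B(\Z/2)$, every differential $\partial_r$ is a morphism of $H^*(\Z/2;\F_2)$-modules; so if every module generator is a permanent cycle, then $\partial_r=0$ for all $r\geq 2$ and the sequence collapses at $E_2$.

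By Proposition \ref{lem : 02}, the module generating set splits into two families. The first family consists of the diagonal classes $v_k\otimes v_k\in E_2^{0,2\deg v_k}$, indexed by the basis $\mathcal{B}$ of $H^*(B;\F_2)$; each of these generates a free $H^*(\Z/2;\F_2)$-submodule of $E_2^{*,*}$. To lift a class of the form $v\otimes v$, I would invoke Steenrod's extended power construction. Concretely, I classify $v\in H^n(B;\F_2)$ by a continuous map $\varphi\colon B\longrightarrow K(\F_2,n)$, apply the wreath square functor to produce
\[
\varphi\wr\Z/2\colon (B\times B)\times_{\Z/2}\E\Z/2\longrightarrow (K(\F_2,n)\times K(\F_2,n))\times_{\Z/2}\E\Z/2,
\]
and then pull back a canonical $2n$-dimensional class on the target whose fiberwise restriction is the fundamental square $\iota_n\otimes \iota_n$. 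The existence of such a class is the topological heart of Steenrod's construction of the operations $Sq^i$; it can be produced either at the cochain level via cup-$i$ products or via obstruction theory over the skeleta of $\B(\Z/2)$. The resulting class $\mathcal{P}(v)\in H^{2n}((B\times B)\times_{\Z/2}\E\Z/2;\F_2)$ restricts to $v\otimes v$ on the fiber, which via the edge homomorphism exhibits $v\otimes v$ as a permanent cycle; then $t^i\cdot(v\otimes v)$ is automatically a permanent cycle in every higher row.

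The second family of module generators consists of the symmetrized off-diagonal classes $v_{k_1}\otimes v_{k_2}+v_{k_2}\otimes v_{k_1}\in E_2^{0,*}$ for $k_1<k_2$. These I would lift using the transfer homomorphism $\trf^*$ associated with the double cover $(B\times B)\times\E\Z/2\longrightarrow (B\times B)\times_{\Z/2}\E\Z/2$. The composition of $\trf^*$ with fiberwise restriction equals the norm $1+\omega$ acting on $H^*(B\times B;\F_2)$, so $\trf^*(v_{k_1}\otimes v_{k_2})$ restricts in the fiber to $v_{k_1}\otimes v_{k_2}+v_{k_2}\otimes v_{k_1}$, supplying the desired permanent cycle. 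Once every $H^*(\Z/2;\F_2)$-module generator of $E_2^{*,*}$ has been lifted into the total space, the collapse follows. The essential step, and the principal obstacle, is the construction of $\mathcal{P}(v)$; the remainder is formal bookkeeping using module linearity of the differentials.
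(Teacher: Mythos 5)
Your argument is correct, but note that the paper itself does not prove Proposition~\ref{th : E_2 collapses}: it simply cites Nakaoka's theorem and its exposition in Adem--Milgram. So there is no ``paper's proof'' to compare against; what you have done is supply the standard proof of the cited result. Your strategy---first observe that the Serre differentials are $H^*(\Z/2;\F_2)$-linear (since $d_r$ vanishes on $E_r^{*,0}$ for $r\geq 2$), then reduce collapse to showing that the module generators furnished by Proposition~\ref{lem : 02} are permanent cycles, then lift the diagonal generators $P(v_k)=v_k\otimes v_k$ via the Steenrod extended power class $\mathcal{P}(v)\in H^{2\deg v}(B\wr\Z/2;\F_2)$ and the off-diagonal generators $Q(v_{k_1}\otimes v_{k_2})$ via the transfer of the double cover $(B\times B)\times\E\Z/2\to B\wr\Z/2$---is exactly the argument one finds in the references the paper invokes, and it is complete. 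The two genuinely non-formal inputs you isolate are the right ones: the existence of $\mathcal{P}(v)$ with fiberwise restriction $v\otimes v$ (the topological construction underlying the $Sq^i$), and the identity $\iota^*\circ\operatorname{tr} = 1 + \omega$ identifying the fiber restriction of the transfer with the norm. One small point worth making explicit in a final write-up: since no differential enters or leaves row zero, $E_r^{*,0}\cong\F_2[t]$ at every page, so the $\F_2[t]$-module structure on $E_r$ is inherited unchanged from $E_2$; with that observed, the induction on $r$ forcing $d_r=0$ for all $r\geq 2$ goes through without circularity.
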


In order to give a description of the $E_{\infty}$-term of the spectral sequence \eqref{eq : fib-005}, and therefore present the cohomology $H^*((B\times B)\times_{\Z/2}\E\Z/2;\F_2)$, we introduce the following maps.
First we consider the map (not a homomorphism)
\[
P\colon H^j(B;\F_2)\longrightarrow H^{2j}(B\times B;\F_2)^{\Z/2}\cong E^{0,2j}_2\cong E^{0,2j}_{\infty}
\]
given by $P(a):= a\otimes a$, for $a\in H^j(B;\F_2)$ and $j\in\Z$.
From direct computation we have that the map $P$ is not additive but is multiplicative.
Next we consider the map
\[
Q\colon H^{j}(B\times B;\F_2) \longrightarrow H^{j}(B\times B;\F_2)^{\Z/2}
\]
given by $Q(a\otimes b):= a\otimes b + b\otimes a$, for $a\otimes b\in H^{j}(B\times B;\F_2)$ and $j\in\Z$.
The map $Q$ is additive but not multiplicative.

Now from Propositions \ref{lem : 02} and \ref{th : E_2 collapses}, using maps $P$ and $Q$, we have that:
\begin{compactitem}
\item $E^{i,0}_2\cong E^{i,0}_{\infty}\cong H^i(\Z/2;\F_2)\cong\F_2[t]$,
\item $E^{0,j}_2 \cong E^{0,j}_{\infty} \cong  H^{j}(B\times B;\F_2)^{\Z/2}$,
\item $E^{0,j}_2\cong E^{0,j}_{\infty} \cong P(H^{j/2}(B;\F_2))\oplus Q(H^{j}(B\times B;\F_2))$ for even $j\geq 2$,
\item $E^{i,j}_2\cong E^{i,j}_{\infty}\cong P(H^{j/2}(B;\F_2))\otimes H^*(\Z/2;\F_2)$ for $j\geq 2$ even and $i\geq 1$,
\end{compactitem}
where $\deg(t)=1$.
From the previous description of the spectral sequence, multiplicative property of the map $P$, and the relation
\begin{equation}
	\label{eq : multiplication Q and t}
	Q(H^{j}(B\times B;\F_2))\cdot  t=0,
\end{equation}
we have a description of the multiplicative structure of the $E_{\infty}$-term as well as of the cohomology of the wreath square $H^*((B\times B)\times_{\Z/2}\E\Z/2;\F_2)$.
For the fact that no multiplicative extension problem arrises see for example \cite[Rem. after Thm.\,2.1]{Leary1997}.

\medskip
The next property of the cohomology of the wreath square will be essential in application of the Splitting principle.

\begin{proposition}
	\label{prop : 1-1 in cohomology of wreath product}
	Let $B_1$ and $B_2$ be CW-complexes, and let $f\colon B_1\longrightarrow B_2$ be a continuous map.
	If the induced map in cohomology $f^*\colon H^*(B_2;\F_2)\longrightarrow H^*(B_1;\F_2)$ is injective, then the induced map between cohomologies of wreath squares
\[
((f\times f)\times_{\Z/2}\id)^* \colon H^*((B_2\times B_2)\times_{\Z/2}\E\Z/2;\F_2)\longrightarrow H^*((B_1\times B_1)\times_{\Z/2}\E\Z/2;\F_2),
\]
or in different notation
\[
(f\wr\Z/2)^*\colon H^*(B_2\wr\Z/2;\F_2)\longrightarrow  H^*(B_1\wr\Z/2;\F_2),
\]
is injective.
\end{proposition}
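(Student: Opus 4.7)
The plan is to compare the Serre spectral sequences of the two wreath-square fibrations (the instance of \eqref{eq : fib-001} for $B_1$ and for $B_2$) via the morphism of fibrations induced by $f\wr\Z/2$, which covers the identity on $\B(\Z/2)$. I will show that the induced map on the $E_{\infty}$-terms is injective; because $E_{\infty}$ is the associated graded of a decreasing filtration on $H^*((B\times B)\times_{\Z/2}\E\Z/2;\F_2)$ and we are working over a field, injectivity on the associated graded will imply injectivity of $(f\wr\Z/2)^*$.

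To compare the $E_{\infty}$-terms, I will choose bases compatibly. Fix a homogeneous $\F_2$-basis $\{v_k : k\in K\}$ of $H^*(B_2;\F_2)$ and, using that $f^*$ is injective and that the coefficients form a field, extend $\{f^*(v_k) : k\in K\}$ to a homogeneous basis $\{f^*(v_k) : k\in K\}\sqcup\{u_{\ell} : \ell\in L\}$ of $H^*(B_1;\F_2)$. Equip $K$ with a linear order and extend it arbitrarily to a linear order on $K\sqcup L$. Propositions \ref{lem : 02} and \ref{th : E_2 collapses} then present both $E_{\infty}$-terms as the direct sum of free $H^*(\Z/2;\F_2)$-modules on the $P(\cdot)$-generators together with trivial $\F_2$-summands indexed by the $Q(\cdot\otimes\cdot)$-generators.

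The morphism of spectral sequences on $E_{\infty}$ is computed by naturality of the cross product, which gives $(f\times f)^*\circ P = P\circ f^*$ and $(f\times f)^*\circ Q = Q\circ (f^*\otimes f^*)$. Hence the morphism carries the generator $P(v_k)$ to $P(f^*(v_k))$, which is itself a generator of the corresponding $H^*(\Z/2;\F_2)$-summand in $E_{\infty}(B_1\wr\Z/2)$, and it carries $Q(v_{k_1}\otimes v_{k_2})$ (with $k_1 < k_2$) to $Q(f^*(v_{k_1})\otimes f^*(v_{k_2}))$, which is a basis element of the $Q$-summand since $f^*(v_{k_1})\neq f^*(v_{k_2})$ by injectivity of $f^*$. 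The fact that the base $\B(\Z/2)$ is fixed realizes the row-zero comparison $E_{\infty}^{i,0}\to E_{\infty}^{i,0}$ as the identity on $H^*(\Z/2;\F_2)$. Altogether, an $\F_2$-basis of $E_{\infty}(B_2\wr\Z/2)$ is sent to a subset of an $\F_2$-basis of $E_{\infty}(B_1\wr\Z/2)$, proving the required injectivity on associated graded.

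The main subtlety, though not a serious obstacle, is that the map $P$ fails to be additive: one has $P(a+b) = P(a)+P(b)+Q(a\otimes b)$. This is what prevents a naive ``basis-to-basis'' argument directly at the level of $H^*(B_i;\F_2)$, and it is only after decomposing $E_{\infty}$ into its $P$-part and $Q$-part via Proposition \ref{lem : 02} that the induced morphism becomes transparent. Once that decomposition is in place the verification is routine, and the final passage from injectivity on $E_{\infty}$ to injectivity of $(f\wr\Z/2)^*$ is a standard consequence of working with a filtered vector space over a field, where no additive extension problem can occur.
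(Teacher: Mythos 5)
Your argument is correct and follows essentially the same route as the paper: compare the Serre spectral sequences of the two Borel/wreath-square fibrations, note (Proposition~\ref{th : E_2 collapses}) that both collapse at $E_2$, check injectivity at the $E_2$ level using the structural description of Proposition~\ref{lem : 02}, and conclude from the field coefficients. What you add is an explicit verification of the middle step --- the paper simply asserts that Proposition~\ref{lem : 02} gives injectivity of $E_2^{i,j}(f\wr\Z/2)$ --- by extending $\{f^*(v_k)\}$ to a basis of $H^*(B_1;\F_2)$ and tracking the $P$- and $Q$-generators through $(f\times f)^*=f^*\otimes f^*$, which is a perfectly sound way to make the assertion precise. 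One small remark: the non-additivity of $P$ is less of a genuine obstacle than your closing paragraph suggests. The row $E_2^{i,j}$ for $i\geq 1$ and $j$ even is naturally $H^i(\Z/2;H^j(B\times B;\F_2))\cong (H^j(B\times B;\F_2))^{\Z/2}/\operatorname{Im}(1+\sigma)$, and on that quotient $a\mapsto[a\otimes a]$ \emph{is} linear (the cross terms $Q(a\otimes b)$ lie in $\operatorname{Im}(1+\sigma)$) and is a natural isomorphism with $H^{j/2}(B;\F_2)$. Under that identification the induced map is simply $f^*$ and injectivity follows without any basis choice; similarly on the $0$-column the map is the restriction of the injection $f^*\otimes f^*$ to $\Z/2$-invariants. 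Your basis argument is correct and does the same work, it is just a touch heavier than necessary.
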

\begin{proof}
	The induced map $f\wr\Z/2=(f\times f)\times_{\Z/2}\id$ between wreath squares defines the following morphism of fibrations
\[
\xymatrix{
 B_1\wr\Z/2\ar[rr]^-{f\wr\Z/2}\ar[d]  & &  B_1\wr\Z/2\ar[d]\\
\B (\Z/2)\ar[rr]^-{\id} & & \B (\Z/2).
}
\]
Consequently, it induces a morphism between the corresponding spectral sequences
\[
E^{i,j}_2(f\wr\Z/2)\colon
E^{i,j}_2( B_1\wr\Z/2)\longleftarrow E^{i,j}_2( B_2\wr\Z/2).
\]
From Proposition \ref{th : E_2 collapses} we have that $E^{i,j}_2(f\wr\Z/2)=E^{i,j}_{\infty}(f\wr\Z/2)$, and furthermore from Proposition \ref{lem : 02} that $E^{i,j}_2(f\wr\Z/2)$ is injective for every $i,j\in\Z$.
Since we are working over a field the absence of an additive extension problem implies that the map $(f\wr\Z/2)^*$ is also injective.
\end{proof}

%--------------------------------------------------
\subsection{The total Stiefel--Whitney class of the wreath square of a vector bundle}
%--------------------------------------------------
To every real $n$-dimensional vector bundle $\xi$ we associate the characteristic class
\[
u(\xi):=w(\xi\wr\Z/2)\in H^*(B(\xi)\wr \Z/2;\F_2),
\]
which is the total Stiefel--Whitney class of the $2n$-dimensional vector bundle $\xi\wr\Z/2$ living in the cohomology of the wreath square of the total space.
The assignement $\xi \longmapsto u(\xi)$ we just defined is natural with respect to continuous maps.
Indeed, for a continuous map $f\colon B\longrightarrow B(\xi)$ consider the following commutative diagram of vector bundle maps (fiberwise linear isomorphism):
\[
\xymatrix@C=1.9em{
& & E(p_1^*(\xi\times\xi))\ar[rrrr]\ar[dd]|!{[d]}\hole  & & & &  E(\xi\times\xi)\ar[dd]\\
E(q_1^*(f^*\xi\times f^*\xi) )\ar[urr]\ar[rrrr]\ar[dd] & &   & & E(f^*\xi\times f^*\xi)\ar[urr]\ar[dd]  & &\\
& & (B(\xi)\times B(\xi))\times\E\Z/2\ar[rrrr]|!{[rr]}\hole^{p_1} & & & &  B(\xi)\times B(\xi)\\
(B\times B)\times\E\Z/2\ar[urr]_{(f\times f)\times\id}\ar[rrrr]^{q_1} & &   & & B\times B,\ar[urr]_{f\times f}  & &
}
\]
where $q_1\colon (B\times B)\times\E\Z/2\longrightarrow B\times B$ is the projection.
After taking the quotient of the left hand side square of the diagram, with respect to the free $\Z/2$-action, we get the following vector bundle map between wreath product squares:
\[
\xymatrix{
E(q_1^*(f^*\xi\times f^*\xi) )/(\Z/2) \ar[rr]\ar[d] & &  E(p_1^*(\xi\times\xi))/(\Z/2)\ar[d]\\
B\wr\Z/2\ar[rr]^-{f\wr\Z/2} & & B(\xi)\wr\Z/2.
}
\]
In particular this means that $(f\wr\Z/2)^*(\xi\wr\Z/2)=f^*\xi\wr\Z/2 $.
Consequently,
\[
(f\wr\Z/2)^*(u(\xi))=u(f^*(\xi)).
\]

Now we are ready to give and verify the formula for computing the characteristic class $u(\xi)$.

\begin{theorem}
	\label{thm : SW classes of the wreath product}
	Let $\xi$ be a real $n$-dimensional vector bundle over a $CW$-complex.
	Then
\begin{equation}
\label{eq : the main formula}
u(\xi)	=\sum_{0\leq r< s\leq n}Q(w_r(\xi)\otimes w_s(\xi))+
\sum_{0\leq r\leq n} P(w_r(\xi))\cdot(1+t)^{n-r}
\end{equation}
\end{theorem}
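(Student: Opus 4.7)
The plan is to prove \eqref{eq : the main formula} via the splitting principle: using Proposition \ref{prop : 1-1 in cohomology of wreath product} and naturality of $u$, we reduce to the case where $\xi$ is a Whitney sum of real line bundles, then combine the identity \eqref{eq : Whitney sum of wreath square} with a direct computation of $u(\ell)$ for a line bundle, and finally expand the resulting product and match it against the right-hand side of \eqref{eq : the main formula}.

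First I would invoke the splitting principle to obtain a continuous map $f\colon B'\longrightarrow B(\xi)$ with $f^{*}$ injective on $\F_2$-cohomology and $f^{*}\xi\cong\ell_1\oplus\cdots\oplus\ell_n$ a sum of line bundles. Proposition \ref{prop : 1-1 in cohomology of wreath product} ensures that $(f\wr\Z/2)^{*}$ is also injective, and since both sides of \eqref{eq : the main formula} are built from the natural classes $w_r$ and the natural operations $P$ and $Q$, verifying \eqref{eq : the main formula} for $f^{*}\xi$ suffices. Iterating \eqref{eq : Whitney sum of wreath square} then yields $u(\ell_1\oplus\cdots\oplus\ell_n)=\prod_{i=1}^{n}u(\ell_i)$, so the task reduces to computing $u(\ell)$ for a single line bundle and verifying an algebraic identity.

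I would compute $u(\ell)$ as follows. Writing $x:=w_1(\ell)$, the bundle $\ell\wr\Z/2$ has rank $2$, so $u(\ell)=1+w_1(\ell\wr\Z/2)+w_2(\ell\wr\Z/2)$. Restriction to a typical fiber $B\times B$ of the Borel fibration \eqref{eq : fib-001} identifies $\ell\wr\Z/2|_{B\times B}$ with $\ell\times\ell$ of total class $(1+x_1)(1+x_2)$, pinning down the coefficients of the $Q(1\otimes x)$- and $P(x)$-summands. Restriction to a $\Z/2$-fixed basepoint identifies $\ell\wr\Z/2$ with the regular representation bundle of $\Z/2$ over $\B(\Z/2)$ of total class $1+t$, pinning down the coefficients of $t$ and $t^2$. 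Combining these with the description of $H^{*}(B\wr\Z/2;\F_2)$ provided by Propositions \ref{lem : 02} and \ref{th : E_2 collapses} yields
\[
u(\ell)=1+t+Q(1\otimes x)+P(x)=(1+x)\otimes(1+x)+t.
\]

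The remaining algebraic identity to verify is
\[
\prod_{i=1}^{n}\bigl((1+x_i)\otimes(1+x_i)+t\bigr)=\sum_{0\leq r<s\leq n}Q(w_r\otimes w_s)+\sum_{0\leq r\leq n}P(w_r)(1+t)^{n-r},
\]
with $x_i:=w_1(\ell_i)$ and $w_r=e_r(x_1,\ldots,x_n)$. Expanding the left-hand side as $\sum_{S\subseteq\{1,\ldots,n\}}t^{|S|}\prod_{i\notin S}((1+x_i)\otimes(1+x_i))$, the $S=\emptyset$ contribution equals $w(\xi)\otimes w(\xi)=\sum_{r}P(w_r)+\sum_{r<s}Q(w_r\otimes w_s)$, matching the $t^0$-part of the right-hand side. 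For $|S|\geq 1$ the relation \eqref{eq : multiplication Q and t} kills all $Q$-parts and leaves $t^{|S|}P(\prod_{i\notin S}(1+x_i))$; a binomial count gives $\sum_{|S|=k}\prod_{i\notin S}(1+x_i)=\sum_{r}\binom{n-r}{k}w_r$, and since $\F_2$-squaring is additive, $P$ is additive modulo $Q$, so the $t^k$-contribution reduces to $\sum_{r}\binom{n-r}{k}P(w_r)\,t^{k}$. Summing over $k\geq 1$ and recognising $\sum_{k\geq 0}\binom{n-r}{k}t^k=(1+t)^{n-r}$ reassembles the right-hand side. The principal obstacle is this final step: one must exploit precisely that the non-additivity of $P$ lives entirely in the $Q$-part, which is annihilated by $t$ via \eqref{eq : multiplication Q and t}, so that $P$ behaves additively whenever multiplied by a positive power of $t$.
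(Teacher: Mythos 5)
Your proof is correct and follows the paper's overall strategy: reduce to a Whitney sum of line bundles via the splitting principle and Proposition~\ref{prop : 1-1 in cohomology of wreath product}, compute $u(\ell)=(1+x)\otimes(1+x)+t$ for a line bundle $\ell$ by restricting to the fiber $B\times B$ and to a basepoint, then verify the algebraic identity
\[
\prod_{i=1}^{n}\bigl((1+x_i)\otimes(1+x_i)+t\bigr)
=\sum_{0\le r<s\le n}Q(w_r\otimes w_s)+\sum_{0\le r\le n}P(w_r)(1+t)^{n-r}.
\]
Where you diverge is in the verification of this last identity. The paper proves it by induction on $n$, working out the $n=2$ case in full detail and then running an inductive step which is stated but not spelled out. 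You instead expand the product directly over subsets $S\subseteq\{1,\dots,n\}$, isolate the $S=\emptyset$ term as $w(\xi)\otimes w(\xi)$, observe that for $|S|=k\ge 1$ the relation $t\cdot Q(\cdot)=0$ forces $P$ to behave additively, and then use the count $\sum_{|S|=k}\prod_{i\notin S}(1+x_i)=\sum_{r}\binom{n-r}{k}w_r$ to reassemble $(1+t)^{n-r}$. This is a closed-form, non-inductive argument; it makes transparent exactly why the binomial coefficients $\binom{n-r}{k}$ appear and why the $Q$-terms only survive in the degree-zero slice in $t$, whereas the paper's induction pushes that bookkeeping into an (omitted) calculation. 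Your route is arguably cleaner; what the paper's version buys is an explicit worked example ($n=2$) that doubles as a sanity check for the formula. One small point you leave implicit but which is needed: in the line-bundle computation, the two restrictions pin down only the $E^{0,*}$- and $E^{*,0}$-components of $u(\ell)$, so one must also note that the only potential middle term, $E_2^{1,1}$, vanishes by Proposition~\ref{lem : 02}; the paper states this explicitly.
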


The proof of Theorem \ref{thm : SW classes of the wreath product} will be conducted using a modification of the Splitting principle.
Thus we first give a proof in the special case of a line bundle.

\begin{proposition}
	\label{prop : the case of line bundle}
Let $\xi$ be a real $1$-dimensional vector bundle over a $CW$-complex.
	Then
\begin{multline*}
\label{eq : the main formula}
u(\xi)	= Q(w_0(\xi)\otimes w_1(\xi))+P(w_0(\xi))\cdot(1+t)+P(w_1(\xi))=\\
1+\big(t+ 1\otimes w_1(\xi) + w_1(\xi)\otimes 1 \big)+w_1(\xi)\otimes w_1(\xi).
\end{multline*}
\end{proposition}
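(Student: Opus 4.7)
The plan is to verify the identity in $H^*(B(\xi)\wr\Z/2;\F_2)$ by pulling back along two natural maps whose combined pullback is injective. The two maps I intend to use are the double cover $\pi\colon B(\xi)\times B(\xi)\to B(\xi)\wr\Z/2$, arising from the projection $(B(\xi)\times B(\xi))\times\E\Z/2 \to (B(\xi)\times B(\xi))\times_{\Z/2}\E\Z/2$ together with the contractibility of $\E\Z/2$, and the Borelized diagonal $\Delta\colon B(\xi)\times\B(\Z/2)\to B(\xi)\wr\Z/2$ induced by the $\Z/2$-equivariant diagonal $B(\xi)\hookrightarrow B(\xi)\times B(\xi)$ with trivial $\Z/2$-action on the source. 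My first step would be to show that $(\pi^*,\Delta^*)$ is injective on $H^*(B(\xi)\wr\Z/2;\F_2)$, using the Nakaoka additive description of Propositions \ref{lem : 02} and \ref{th : E_2 collapses}. On the Nakaoka basis $\pi^*$ sends $t\mapsto 0$, $P(v)\mapsto v\otimes v$, and $Q(v\otimes w)\mapsto v\otimes w+w\otimes v$, so it injects the $E^{0,*}_\infty$ summand while killing positive $t$-filtration; whereas $\Delta^*$ annihilates every $Q$-class (these are $t$-torsion by \eqref{eq : multiplication Q and t}, while $H^*(B(\xi)\times\B(\Z/2);\F_2)\cong H^*(B(\xi);\F_2)[t]$ is $t$-torsion-free) and acts triangularly on $P(v)\cdot t^i$ via the classical Steenrod total power formula $\Delta^*(P(a))=\sum_{j=0}^{|a|}\mathrm{Sq}^{j}(a)\cdot t^{|a|-j}$; the two kernels then intersect trivially.

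Next I would identify the two pullback bundles explicitly. The double cover gives $\pi^*(\xi\wr\Z/2)=\xi\times\xi$, whose total Stiefel--Whitney class is $(1+w_1(\xi)\otimes 1)(1+1\otimes w_1(\xi))$. For the Borelized diagonal, the pullback of $\xi\times\xi$ along the diagonal is $\xi\oplus\xi$ carrying the $\Z/2$-action that swaps the summands; since the representation $\R^2$ with swap splits as trivial $\oplus$ sign, the Borel construction yields $\Delta^*(\xi\wr\Z/2)\cong\xi\oplus(\xi\otimes\ell)$, where $\ell$ is the real line bundle over $\B(\Z/2)$ associated to the sign representation. Its total Stiefel--Whitney class is $(1+w_1(\xi))(1+w_1(\xi)+t)$.

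I would then pull back the proposed right-hand side $1+t+Q(1\otimes w_1(\xi))+P(w_1(\xi))$ via each map. Under $\pi^*$ it becomes $1+0+(1\otimes w_1(\xi)+w_1(\xi)\otimes 1)+w_1(\xi)\otimes w_1(\xi)=(1+w_1(\xi)\otimes 1)(1+1\otimes w_1(\xi))$, which matches $w(\pi^*(\xi\wr\Z/2))$; under $\Delta^*$ it becomes $1+t+0+(w_1(\xi)\cdot t+w_1(\xi)^2)=(1+w_1(\xi))(1+w_1(\xi)+t)$, which matches $w(\Delta^*(\xi\wr\Z/2))$. By the injectivity of $(\pi^*,\Delta^*)$ established in the first step, these two matching identities together force the claimed formula. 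The principal obstacle is the injectivity claim via the Nakaoka decomposition, together with the careful invocation of the Steenrod total power formula in the $\Delta^*$ computation of $P(w_1(\xi))$; everything else is a routine comparison of total Stiefel--Whitney classes.
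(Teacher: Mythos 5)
Your proposal is correct, but it takes a genuinely different route from the paper. Both proofs begin with the double cover $\pi\colon B(\xi)\times B(\xi)\to B(\xi)\wr\Z/2$ (the paper phrases this as $p_2\colon (B(\xi)\times B(\xi))\times\E\Z/2\to B(\xi)\wr\Z/2$, which is the same computation after contracting $\E\Z/2$) and both identify $\pi^*(\xi\wr\Z/2)$ with $\xi\times\xi$. Where they diverge is the choice of a second map to pin down the $t$-filtration-positive part of $u(\xi)$. The paper restricts to a point $\{b\}\hookrightarrow B(\xi)$, reads off $(h\wr\Z/2)^*(u(\xi))=1+t$ by splitting $\xi|_{\{b\}}\wr\Z/2$ into trivial plus sign, and then closes the argument by the low-degree observation that $E_2^{1,1}=0$ in the spectral sequence \eqref{eq : fib-005}, so that $(p_2^*,(h\wr\Z/2)^*)$ is jointly injective in total degrees $\leq 2$. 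You instead use the full Borelized diagonal $\Delta\colon B(\xi)\times\B(\Z/2)\to B(\xi)\wr\Z/2$, of which the paper's point restriction is the further restriction to $\{b\}\times\B(\Z/2)$, and you correctly identify $\Delta^*(\xi\wr\Z/2)\cong\xi\oplus(\xi\otimes\ell)$. Your injectivity claim for $(\pi^*,\Delta^*)$ is stronger — it holds on all of $H^*(B(\xi)\wr\Z/2;\F_2)$, not just in low degree — and the triangularity argument you sketch (kernel of $\pi^*$ is spanned by $P(v)\,t^i$ with $i\geq 1$; the top $t$-degree term of $\Delta^*(P(v)\,t^i)$ is $v\,t^{|v|+i}$ and this is monic after ordering by $|v|$ within a fixed total degree) is sound. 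The cost is the invocation of the Steenrod total power formula $\Delta^*(P(a))=\sum_{j}\mathrm{Sq}^j(a)\,t^{|a|-j}$, a classical but nontrivial input that the paper deliberately avoids, getting by with the ad hoc vanishing $E_2^{1,1}=0$. What your route buys is a clean, degree-independent detection lemma for $H^*(B\wr\Z/2;\F_2)$ that would in fact allow one to prove Theorem \ref{thm : SW classes of the wreath product} directly without the splitting principle; the paper instead keeps the line-bundle case as elementary as possible and relegates the general case to Proposition \ref{prop : splitting map}.
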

\begin{proof}
First, consider the following maps of vector bundles:
\[
\xymatrix@1{
E(\xi\wr\Z/2)\ar@{=}[r] & E(p_1^*(\xi\times\xi))/(\Z/2)\ar[d] & & E(p_1^*(\xi\times\xi))\ar[ll]\ar[d]\ar[rr] & & E(\xi)\times E(\xi)\ar[d]\\
B(\xi)\wr\Z/2\ar@{=}[r] &(B(\xi)\times B(\xi))\times_{\Z/2} \E\Z/2 & &(B(\xi)\times B(\xi))\times \E\Z/2\ar[ll]_-{p_2}\ar[rr]^-{p_1} & & B(\xi)\times B(\xi).
}
\]
The naturality property of Stiefel--Whitney classes \cite[Ax.\,2, p.\,37]{Milnor1974} in combination with formula \cite[Prob.\,4-A, p.\,54]{Milnor1974} implies that
\begin{multline}
\label{rel-01}
p_2^*(u(\xi))=w(p_1^*(\xi\times\xi))=p_1^*(w(\xi\times\xi))=
p_1^*\big(1\times 1+(1\times w_1(\xi)+w_1(\xi)\times 1)+w_1(\xi)\times w_1(\xi)\big)	=\hfill\\
p_1^*\big(1\otimes 1+(1\otimes w_1(\xi)+w_1(\xi)\otimes 1)+w_1(\xi)\otimes w_1(\xi)\big),	
\end{multline}
where we silently use the Eilenberg--Zilber isomorphism \cite[Th.\,VI.3.2]{Bredon1997}.
Since $p_1$ is the projection we furthermore have that
\begin{multline*}
p_1^*\big(1\otimes 1+(1\otimes w_1(\xi)+w_1(\xi)\otimes 1)+w_1(\xi)\otimes w_1(\xi)\big)=\hfill\\
1\otimes 1\otimes 1+(1\otimes w_1(\xi)\otimes 1+w_1(\xi)\otimes 1\otimes 1)+w_1(\xi)\otimes w_1(\xi)\otimes 1,
\end{multline*}
and consequently
\[
p_2^*(u(\xi))=1\otimes 1\otimes 1+(1\otimes w_1(\xi)\otimes 1+w_1(\xi)\otimes 1\otimes 1)+w_1(\xi)\otimes w_1(\xi)\otimes 1.
\]

\smallskip
Next consider an arbitrary point $b\in\B(\xi)$ and the inclusion map $h\colon \{b\}\longrightarrow B(\xi)$ and  the induced map between wreath squares $h\wr\Z/2\colon \{b\}\wr\Z/2\longrightarrow \B(\xi)\wr\Z/2$.
Then we have a map of vector bundles going from $\xi|_{\{b\}}\wr\Z/2$ to $\xi\wr\Z/2$, that is
\[
\xymatrix{
E(\xi|_{\{b\}}\wr\Z/2)\ar[rr]\ar[d]& & E(\xi\wr\Z/2)\ar[d]\\
B(\xi|_{\{b\}}\wr\Z/2)\ar[rr]^-{h\wr\Z/2}& & B(\xi\wr\Z/2) ,
}
\]
where $B(\xi\wr\Z/2)=(B(\xi)\times B(\xi))\times_{\Z/2} \E\Z/2$ and $E(\xi\wr\Z/2)=E(p_1^*(\xi\times\xi))/(\Z/2)$.
Consequently,
\[
(h\wr\Z/2)^*(u(\xi))=w(\xi|_{\{b\}}\wr\Z/2).
\]
Since $B(\xi|_{\{b\}}\wr\Z/2)\cong(\E\Z/2)/(\Z/2)\cong\B (\Z/2)$ by direct inspection we see that $\xi|_{\{b\}}\wr\Z/2$ decomposes into a sum of a trivial line and non-trivial line vector bundle.
More precisely we can explicitly compute that $w(\xi|_{\{b\}}\wr\Z/2)=1+t$.
Therefore
\begin{equation}
\label{rel-02}
(h\wr\Z/2)^*(u(\xi))=1+t.	
\end{equation}

\smallskip
We have already discuss how the ambient cohomology $H^*(B(\xi)\wr \Z/2;\F_2)$ for the characteristic class $u(\xi)$ can be computed from the spectral sequence $\eqref{eq : fib-005}$.
In particular, we can interpret that
\[
u(\xi)\in E_2^{0,0}\oplus E_2^{1,0}\oplus E_2^{0,1}\oplus E_2^{2,0}\oplus E_2^{1,1}\oplus E_2^{0,2}\cong
E_{\infty}^{0,0}\oplus E_{\infty}^{1,0}\oplus E_{\infty}^{0,1}\oplus E_{\infty}^{2,0}\oplus E_{\infty}^{1,1}\oplus E_{\infty}^{0,2}.
\]
Now from relations \eqref{rel-01} and \eqref{rel-02}, and the fact that $E_2^{1,1}=0$, we get that
 \[
 u(\xi)=1+\big(t+ 1\otimes w_1(\xi) + w_1(\xi)\otimes 1 \big)+w_1(\xi)\otimes w_1(\xi),
 \]
 concluding the proof of the proposition.
\end{proof}

In order to prove the general case of Theorem \ref{thm : SW classes of the wreath product} we use the general idea of the Splitting principle.
For that we first introduce a notion of a splitting map of a vector bundle.

Let $\xi$ be a vector bundle over the base space $B(\xi)$.
A continuous map $f\colon B\longrightarrow B(\xi)$ from a space $B$ to the base space of $\xi$ is called a {\bf splitting map} if the pull-back bundle $f^*\xi$ is isomorphic to a Whitney sum of line bundles and the induced homomorphism $f^*\colon H^*(B(\xi);\F_2)\longrightarrow H^*(B;\F_2)$ is a monomorphism.
The key property that we use is the following classical fact, see for example \cite[Prop.\,17.5.2]{Husemoller1994}.

\begin{proposition}
	\label{prop : splitting map}
	For any vector bundle $\xi$ there exists a splitting map.
\end{proposition}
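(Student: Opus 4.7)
The plan is to follow the classical construction via iterated projectivization of the bundle. Given an $n$-dimensional vector bundle $\xi$ over $B(\xi)$, I would first form the associated projective bundle $\pi\colon P(\xi)\longrightarrow B(\xi)$ whose fiber is $\mathbb{R}P^{n-1}$. The pullback $\pi^*\xi$ carries a canonical line subbundle $L\subseteq \pi^*\xi$, namely the tautological line bundle whose fiber over $[v]\in P(\xi)$ is the line $\R\cdot v$. Choosing a Euclidean metric on $\xi$ (possible since the base is a CW-complex, so paracompact), the orthogonal complement of $L$ inside $\pi^*\xi$ produces an $(n-1)$-dimensional vector bundle $\eta$ on $P(\xi)$ together with a splitting
\[
\pi^*\xi\cong L\oplus\eta.
\]

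The next step is to verify that $\pi^*\colon H^*(B(\xi);\F_2)\longrightarrow H^*(P(\xi);\F_2)$ is injective. This follows from the Leray--Hirsch theorem applied to the fibration $\mathbb{R}P^{n-1}\to P(\xi)\to B(\xi)$: the class $c:=w_1(L)\in H^1(P(\xi);\F_2)$ restricts on every fiber $\mathbb{R}P^{n-1}$ to the generator of its mod~$2$ cohomology, so the classes $1,c,c^2,\ldots,c^{n-1}$ restrict to a fiberwise basis. Consequently $H^*(P(\xi);\F_2)$ is a free $H^*(B(\xi);\F_2)$-module (via $\pi^*$) with basis $1,c,\ldots,c^{n-1}$, which in particular makes $\pi^*$ a monomorphism.

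Now I would iterate: since $\pi^*\xi$ splits off a line bundle on $P(\xi)$, I apply the same construction to $\eta$, obtaining $\pi_\eta\colon P(\eta)\to P(\xi)$ along which $\pi_\eta^*\eta\cong L'\oplus\eta'$ with $L'$ a line bundle, and with $\pi_\eta^*$ injective on mod~$2$ cohomology. After $n-1$ such steps I obtain a tower
\[
B:=P_{n-1}\xrightarrow{\;\pi_{n-1}\;} P_{n-2}\xrightarrow{\;\pi_{n-2}\;}\cdots\xrightarrow{\;\pi_2\;} P_1=P(\xi)\xrightarrow{\;\pi\;}B(\xi)
\]
whose composition $f:=\pi\circ\pi_2\circ\cdots\circ\pi_{n-1}\colon B\longrightarrow B(\xi)$ satisfies $f^*\xi\cong L_1\oplus L_2\oplus\cdots\oplus L_n$, a Whitney sum of line bundles, and $f^*\colon H^*(B(\xi);\F_2)\longrightarrow H^*(B;\F_2)$ is injective as a composition of injections. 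Hence $f$ is a splitting map.

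The only mildly technical point is the Leray--Hirsch argument, since here the base is only a general CW-complex (possibly infinite). One handles this by noting that the projective bundle is locally trivial with fiber a nice compact space, so Leray--Hirsch applies (alternatively, by a colimit argument reducing to the finite-dimensional skeleta, where the statement is standard). Everything else is a routine iteration, and no further obstacle arises.
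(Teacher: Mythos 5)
Your proof is correct and is the standard iterated projectivization argument for the splitting principle. The paper does not actually present a proof of this proposition, instead deferring to Husemoller \cite[Prop.\,17.5.2]{Husemoller1994}, where essentially this construction (via the flag or projective bundle, with Leray--Hirsch providing injectivity in mod~$2$ cohomology) is carried out.
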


Now we utilize several facts that we established to give the proof of Theorem \ref{thm : SW classes of the wreath product}.

\begin{proof}[Proof of Theorem \ref{thm : SW classes of the wreath product}]
Let $\xi$ be a real $n$-dimensional vector bundle over a $CW$-complex $B(\xi)$.
Then according to Proposition \ref{prop : splitting map} there exists a splitting map $f\colon B\longrightarrow B(\xi)$.
Hence, $f^*\xi\cong \alpha_1\oplus\cdots\oplus\alpha_n$, where $\alpha_1,\ldots,\alpha_n$ are line bundles over $B$.
Moreover, $f^*\colon H^*(B(\xi);\F_2)\longrightarrow H^*(B;\F_2)$ is a monomorphism that sends, due to naturality and the Whitney sum formula,  Stiefel--Whitney classes of $\xi$ into elementary symmetric polynomials in ``variables'' $w_1(\alpha_1),\ldots,w_1(\alpha_n) $, that is
\[
w_k(f^*\xi)=\sigma_k(w_1(\alpha_1),\ldots, w_1(\alpha_n)), \qquad 1\leq k \leq n,
\]
where $\sigma_k(x_1,\ldots,x_n)=\sum_{1\leq i_1<i_2<\cdots<i_k\leq n}x_{i_1}\cdots x_{i_k}$ denotes the $k$-th elementary symmetric polynomials in variables $x_1,\ldots,x_n$.

Now, the isomorphism of vector bundles \eqref{eq : Whitney sum of wreath square} yields that
\[
(f^*\xi)\wr\Z/2\cong (\alpha_1\oplus\cdots\oplus\alpha_n)\wr\Z/2\cong (\alpha_1\wr\Z/2)\oplus\cdots\oplus (\alpha_n\wr\Z/2).
\]
Consequently,
\begin{multline*}
u(f^*\xi)=w((f^*\xi)\wr\Z/2)=w((\alpha_1\wr\Z/2)\oplus\cdots\oplus (\alpha_n\wr\Z/2))\hfill \\
=w(\alpha_1\wr\Z/2)\cdots w(\alpha_n\wr\Z/2)	=u(\alpha_1)\cdots u(\alpha_n).
\end{multline*}
Next, Proposition \ref{prop : the case of line bundle} implies that for all $1\leq k\leq n$:
\[
u(\alpha_k)=1+\big(t+ 1\otimes w_1(\alpha_k) + w_1(\alpha_k)\otimes 1 \big)+w_1(\alpha_k)\otimes w_1(\alpha_k).
\]
Combining last two equalities get that
\begin{multline}
\label{equality-00}
u(f^*\xi)=\prod_{k=1}^n\Big(1+\big(t+ 1\otimes w_1(\alpha_k) + w_1(\alpha_k)\otimes 1 \big)+w_1(\alpha_k)\otimes w_1(\alpha_k)\Big)=\\
\prod_{k=1}^n\Big(1+\big(t+ Q(1\otimes w_1(\alpha_k) )\big)+P(w_1(\alpha_k))\Big).
\end{multline}

Now we prove the equality
\begin{multline}
\label{equality-01}
\prod_{k=1}^n\Big(1+\big(t+ Q(1\otimes w_1(\alpha_k) )\big)+P(w_1(\alpha_k))\Big)=\\
\sum_{0\leq r< s\leq n}Q(w_r(f^*\xi)\otimes w_s(f^*\xi))+
\sum_{0\leq r\leq n} P(w_r(f^*\xi))\cdot(1+t)^{n-r}
\end{multline}
applying an induction on $n$.
It is important to recall that $w_k(f^*\xi)=\sigma_k(w_1(\alpha_1),\ldots, w_1(\alpha_n))$ for all $1\leq k \leq n$, meaning that we want to prove
\begin{multline}
\label{equality-012}
\prod_{k=1}^n\Big(1+\big(t+ Q(1\otimes w_1(\alpha_k) )\big)+P(w_1(\alpha_k))\Big)=\\
\sum_{0\leq r< s\leq n}Q(\sigma_r(w_1(\alpha_1),\ldots, w_1(\alpha_n))\otimes \sigma_s(w_1(\alpha_1),\ldots, w_1(\alpha_n)))+\\
\sum_{0\leq r\leq n} P(\sigma_r(w_1(\alpha_1),\ldots, w_1(\alpha_n)))\cdot(1+t)^{n-r}.
\end{multline}
The induction basis $n=1$ is holds since both sides of~\eqref{equality-012} are identical for $n=1$.
For clarity of the induction proof we first present the proof of case $n=2$.
having in mind that $P$ is multiplicative, $Q$ is additive and $t\cdot Q(\cdot)=0$ we have
{\small
\begin{multline}
\label{equality-02}
\Big(1+\big(t+ Q(1\otimes w_1(\alpha_1) )\big)+P(w_1(\alpha_1))\Big)\Big(1+\big(t+ Q(1\otimes w_1(\alpha_2) )\big)+P(w_1(\alpha_2))\Big)=\hfill \\
(1+t)^2+Q(1\otimes w_1(\alpha_2))+(1+t)P(w_1(\alpha_2))+Q(1\otimes w_1(\alpha_1))+Q(1\otimes w_1(\alpha_1))Q(1\otimes w_1(\alpha_2))+\\
Q(1\otimes w_1(\alpha_1))P(w_1(\alpha_2))+(1+t)P(w_1(\alpha_1))+P(w_1(\alpha_1))Q(1\otimes w_1(\alpha_2))+P(w_1(\alpha_1))P(w_1(\alpha_2)).
\end{multline}}
In order to make computation more transparent we separately evaluate different pieces of the last sum separately.
The first piece is:
{\small
\begin{multline*}
(1+t)\big(P(w_1(\alpha_1))+P(w_1(\alpha_2))\big)=
(1+t)\big(P(w_1(\alpha_1)+w_1(\alpha_2))+Q(w_1(\alpha_1)\otimes w_1(\alpha_2)) \big)=\\
(1+t)P(w_1(\alpha_1)+w_1(\alpha_2))+Q(w_1(\alpha_1)\otimes w_1(\alpha_2))=
(1+t)P(w_1(f^*\xi))+Q(w_1(\alpha_1)\otimes w_1(\alpha_2)).
\end{multline*}}
The next piece is:
{\small
\begin{multline*}
Q(1\otimes w_1(\alpha_1))Q(1\otimes w_1(\alpha_2))=
(1\otimes w_1(\alpha_1)+ w_1(\alpha_1)\otimes 1)(1\otimes w_1(\alpha_2)+ w_1(\alpha_2)\otimes 1)=\hfill\\
\qquad\qquad\qquad 1\otimes w_1(\alpha_1)w_1(\alpha_2)+w_1(\alpha_1)\otimes w_1(\alpha_2)+w_1(\alpha_2)\otimes w_1(\alpha_1)+
w_1(\alpha_1)w_1(\alpha_2)\otimes 1=\hfill\\
Q(1\otimes w_1(\alpha_1)w_1(\alpha_2))+Q(w_1(\alpha_1)\otimes w_1(\alpha_2))=
Q(1\otimes w_2(f^*\xi))+Q(w_1(\alpha_1)\otimes w_1(\alpha_2)).
\end{multline*}}
In the third piece we compute
{\small
\begin{multline*}
Q(1\otimes w_1(\alpha_1))P(w_1(\alpha_2))+ P(w_1(\alpha_1))Q(1\otimes w_1(\alpha_2))=\hfill\\
(1\otimes w_1(\alpha_1)+w_1(\alpha_1)\otimes 1)(w_1(\alpha_2)\otimes w_1(\alpha_2))+(w_1(\alpha_1)\otimes w_1(\alpha_1))(1\otimes w_1(\alpha_2)+w_1(\alpha_2)\otimes 1)=\\
w_1(\alpha_2)\otimes w_1(\alpha_1)w_1(\alpha_2)+w_1(\alpha_1)w_1(\alpha_2)\otimes w_1(\alpha_2)+
w_1(\alpha_1)\otimes w_1(\alpha_1)w_1(\alpha_2)+w_1(\alpha_1)w_1(\alpha_2)\otimes w_1(\alpha_1)=\\
(w_1(\alpha_1)+w_1(\alpha_2))\otimes w_1(\alpha_1)w_1(\alpha_2)+
w_1(\alpha_1)w_1(\alpha_2)\otimes (w_1(\alpha_1)+w_1(\alpha_2))=
Q(w_1(f^*\xi)\otimes w_2(f^*\xi)).
\end{multline*}}
The final piece yields:
{\small
\begin{multline*}
(1+t)^2+Q(1\otimes w_1(\alpha_2))+Q(1\otimes w_1(\alpha_1))+P(w_1(\alpha_1))P(w_1(\alpha_2))=\hfill\\
(1+t)^2+Q(1\otimes (w_1(\alpha_2)+w_1(\alpha_1)))+P(w_1(\alpha_1)w_1(\alpha_2))=
(1+t)^2+Q(1\otimes w_1(f^*\xi))+P(w_2(f^*\xi)).
\end{multline*}
}

\noindent
Gathering all these calculations together in \eqref{equality-02} we have that:
{\small
\begin{multline*}
\Big(1+\big(t+ Q(1\otimes w_1(\alpha_1) )\big)+P(w_1(\alpha_1))\Big)\Big(1+\big(t+ Q(1\otimes w_1(\alpha_2) )\big)+P(w_1(\alpha_2))\Big)=\hfill \\
\qquad
Q(1\otimes w_1(f^*\xi))+Q(1\otimes w_2(f^*\xi))+Q(w_1(f^*\xi)\otimes w_2(f^*\xi))+P(w_2(f^*\xi))+(1+t)P(w_1(f^*\xi))+(1+t)^2.
\end{multline*}}
This concludes the proof of the case $n=2$.
Now let us assume that the equalities \eqref{equality-01} and \eqref{equality-012} hold for all $k\leq n-1$.
Using the induction hypothesis we calculate as follows:
{\small
\begin{multline*}
\prod_{k=1}^n\Big(1+\big(t+ Q(1\otimes w_1(\alpha_k) )\big)+P(w_1(\alpha_k))\Big)= \hspace{70pt}\hfill\\
\Big(1+\big(t+ Q(1\otimes w_1(\alpha_1) )\big)+P(w_1(\alpha_1))\Big)\prod_{k=2}^n\Big(1+\big(t+ Q(1\otimes w_1(\alpha_k) )\big)+P(w_1(\alpha_k))\Big)=\hspace{90pt}\\
\Big(1+\big(t+ Q(1\otimes w_1(\alpha_1) )\big)+P(w_1(\alpha_1))\Big)
\Big(
\sum_{0\leq r< s\leq n-1}Q(\sigma_r(w_1(\alpha_2),\ldots, w_1(\alpha_n))\otimes \sigma_s(w_1(\alpha_2),\ldots, w_1(\alpha_n)))+ \\
\sum_{0\leq r\leq n-1} P(\sigma_r(w_1(\alpha_2),\ldots, w_1(\alpha_n)))\cdot(1+t)^{n-1-r}
\Big) .
\end{multline*}}
By a direct calculation we get that
{\small
\begin{multline*}
\prod_{k=1}^n\Big(1+\big(t+ Q(1\otimes w_1(\alpha_k) )\big)+P(w_1(\alpha_k))\Big)=\\
\sum_{0\leq r< s\leq n}Q(\sigma_r(w_1(\alpha_1),\ldots, w_1(\alpha_n))\otimes \sigma_s(w_1(\alpha_1),\ldots, w_1(\alpha_n)))+\\
\sum_{0\leq r\leq n} P(\sigma_r(w_1(\alpha_1),\ldots, w_1(\alpha_n)))\cdot(1+t)^{n-r},
\end{multline*}}
which concludes the induction proof.

Finally, from equalities \eqref{equality-00} and \eqref{equality-01} we have that
\[
u(f^*\xi)=
\sum_{0\leq r< s\leq n}Q(w_r(f^*\xi)\otimes w_s(f^*\xi))+
\sum_{0\leq r\leq n} P(w_r(f^*\xi))\cdot(1+t)^{n-r}.
\]
Furthermore, the naturally of the wreath product construction implies that
\begin{multline}
	\label{equality-004}
(f\wr\Z/2)^*(u(\xi))=u(f^*\xi)=\\
(f\wr\Z/2)^*\Big(
\sum_{0\leq r< s\leq n}Q(w_r(\xi)\otimes w_s(\xi))+
\sum_{0\leq r\leq n} P(w_r(\xi))\cdot(1+t)^{n-r}
\Big).
\end{multline}
Since $f$ is a splitting map the induced map in cohomology $f^*\colon H^*(B(\xi);\F_2)\longrightarrow H^*(B;\F_2)$ is an injection.
Consequently, according to Proposition \ref{prop : 1-1 in cohomology of wreath product}, the induced map between wreath squares
\[
f\wr\Z/2\colon  B\wr\Z/2\longrightarrow  B(\xi)\wr\Z/2
\]
induces a monomorphism in cohomology.
Hence equality \eqref{equality-004} implies that
\[
u(\xi)=\sum_{0\leq r< s\leq n}Q(w_r(\xi)\otimes w_s(\xi))+
\sum_{0\leq r\leq n} P(w_r(\xi))\cdot(1+t)^{n-r},
\]
concluding the proof of theorem.
\end{proof}

%%%%%%%%%%%%%%%%%%%%%%%%%%%%%%%%%%%%%%%%%%%%%%%%%%%%%%%%%%%%%%%%%%%%%%%%%%%%%%%%%%%%%
\section{The index of the oriented Grassmann manifold }
\label{sec : proof of the main theorem - 01}
%%%%%%%%%%%%%%%%%%%%%%%%%%%%%%%%%%%%%%%%%%%%%%%%%%%%%%%%%%%%%%%%%%%%%%%%%%%%%%%%%%%%%

In this section we utilize the definition of the oriented Grassmann manifold as a homogeneous space to obtain an alternative description of the index that will further on allow us to give a complete proof of Theorem \ref{th : main - 01} in Section \ref{sec : proofs }.
In particular, at the end of this section we prove Theorem \ref{th : main - 01} in the case when $n=4$ and  $n=6$.

For this section we assume that $n\geq 2$ is an even integer.
Consequently there exist unique integers $a\geq 1$ and $b\geq 0$ such that $n=2^a(2b+1)$.

%--------------------------------------------------
\subsection{A useful description of the index of the oriented Grassmann manifold}
\label{sub : A useful description of the index of the oriented Grassmann manifold}
%--------------------------------------------------
The real oriented Grassmann manifold  $\widetilde{G}_{n}(\R^{2n})$ can be defined by
\[
\widetilde{G}_{n}(\R^{2n}):=\SO(2n)/(\SO(n)\times\SO(n)).
\]
The $\Z/2=\langle\omega\rangle$ action on $\widetilde{G}_{n}(\R^{2n})$ we consider is defined by sending an oriented $n$-dimensional subspace $V$ to its (appropriately oriented) orthogonal complement $V^{\perp}$.

\medskip
Let $\Z/2$ acts on the product $\SO(n)\times\SO(n)$ by interchanging the factors, and let
\begin{equation}
	\label{eq : def of W}
W:= (\SO(n)\times\SO(n))\rtimes\Z/2
\end{equation}
be the semi-direct product induced by this action.
In other words $W$ is the wreath product $\SO(n)\wr\Z/2$.
Thus we have an exact sequence of groups
\begin{equation}
	\label{eq : exact seq of groups - 01}
	\xymatrix{
	1\ar[r] &  \SO(n)\times\SO(n)\ar[r] & W\ar[r]^-{p}  & \Z/2\ar[r] & 1.
	}
\end{equation}
The exact sequence of groups \eqref{eq : exact seq of groups - 01} induces a Lyndon--Hochschild--Serre spectral sequence \cite[Sec.\,IV.1]{Adem2004} whose $E_2$ term is given by
\[
E^{i,j}_2(W)=H^i(\B(\Z/2);\mathcal{H}^j(\B(\SO(n)\times\SO(n));\F_2))\cong H^i(\Z/2;H^j(\SO(n)\times\SO(n);\F_2)),
\]
and that converges to the cohomology $H^*(W;\F/2)$.
From a classical result of Nakaoka \cite{Nakaoka1961}, or from  \cite[Thm.\,1.7]{Adem2004}, we have that this spectral sequence collapses at $E_2$-term.
Consequently, the induced map in cohomology
\[
\B(p)^*\colon H^*(\B(\Z/2);\F_2)\longrightarrow H^*(W;\F_2)
\]
is injection.

\medskip
The group $W$ can be seen as a subgroup of $\SO(2n)$ via the following embedding
\[
 (A,B)  \longmapsto \left(
\begin{array}{cc}
	A & 0\\
	0 & B
\end{array}
\right)
,
\qquad
\omega\longmapsto \left(
\begin{array}{cc}
	0 & I\\
	I & 0
\end{array}
\right)
,
\]
where $(A,B)\in \SO(n)\times\SO(n)$ and $\omega\in\Z/2$.
A direct computation verifies that
\[
\left(
\begin{array}{cc}
	A & 0\\
	0 & B
\end{array}
\right)
\left(
\begin{array}{cc}
	0 & I\\
	I & 0
\end{array}
\right)
=
\left(
\begin{array}{cc}
	0 & I\\
	I & 0
\end{array}
\right)
\left(
\begin{array}{cc}
	B & 0\\
	0 & A
\end{array}
\right) .
\]
Thus, there is a homeomorphism between the quotient spaces
\[
\widetilde{G}_{n}(\R^{2n})/(\Z/2)\cong \SO(2n)/W.
\]
Since the action of any subgroup on the ambient group by multiplication is free we have homotopy equivalences between quotient spaces and associated Borel constructions
\begin{equation}
\label{eq : homotopy of quotient and Borel construction}
\E(\Z/2)\times_{\Z/2}\widetilde{G}_{n}(\R^{2n})  \simeq\widetilde{G}_{n}(\R^{2n})/(\Z/2)\cong \SO(2n)/W \simeq \E\SO(2n)\times_W\SO(2n).
\end{equation}

\medskip
The space $\E\SO(2n)$, as a free contractible  $\SO(2n)$-space, can be seen also as a model for $\E W$ via the inclusion map $i\colon W\longrightarrow \SO(2n)$.
Thus we have the morphism of Borel construction fibrations presented by the right commutative square of the diagram
\begin{equation}
	\label{diagram - 01}
	\xymatrix{
	\E(\Z/2)\times_{\Z/2}\widetilde{G}_{n}(\R^{2n})\ar[d]_{\pi}&\E\SO(2n)\times_W\SO(2n)\ar[rr]\ar[d]_{\pi_1}\ar[l] &  & \E\SO(2n)\times_{\SO(2n)}\SO(2n)\ar[d]_{\pi_2} \\
	\B(\Z/2)&\B W\ar[rr]^-{\B(i)}\ar[l]_-{\B(p)} &  & \B\SO(2n).
	}
\end{equation}
The left square of the diagram is induced by the homotopy \eqref{eq : homotopy of quotient and Borel construction} and commutes up to a homotopy.
Notice that
\[
\E\SO(2n)\times_{\SO(2n)}\SO(2n)\simeq \SO(2n)/\SO(2n) \cong \{ \text{pt}\}.
\]

From the homotopy equivalence \eqref{eq : homotopy of quotient and Borel construction} and the fact that $\B(p)^*$ is an injection we get the following alternative description of the index of the oriented Grassmann manifold
\begin{equation}
	\label{eq : alternative description}
	\ind_{\Z/2}(\widetilde{G}_{n}(\R^{2n});\F_2)=\ker(\pi^*)\cong\ker(\pi_1^*)\cap \im \B(p)^*.
\end{equation}

%--------------------------------------------------
\subsection{Description of the kernel of $\pi_1^*$}
%--------------------------------------------------

In order to describe the kernel we study the morphism of Serre spectral sequences \eqref{diagram - 01}.
First we recall that
\begin{equation}
\label{eq : cohomology of SO(2n)}
H^*(\SO(2n);\F_2)\cong \F_2[e_1,e_3,\ldots,e_{2n-1}]/\langle e_1^{\alpha_1},e_3^{\alpha_3},\ldots,e_{2n-1}^{\alpha_{2n-1}}\rangle	,
\end{equation}
where $\deg(e_i)=i$, and $\alpha_i$ is the smallest power of two with the property that $i\alpha_i\geq 2n$, consult for example \cite[Thm.\,3D.2]{Hatcher2002}.
Furthermore, the set $\{e_1,e_2,e_3,\ldots,e_{2n-2},e_{2n-1}\}$, where $e_{2i}:=e_i^2$ for $i<n$, forms a simple system of  generators.
Then the cohomology of the associated classifying space can be evaluated to be
\begin{equation}
\label{eq : cohomology of BSO(2n)}
H^*(\B\SO(2n);\F_2)\cong\F_2[w_2,w_3,\ldots,w_{2n}],	
\end{equation}
where $w_j=w_j(c^*\gamma^{2n}(\R^{\infty}))$ for all $2\leq j\leq 2n$.
For more details see for example \cite[p.\,216]{McCleary2001} or \cite[Thm.\,III.3.19]{Mimura1991}.

\medskip
The Serre spectral sequence of the fibration
\begin{equation}
\label{eq : fibrtion - 001}
\xymatrix{
\SO(2n)\ar[r] & \E\SO(2n)\times_{\SO(2n)}\SO(2n)\ar[r]  &  \B\SO(2n)
}	
\end{equation}
has the $E_2$-term given by
\[
E^{i,j}_2(\E\SO(2n)\times_{\SO(2n)}\SO(2n)) = H^i(\B\SO(2n);\mathcal{H}^j(\SO(2n);\F_2)),
\]
where the local coefficient system is determined by the action of $\pi_1(\B\SO(2n))$.
Since $\pi_1(\B\SO(2n))=0$ the cohomology with local coefficients we consider becomes ordinary cohomology, and then the Universal Coefficients theorem gives the isomorphism
\[
E^{i,j}_2(\E\SO(2n)\times_{\SO(2n)}\SO(2n)) \cong H^i(\B\SO(2n);\F_2)\otimes H^j(\SO(2n);\F_2).
\]
The spectral sequence converges to $H^*(\E\SO(2n)\times_{\SO(2n)}\SO(2n);\F_2)\cong H^*(\pt;\F_2)$, and therefore
\[
E^{i,j}_{\infty}=\begin{cases}
	\F_2, & \text{for }i=0\text{ and }j=0,\\
	0, & \text{otherwise}.
\end{cases}
\]
Now applying the classical work of Borel \cite{Borel1953Annals} and Zeeman \cite[Thm.\,2]{Zeeaman1958} we have that all generators $e_1,e_2,e_3,\ldots,e_{2n-2},e_{2n-1}$  are transgressive with $\partial_k(e_{k-1})=w_k$ for all $2\leq k\leq 2n$.

\medskip
The Serre spectral sequence of the fibration
\begin{equation}
\label{eq : fibrtion - 002}
\xymatrix{
\SO(2n)\ar[r] & \E\SO(2n)\times_{W}\SO(2n)\ar[r]  &  \B W
}	
\end{equation}
has the $E_2$-term given by
\[
E^{i,j}_2(\E\SO(2n)\times_{W}\SO(2n)) = H^i(\B W;\mathcal{H}^j(\SO(2n);\F_2)),
\]
where the local coefficient system is determined by the action of $\pi_1(\B W)$.
Since the action of $\pi_1(\B W)$ is determined via the inclusion $\pi_1(\B W)\longrightarrow \pi_1(\B\SO(2n))$ and $\pi_1(\B\SO(2n))=0$ we have that the cohomology with local coefficients we consider is just the ordinary cohomology due to the Universal Coefficients theorem.
Thus
\[
E^{i,j}_2(\E\SO(2n)\times_{W}\SO(2n)) = H^i(\B W;\mathcal{H}^j(\SO(2n);\F_2))\cong H^i(\B W;\F_2)\otimes H^j(\SO(2n);\F_2).
\]

\medskip
The morphism \eqref{diagram - 01} between fibrations \eqref{eq : fibrtion - 002} and \eqref{eq : fibrtion - 001}, that have identical fibers, induces a morphism between the corresponding Serre spectral sequences
\[
E^{*,*}_*(\E\SO(2n)\times_{W}\SO(2n)) \longleftarrow E^{*,*}_*(\E\SO(2n)\times_{\SO(2n)}\SO(2n))
\]
that is the identity on the  zero column of the $E_2$-terms.
Since differentials of spectral sequences commute with morphisms of spectral sequences we have that in the spectral sequence $E^{*,*}_*(\E\SO(2n)\times_{W}\SO(2n))$ differential have the following values on the system of simple generators $\partial_k(e_{k-1})=\B(i)^*(w_k)$ for all $2\leq k\leq 2n$.
In particular, we have described $\ker(\pi_1^*)$ as the following ideal
\begin{equation}
	\label{eq : kernel}
	\ker(\pi_1^*)=\langle \B(i)^*(w_2),\B(i)^*(w_3)\ldots,\B(i)^*(w_{2n})\rangle\subseteq H^*(\B W;\F_2) .
\end{equation}

\medskip
In order to further describe $\ker(\pi_1^*)$ as an ideal we identify its generators $\B(i)^*(w_k)$ as Stiefel--Whitney classes of an appropriate pull-back vector bundle.
Since $w_k=w_k(c^*\gamma^{2n}(\R^{\infty}))$ we have that
\begin{equation}
	\label{eq : generatros as SW classes}
\B(i)^*(w_k)=\B(i)^*(w_k(c^*\gamma^{2n}(\R^{\infty})))=w_k ((c\circ\B(i))^*(\gamma^{2n}(\R^{\infty}))
\end{equation}
The vector bundle $(c\circ\B(i))^*(\gamma^{2n}(\R^{\infty}))$ can be described as follows.
The  canonical vector bundle $\gamma^{2n}(\R^{\infty})$ can be presented as the following Borel construction
\[
\xymatrix{
\R^{2n}\ar[r]   &  \E\OO(2n)\times_{\OO(2n)}\R^{2n}\ar[r]   &   \B\OO(2n).
}
\]
Consequently we have pull-back bundles $c^*\gamma^{2n}(\R^{\infty})$ and $(c\circ\B(i))^*(\gamma^{2n}(\R^{\infty}))$ also presented as Borel constructions
\[
\xymatrix{
\E\OO(2n)\times_{W}\R^{2n}\ar[r]\ar[d]   & \E\OO(2n)\times_{\SO(2n)}\R^{2n}\ar[r]\ar[d]   & \E\OO(2n)\times_{\OO(2n)}\R^{2n}\ar[d]\\
\B W\ar[r]  &   \B\SO(2n)\ar[r]    &   \B\OO(2n).
}
\]
Thus in order to completely describe $\ker(\pi_1^*)$ we need to compute Stiefel--Whitney classes of the vector bundle $(c\circ\B(i))^*(\gamma^{2n}(\R^{\infty}))$:
\begin{equation}
	\label{vector bundle 01}
	\xymatrix{
\R^{2n}\ar[r]   &  \E\OO(2n)\times_{W}\R^{2n}\ar[r]   &   \B W.
}
\end{equation}

%--------------------------------------------------
\subsection{Cohomology of the group $W$}
\label{sec : Cohomology of the group $W$}
%--------------------------------------------------
The group $W$ was defined in \eqref{eq : def of W} to be the wreath product $\SO(n)\wr\Z/2=(\SO(n)\times\SO(n))\rtimes\Z/2$.
As we have seen the exact sequence of the groups \eqref{eq : exact seq of groups - 01},
\[
	\xymatrix{
	1\ar[r] &  \SO(n)\times\SO(n)\ar[r] & W\ar[r]^-{p}  & \Z/2\ar[r] & 1,
	}
\]
induced a Lyndon--Hochschild--Serre spectral sequence \cite[Sec.\,IV.1]{Adem2004} whose $E_2$-term is given by
\begin{equation}
	\label{eq : spectral sequence - 002}
	E^{i,j}_2(W)=H^i(\B(\Z/2);\mathcal{H}^j(\B(\SO(n)\times\SO(n));\F_2))\cong H^i(\Z/2;H^j(\SO(n)\times\SO(n);\F_2)).
\end{equation}
This spectral sequence converges to $H^*(\B W;\Z/2)$.
A classical work of Nakaoka \cite{Nakaoka1961}, or for example \cite[Thm.\,IV.1.7]{Adem2004}, implies that this spectral sequence collapses at the $E_2$-term, that means $E^{*,*}_2(W)\cong E^{*,*}_{\infty}(W)$.
We now describe the $E_2$-term, and therefore $E_{\infty}$-term, in more details.
For that we use \cite[Lem.\,IV.1.4]{Adem2004} and its consequence \cite[Cor.\,IV.1.6]{Adem2004}, see also discussion in Sections \ref{sub : Wreath squares} and \ref{sub : Cohomology of the wreath square of a space}.
Note that we here use the fact that the classifying space $\B W$ of the group $W$ can be modeled by the wreath square of $\B\SO(n)$.
For an illustration of this spectral sequence see Figure \ref{fig 2: E_2 term}.

\begin{figure}
\centering
\includegraphics[scale=1]{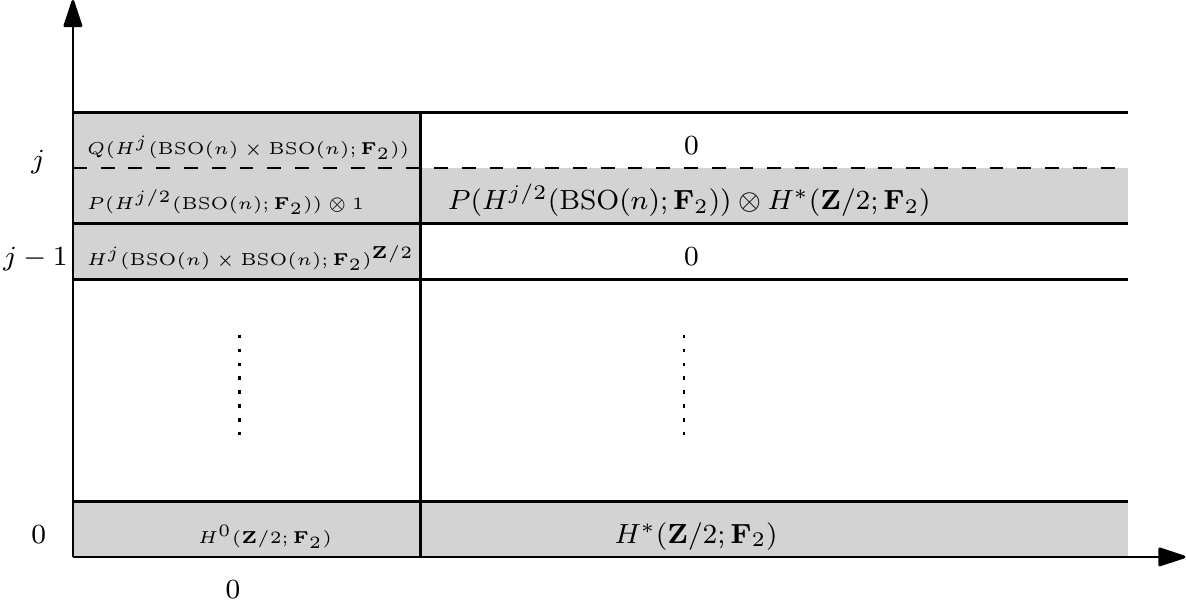}
\caption{\small The $E_2$-term of the spectral sequence \eqref{eq : spectral sequence - 002}. }
\label{fig 2: E_2 term}
\end{figure}

\medskip
Now we apply \cite[Cor.\,IV.1.6]{Adem2004} to our spectral sequence~\eqref{eq : spectral sequence - 002}.
Let us denote
\[
E^{i,0}_2(W)\cong E^{i,0}_{\infty}(W)\cong H^i(\Z/2;\F_2)\cong\F_2[t],
\]
where as before $\deg(t)=1$.
Then we have that
\[
	E^{0,j}_2(W)\cong E^{0,j}_{\infty}(W) \cong  H^{j}(\B\SO(n)\times \B\SO(n);\F_2)^{\Z/2}.
\]
In the case when $j\geq 2$ is even description can be made more precise
\[
E^{0,j}_2(W)\cong E^{0,j}_{\infty}(W) \cong P(H^{j/2}(\B\SO(n);\F_2))\oplus Q(H^{j}(\B\SO(n)\times \B\SO(n);\F_2)).
\]
Furthermore, still for $j\geq 2$ even and $i\geq 1$ we have that
\[
E^{i,j}_2(W)\cong E^{i,j}_{\infty}(W) \cong P(H^{j/2}(\B\SO(n);\F_2))\otimes H^i(\Z/2;\F_2).
\]
From the combination with the previous description of the spectral sequence, multiplicative property of the map $P$ and appropriate formula for $Q$, and the relation
\begin{equation}
	\label{eq : multiplication Q and t}
	Q(H^{j}(\B\SO(n)\times \B\SO(n);\F_2))\cdot  t=0,
\end{equation}
we get a description of the multiplicative structure of the $E_{\infty}$-term as well as $H^*(\B W;\Z/2)$.
For the fact that no multiplicative extension problem arrises see for example \cite[Rem. after Thm.\,2.1]{Leary1997}.

%--------------------------------------------------
\subsection{Stiefel--Whitney classes of \eqref{vector bundle 01} and proof of particular cases of Theorem \ref{th : main - 01}}
%--------------------------------------------------
In the process of describing $\ker(\pi_1^*)$ we obtained in \eqref{eq : kernel} that this kernel is an ideal in $H^*(W;\F_2)$ generated by the elements $\B(i)^*(w_2),\ldots,\B(i)^*(w_{2n})$.
Furthermore, in \eqref{eq : generatros as SW classes} we interpreted the generators of the kernel ideal to be the Stiefel--Whitney classes of the vector bundle $(c\circ\B(i))^*(\gamma^{2n}(\R^{\infty}))$:
\[
\xymatrix{
\R^{2n}\ar[r]   &  \E\OO(2n)\times_{W}\R^{2n}\ar[r]   &   \B W.
}
\]
It is critical to observe that the vector bundle $(c\circ\B(i))^*(\gamma^{2n}(\R^{\infty}))$ is isomorphic to the wreath square vector bundle $c^*\gamma^n(\R^{\infty})\wr\Z/2$.
Consequently we get a formula for the computation of the Stiefel--Whitney classes of the vector bundle \eqref{vector bundle 01} in $H^*(W;\F_2)$ as a direct consequence of the general formula derived in Theorem \ref{thm : SW classes of the wreath product}.

\begin{proposition}
	With the notation and assumptions already made the total Stiefel--Whitney class of the vector bundle $(c\circ\B(i))^*(\gamma^{2n}(\R^{\infty}))$ is
\begin{multline}
\label{XXX}
w((c\circ\B(i))^*(\gamma^{2n}(\R^{\infty})))	=\hfill\\
\sum_{0\leq r< s\leq n}Q(w_r(c^*\gamma^{n}(\R^{\infty}))\otimes w_s(c^*\gamma^{n}(\R^{\infty})))+
\sum_{0\leq r\leq n} P(w_r(c^*\gamma^{n}(\R^{\infty})))\cdot(1+t)^{n-r}.
\end{multline}
\end{proposition}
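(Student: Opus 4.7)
The plan is to deduce the proposition as an immediate application of Theorem~\ref{thm : SW classes of the wreath product} after identifying the universal pull-back bundle $(c\circ\B(i))^*(\gamma^{2n}(\R^{\infty}))$ with the wreath square $c^*\gamma^n(\R^{\infty})\wr\Z/2$. The bulk of the work is in making this identification cleanly; once it is done the formula~\eqref{XXX} is nothing more than applying Theorem~\ref{thm : SW classes of the wreath product} to the $n$-dimensional bundle $\xi:=c^*\gamma^n(\R^{\infty})$.

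First, I would unwind the Borel-construction descriptions. Since $\gamma^{2n}(\R^\infty)$ is modelled by $\E\OO(2n)\times_{\OO(2n)}\R^{2n}\to\B\OO(2n)$, pulling back along $c\circ\B(i)$ yields the bundle $\E\OO(2n)\times_W\R^{2n}\to\B W$. Under the embedding $W\hookrightarrow\OO(2n)$ defined in Section~\ref{sub : A useful description of the index of the oriented Grassmann manifold}, the restriction of the standard $\OO(2n)$-representation on $\R^{2n}=\R^n\oplus\R^n$ is precisely the one where $(A,B)\in\SO(n)\times\SO(n)$ acts block-diagonally and where $\omega\in\Z/2$ swaps the two blocks. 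That is, this representation is the wreath square of the defining representation of $\SO(n)$ on $\R^n$.

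Next, I would match this with the wreath square construction of Section~\ref{sub : Wreath squares}. The bundle $c^*\gamma^n(\R^\infty)$ has base $\B\SO(n)$ and fibre $\R^n$, so by definition $c^*\gamma^n(\R^\infty)\wr\Z/2$ has base $\B\SO(n)\wr\Z/2=(\B\SO(n)\times\B\SO(n))\times_{\Z/2}\E\Z/2$ and fibre $\R^n\oplus\R^n$ carrying exactly the $W$-action described above. Since $\B\SO(n)\wr\Z/2$ is a model for $\B(\SO(n)\wr\Z/2)=\B W$, one obtains the required isomorphism of $2n$-dimensional vector bundles
\[
(c\circ\B(i))^*(\gamma^{2n}(\R^\infty))\;\cong\;c^*\gamma^n(\R^\infty)\wr\Z/2
\]
over $\B W$.

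Finally, plugging $\xi=c^*\gamma^n(\R^\infty)$ into Theorem~\ref{thm : SW classes of the wreath product} directly yields
\[
w(\xi\wr\Z/2)=\sum_{0\leq r<s\leq n}Q(w_r(\xi)\otimes w_s(\xi))+\sum_{0\leq r\leq n}P(w_r(\xi))\cdot(1+t)^{n-r},
\]
and by the bundle isomorphism the left-hand side equals $w((c\circ\B(i))^*(\gamma^{2n}(\R^\infty)))$, which is the content of~\eqref{XXX}. The only mild obstacle is verifying the isomorphism of bundles; this is a routine unpacking of definitions, since it amounts to observing that both bundles are associated to the same $W$-representation on $\R^{2n}$ via the same universal principal bundle, and no computation of Stiefel--Whitney classes is needed beyond the already-established Theorem~\ref{thm : SW classes of the wreath product}.
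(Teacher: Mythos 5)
Your proposal is correct and follows exactly the route the paper takes: the paper asserts the identification $(c\circ\B(i))^*(\gamma^{2n}(\R^{\infty}))\cong c^*\gamma^n(\R^{\infty})\wr\Z/2$ in one sentence (``It is critical to observe\dots'') and then states the formula as a direct consequence of Theorem~\ref{thm : SW classes of the wreath product}. You merely fill in the (straightforward but worth writing out) verification of the bundle isomorphism by unwinding the Borel-construction models and observing that both bundles are associated to the same $W$-representation on $\R^{2n}=\R^n\oplus\R^n$ over the same model of $\B W$.
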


\begin{example}
For $n=2$ using the relation \eqref{XXX} we can give the total Stiefel--Whitney class of the vector bundle $(c\circ\B(i))^*(\gamma^{4}(\R^{\infty}))$ in the following form
\begin{multline*}
w((c\circ\B(i))^*(\gamma^{4}(\R^{\infty})))	=
\sum_{0\leq r< s\leq 2}Q(w_r\otimes w_s)+
\sum_{0\leq r\leq 2} P(w_r)\cdot(1+t)^{2-i}\hfill\\
= Q(w_0\otimes w_1)+Q(w_0\otimes w_2)+Q(w_1\otimes w_2)+P(w_0)(1+t)^2+P(w_1)(1+t)+P(w_2).
\end{multline*}
Here we use simplified notation $w_i:=w_i(c^*\gamma^{2}(\R^{\infty}))$.
Since $c^*$ pulls-back the tautological bundle $\gamma^{2}(\R^{\infty})$ to the oriented Grassmann manifold we have that $w_1=0$.
Therefore, we have
\begin{multline*}
w((c\circ\B(i))^*(\gamma^{4}(\R^{\infty})))	= Q(w_0\otimes w_2) + P(w_0)(1+t)^2 +P(w_2)\hfill \\
= Q(w_0\otimes w_2) + P(w_0)+P(w_0)t^2 +P(w_2)
=1+ (Q(w_0\otimes w_2)+t^2)+P(w_2).
\end{multline*}
Consequently, from \eqref{eq : kernel} we get that
\[
\ker(\pi_1^*)=\langle Q(w_0\otimes w_2)+t^2, P(w_2)\rangle.
\]
In particular, the multiplication property \eqref{eq : multiplication Q and t} implies that $t\cdot (Q(w_0\otimes w_2)+t^2)=t^3\in \ker(\pi_1^*)$.
Now, alternative description of the index \eqref{eq : alternative description} implies that
\[
\ind_{\Z/2}(\widetilde{G}_{2}(\R^{4});\F_2)=\ker(\pi_1^*)\cap \im \B(p)^*=\langle t^3\rangle.
\]
Thus Theorem \ref{th : main - 01} holds for $n=2$.
\end{example}

\begin{example}
	For $n=4$ let $w_i:=w_i(c^*\gamma^{4}(\R^{\infty}))$.
	As in the previous example $w_1=0$ because $c^*$ pulls-back the tautological bundle $\gamma^{4}(\R^{\infty})$ to the oriented Grassmann manifold.
	The relation \eqref{XXX} in this situation reads as follows
\begin{multline*}
w((c\circ\B(i))^*(\gamma^{8}(\R^{\infty})))	=
\sum_{0\leq r< s\leq 4}Q(w_r\otimes w_s)+
\sum_{0\leq r\leq 4} P(w_r)\cdot(1+t)^{4-r}=\hfill\\
\sum_{2\leq s\leq 4}Q(w_0\otimes w_s)+\sum_{2\leq r< s\leq 4}Q(w_r\otimes w_s)+
(1+t)^4+P(w_2)(1+t)^2+P(w_3)(1+t)+P(w_4)=\\
1+ Q(w_0\otimes w_2) + Q(w_0\otimes w_3) + (t^4+P(w_2)+ Q(w_0\otimes w_4))+ Q(w_2\otimes w_3) + \\(P(w_2)t^2+P(w_3)+Q(w_2\otimes w_4))+(P(w_3)t+Q(w_3\otimes w_4))+P(w_4).
\end{multline*}
Thus,
\begin{multline*}
\ker(\pi_1^*)=
\langle
Q(w_0\otimes w_2), \, Q(w_0\otimes w_3),\, t^4+P(w_2)+ Q(w_0\otimes w_4),\, Q(w_2\otimes w_3), \\ P(w_2)t^2+P(w_3)+Q(w_2\otimes w_4),\, P(w_3)t+Q(w_3\otimes w_4),\, P(w_4)
\rangle.
\end{multline*}
We do the following computation in the ideal $J:=\ker(\pi_1^*)$ with the multiplication property \eqref{eq : multiplication Q and t} in mind:
\begin{eqnarray*}
	 t^4+P(w_2)+ Q(w_0\otimes w_4)\in J &\Longrightarrow &  t^6+P(w_2)t^2 \in J,\\
	 t^6+P(w_2)t^2 \in J\quad\text{and}\quad P(w_2)t^2+P(w_3)+Q(w_2\otimes w_4)\in J &\Longrightarrow & t^6+P(w_3)+Q(w_2\otimes w_4)\in J,\\
	 t^6+P(w_3)+Q(w_2\otimes w_4)\in J &\Longrightarrow & t^7+P(w_3)t\in J,\\
	 t^7+P(w_3)t\in J\quad\text{and}\quad P(w_3)t+Q(w_3\otimes w_4)\in J &\Longrightarrow & t^7+Q(w_3\otimes w_4)\in J,\\
	 t^7+Q(w_3\otimes w_4)\in J  &\Longrightarrow & t^8\in J.
\end{eqnarray*}
Hence $t^7\notin J$ while $t^8\in J$.
Therefore
\[
\ind_{\Z/2}(\widetilde{G}_{4}(\R^{8});\F_2)=\ker(\pi_1^*)\cap \im \B(p)^*=\langle t^8\rangle.
\]
This concludes a proof of Theorem \ref{th : main - 01} for $n=4$.
\end{example}

\begin{example}
Let $n=6$, and let $w_i:=w_i(c^*\gamma^{6}(\R^{\infty}))$.
Again, since $c^*$ pulls-back the tautological bundle $\gamma^{6}(\R^{\infty})$ to the oriented Grassmann manifold we have that $w_1=0$.
Now from the relation \eqref{XXX} we get that
\begin{multline*}
w((c\circ\B(i))^*(\gamma^{12}(\R^{\infty})))	=
\sum_{0\leq r< s\leq 6}Q(w_r\otimes w_s)+
\sum_{0\leq r\leq 6} P(w_r)\cdot(1+t)^{6-r}=\hfill\\
\sum_{2\leq s\leq 6}Q(w_0\otimes w_s)+\sum_{2\leq r< s\leq 6}Q(w_r\otimes w_s)+\hfill \\
(1+t)^6+P(w_2)(1+t)^4+P(w_3)(1+t)^3+P(w_4)(1+t)^2+P(w_5)(1+t)+P(w_6)=\\
\sum_{2\leq s\leq 6}Q(w_0\otimes w_s)+\sum_{2\leq r< s\leq 6}Q(w_r\otimes w_s)+1+t^2+t^4+t^6\hfill\\
P(w_2)+P(w_2)t^4+P(w_3)+P(w_3)t+P(w_3)t^2+P(w_3)t^3+P(w_4)+P(w_4)t^2+P(w_5)+P(w_5)t+P(w_6).
\end{multline*}

In particular, we get that
\[
	w_2((c\circ\B(i))^*(\gamma^{12}(\R^{\infty}))) =
	Q(w_0\otimes w_2)+ t^2.
\]
The multiplication property \eqref{eq : multiplication Q and t} and 	the alternative description of the index \eqref{eq : alternative description} imply $t^2\notin \ker(\pi_1^*)$,
and furthermore that
\[
t^3=t(Q(w_0\otimes w_2)+ t^2)=t w_2((c\circ\B(i))^*(\gamma^{12}(\R^{\infty})))\in \ker(\pi_1^*)\cap \im \B(p)^*.
\]
Consequently,
\[
\ind_{\Z/2}(\widetilde{G}_{6}(\R^{12});\F_2)=\ker(\pi_1^*)\cap \im \B(p)^*=\langle t^3\rangle,
\]
and Theorem \ref{th : main - 01} holds when $n=6$.
\end{example}

%%%%%%%%%%%%%%%%%%%%%%%%%%%%%%%%%%%%%%%%%%%%%%%%%%%%%%%%%%%%%%%%%%%%%%%%%%%%%%%%%%%%%
\section{The index of the Grassmann manifold }
\label{sec : proof of the main theorem - 02}
%%%%%%%%%%%%%%%%%%%%%%%%%%%%%%%%%%%%%%%%%%%%%%%%%%%%%%%%%%%%%%%%%%%%%%%%%%%%%%%%%%%%%

Similarly as in Section \ref{sec : proof of the main theorem - 01} for the computation of the index we utilize the definition of the Grassmann manifold $G_{n}(\R^{2n})$ as a homogeneous space.
This new description of the index will be used in Section \ref{sec : proofs } for the proof of Theorem \ref{th : main - 02}.

In this section we assume that $n\geq 1$ is an arbitrary positive integer, and $a\geq 0$ and $b\geq 0$ are unique integers such that $n=2^a(2b+1)$.

%--------------------------------------------------
\subsection{A useful description of the index of the Grassmann manifold}
%--------------------------------------------------
The real Grassmann manifold  $G_{n}(\R^{2n})$ can be defined by
\[
G_{n}(\R^{2n}):=\OO(2n)/(\OO(n)\times\OO(n)).
\]
Recall, the $\Z/2=\langle\omega\rangle$ action on $G_{n}(\R^{2n})$ we study is given by sending an $n$-dimensional subspace $V$ to its orthogonal complement $V^{\perp}$.

\medskip
Let $\Z/2$ acts on the product $\OO(n)\times\OO(n)$ by permuting the factors, and let
\begin{equation}
	\label{eq : def of W1}
U:= (\OO(n)\times\OO(n))\rtimes\Z/2
\end{equation}
be the semi-direct product induced by this action.
Thus, $U$ is the wreath product $\OO(n)\wr\Z/2$ and there is an exact sequence of groups
\begin{equation}
	\label{eq : exact seq of groups - 011}
	\xymatrix{
	1\ar[r] &  \OO(n)\times\OO(n)\ar[r] & U\ar[r]^-{p}  & \Z/2\ar[r] & 1.
	}
\end{equation}
This exact sequence induces a Lyndon--Hochschild--Serre spectral sequence \cite[Sec.\,IV.1]{Adem2004} whose $E_2$-term is given by
\[
E^{i,j}_2(U)=H^i(\B(\Z/2);\mathcal{H}^j(\B(\OO(n)\times\OO(n));\F_2))\cong H^i(\Z/2;H^j(\OO(n)\times\OO(n);\F_2)),
\]
and that converges to the cohomology $H^*(\B U;\F_2)$.
As we have already seen the classical result of Nakaoka \cite{Nakaoka1961} (see also \cite[Thm.\,IV.1.7]{Adem2004}) implies that this spectral sequence collapses at $E_2$-term.
Hence, the induced map in cohomology
\[
\B(p)^*\colon H^*(\B(\Z/2);\F_2)\longrightarrow H^*(\B U;\F_2)
\]
has to be injective.

\medskip
The group $U$ can be seen as a subgroup of $\OO(2n)$ via the following embedding
\[
 (A,B)  \longmapsto \left(
\begin{array}{cc}
	A & 0\\
	0 & B
\end{array}
\right)
,
\qquad
\omega\longmapsto \left(
\begin{array}{cc}
	0 & I\\
	I & 0
\end{array}
\right)
,
\]
where $(A,B)\in \OO(n)\times\OO(n)$ and $\omega\in\Z/2$.
A direct computation verifies that
\[
\left(
\begin{array}{cc}
	A & 0\\
	0 & B
\end{array}
\right)
\left(
\begin{array}{cc}
	0 & I\\
	I & 0
\end{array}
\right)
=
\left(
\begin{array}{cc}
	0 & I\\
	I & 0
\end{array}
\right)
\left(
\begin{array}{cc}
	B & 0\\
	0 & A
\end{array}
\right) .
\]
Consequently, there is a homeomorphism between the quotient spaces
\[
G_{n}(\R^{2n})/(\Z/2)\cong \OO(2n)/U.
\]
Since the action of any subgroup on the ambient group by multiplication is free we have homotopy equivalences between quotient spaces and associated Borel constructions
\begin{equation}
\label{eq : homotopy of quotient and Borel construction1}
\E(\Z/2)\times_{\Z/2}G_{n}(\R^{2n})  \simeq G_{n}(\R^{2n})/(\Z/2)\cong \OO(2n)/U \simeq \E\OO(2n)\times_U\OO(2n).
\end{equation}

\medskip
The space $\E\OO(2n)$ is a free contractible $\OO(2n)$-space.
Thus it can be treated as a model for $\E U$ via the inclusion map $i\colon U\longrightarrow \OO(2n)$.
We have the morphism of Borel construction fibrations presented in the right commutative square of the diagram
\begin{equation}
	\label{diagram - 011}
	\xymatrix{
	\E(\Z/2)\times_{\Z/2}G_{n}(\R^{2n}) \ar[d]_{\pi}&\E\OO(2n)\times_U\OO(2n)\ar[rr]\ar[d]_{\pi_1}\ar[l] &  & \E\OO(2n)\times_{\OO(2n)}\OO(2n)\ar[d]_{\pi_2} \\
	\B(\Z/2)&\B U\ar[rr]^-{\B(i)}\ar[l]_-{\B(p)} &  & \B\OO(2n).
	}
\end{equation}
The left square of the diagram is induced by the homotopy \eqref{eq : homotopy of quotient and Borel construction1} and commutes up to a homotopy.
Observe that
\[
\E\OO(2n)\times_{\OO(2n)}\OO(2n)\simeq \OO(2n)/\OO(2n) \cong \{ \text{pt}\}.
\]

The homotopy equivalence \eqref{eq : homotopy of quotient and Borel construction1} and the fact that $\B(p)^*$ is an injection yield the following alternative description of the index of the Grassmann manifold
\begin{equation}
	\label{eq : alternative description1}
	\ind_{\Z/2}(G_{n}(\R^{2n});\F_2)=\ker(\pi^*)\cong\ker(\pi_1^*)\cap \im \B(p)^*.
\end{equation}

%--------------------------------------------------
\subsection{Description of the kernel of $\pi_1^*$}
%--------------------------------------------------

For the description of the kernel we analyze  the morphism of Serre spectral sequences \eqref{diagram - 011}.
We first recall that
\begin{equation}
\label{eq : cohomology of SO(2n)1}
H^*(\OO(2n);\F_2)\cong H^*(\SO(2n);\F_2)\oplus H^*(\SO^{-}(2n);\F_2),
\end{equation}
where $\SO^{-}(2n)=\{A\in\OO(2n) : \det(A)=-1\}\cong \SO(2n)$, see \cite[Cor.\,III.3.15]{Mimura1991}.
Then the cohomology of the associated classifying space can be evaluated to be
\begin{equation}
\label{eq : cohomology of BSO(2n)}
H^*(\B\OO(2n);\F_2)\cong\F_2[w_1,w_2,w_3,\ldots,w_{2n}],	
\end{equation}
where $w_j=w_j(\gamma^{2n}(\R^{\infty}))$ for all $1\leq j\leq 2n$.
For more details see for example \cite[p.\,216]{McCleary2001} or \cite[Thm.\,III.3.19]{Mimura1991}.

\medskip
The Serre spectral sequence of the fibration
\begin{equation}
\label{eq : fibrtion - 0011}
\xymatrix{
\OO(2n)\ar[r] & \E\OO(2n)\times_{\OO(2n)}\OO(2n)\ar[r]  &  \B\OO(2n),
}	
\end{equation}
whose fiber has two path-connected components, has the $E_2$-term given by
\[
E^{i,j}_2(\E\OO(2n)\times_{\OO(2n)}\OO(2n)) = H^i(\B\OO(2n);\mathcal{H}^j(\OO(2n);\F_2)),
\]
where the local coefficient system is determined by the action of $\pi_1(\B\OO(2n))\cong \pi_0(\OO(2n))\cong\Z/2$.
The first isomorphism follows from the long exact sequence in homotopy associated to the fibration $\OO(2n)\longrightarrow \E\OO(2n)\longrightarrow  \B\OO(2n)$. 
Since $H^*(\OO(2n);\F_2)\cong H^*(\SO(2n);\F_2)\oplus H^*(\SO^-(2n);\F_2)$ and $\pi_1(\B\OO(2n))$ acts by interchanging summands we have that as a $\pi_1(\B\OO(2n))$-module
\[
H^*(\OO(2n);\F_2)\cong \mathrm{Coind}^{\OO(2n)}_{\SO(2n)}\big(H^*(\SO(2n);\F_2)\big).
\]
Thus according to the Shapiro lemma \cite[Prop.\,6.2]{Brown1994} the $E_2$-term simplifies to
\begin{multline*}
E^{i,j}_2(\E\OO(2n)\times_{\OO(2n)}\OO(2n)) = H^i(\B\OO(2n);\mathcal{H}^j(\OO(2n);\F_2))\cong\\
H^i\big(\B\OO(2n);\mathrm{Coind}^{\OO(2n)}_{\SO(2n)}\big(H^j(\SO(2n);\F_2)\big)\big)\cong 	
H^i(\B\SO(2n);H^j(\SO(2n);\F_2))\cong \\
H^i(\B\SO(2n);\F_2)\otimes H^j(\SO(2n);\F_2).
\end{multline*}
Since the spectral sequence converges to $H^*(\E\OO(2n)\times_{\OO(2n)}\OO(2n);\F_2)\cong H^*(\pt;\F_2)$ we know that
\[
E^{i,j}_{\infty}=\begin{cases}
	\F_2, & \text{for }i=0\text{ and }j=0,\\
	0, & \text{otherwise}.
\end{cases}
\]
Now applying the classical work of Borel \cite{Borel1953Annals} and Zeeman \cite[Thm.\,2]{Zeeaman1958} we have that all elements of the simple system of generators $e_1,e_2,e_3,\ldots,e_{2n-2},e_{2n-1}$ of the cohomology of $\SO(2n)$ are transgressive with $\partial_k(e_{k-1})=w_k$ for all $2\leq k\leq 2n$.

\medskip
The Serre spectral sequence of the fibration
\begin{equation}
\label{eq : fibrtion - 0021}
\xymatrix{
\OO(2n)\ar[r] & \E\OO(2n)\times_{U}\OO(2n)\ar[r]  &  \B U,
}	
\end{equation}
with disconnected fiber, has the $E_2$-term given by
\[
E^{i,j}_2(\E\OO(2n)\times_{U}\OO(2n)) = H^i(\B U;\mathcal{H}^j(\OO(2n);\F_2)),
\]
where the local coefficient system is determined by the action of $\pi_1(\B U)\cong \pi_0(U)\cong (\Z/2\times\Z/2)\rtimes\Z/2\cong D_8$.
The first isomorphism again is obtained from the long exact sequence in homotopy associated to the fibration $U\longrightarrow \E U\longrightarrow \B U$. 
Here $D_8$ denotes the Dihedral group of isometries of a square.
Let $U^+:=U\cap\SO(2n)$ be the index $2$ subgroup of orientation preserving elements of $U$.
Then as a  $\pi_1(\B U)$-module
\[
H^*(\OO(2n);\F_2)\cong \mathrm{Coind}^{U}_{U^+}\big(H^*(\SO(2n);\F_2)\big).
\]
Therefore the Shapiro lemma \cite[Prop.\,6.2]{Brown1994} allows us to simplify the $E_2$-term again
\begin{multline*}
	E^{i,j}_2(\E\OO(2n)\times_{U}\OO(2n)) = H^i(\B U;\mathcal{H}^j(\OO(2n);\F_2))\cong
	H^i\big(\B U; \mathrm{Coind}^{U}_{U^+}\big(H^j(\SO(2n);\F_2)\big)\big)\cong \\
	H^i(\B U^+;H^j(\SO(2n);\F_2))\cong H^i(\B U^+;\F_2)\otimes H^j(\SO(2n);\F_2).
\end{multline*}

\medskip
The morphism \eqref{diagram - 011} between fibrations \eqref{eq : fibrtion - 0021} and \eqref{eq : fibrtion - 0011}, that have identical fibers, induces a morphism between the corresponding Serre spectral sequences
\[
E^{*,*}_*(\E\OO(2n)\times_{U}\OO(2n)) \longleftarrow E^{*,*}_*(\E\OO(2n)\times_{\OO(2n)}\OO(2n))
\]
which is the identity on the zero column of the $E_2$-terms, but now is {\bf not} the identity on the zero row of the $E_2$-terms.
In fact, the morphism of the spectral sequence on the zero row of the $E_2$-terms is the restriction homomorphism $\res^{SO(2n)}_{U^+}\colon H^*(\B\SO(2n);\F_2)\longrightarrow H^*(\B U^+;\F_2)$ induced by the inclusion $i|_{U^+}\colon U^+\longrightarrow SO(2n)$.
Differentials of spectral sequences commute with morphisms of spectral sequences.
Hence in the spectral sequence $E^{*,*}_*(\E\OO(2n)\times_{U}\OO(2n))$ the differential have the following values on the system of simple generators $\partial_k(e_{k-1})=\B(i|_{U^+})^*(w_k)$ for all $2\leq k\leq 2n$.

\medskip
Unlike in Section \ref{sub : A useful description of the index of the oriented Grassmann manifold} we did not yet reached a description of $\ker(\pi_1^*)$.
For that we expand the diagram \eqref{diagram - 011} as follows
\begin{equation}
	\label{diagram - new}
	\xymatrix{
	\E\OO(2n)\times_U\OO(2n)\ar[rr]\ar[dd]_{\pi_1}\ar[dr] &  & \E\OO(2n)\times_{\OO(2n)}\OO(2n)\ar[dd]|!{[d]}\hole\ar[dr] &\\
	 & \E\OO(2n)\times_U\pt\ar[rr]\ar[dd]      & & \E\OO(2n)\times_{\OO(2n)}\pt\ar[dd]  \\
	\B U\ar[rr]|!{[r]}\hole\ar[dr]_-{\id} &  & \B\OO(2n)\ar[dr]_-{\id} & \\
	 &\B U\ar[rr] &  & \B\OO(2n)
	}
\end{equation}
with diagonal maps induced by the $\OO(2n)$, and also $U$, equivariant projection $\tau\colon\OO(2n)\longrightarrow\pt$.
The cube diagram of bundle morphisms \eqref{diagram - new} induces the following diagram of the corresponding spectral sequence morphisms:
\begin{equation}
\label{digram morphism of sss}
	\xymatrix{
	E^{*,*}_*(\E\OO(2n)\times_{U}\OO(2n)) & E^{*,*}_*(\E\OO(2n)\times_{\OO(2n)}\OO(2n))\ar[l] \\
	E^{*,*}_*(\E\OO(2n)\times_{U}\pt)\ar[u]^{\eta_1} & E^{*,*}_*(\E\OO(2n)\times_{\OO(2n)}\pt),\ar[l]\ar[u]^{\eta_2}
	}
\end{equation}
where the vertical maps $\eta_1$ and $\eta_2$ are induced by the $\OO(2n)$, and also $U$, equivariant projection $\tau$.
The spectral sequences in the bottom row of the diagram collapse at the $E_2$-term.
Thus, from the infinity term of the diagram \eqref{digram morphism of sss} we get the equality
\begin{equation}
	\label{eq kernel -001}
	\ker (\pi_1) = \ker (\eta_1).
\end{equation}
Now in order to understand $\ker (\eta_1)$ we specialize diagram \eqref{digram morphism of sss} to the $E_2$-term and get
\begin{equation*}
	\xymatrix{
	H^i(\B U;\mathcal{H}^j(\OO(2n);\F_2)) & H^i(\B\OO(2n);\mathcal{H}^j(\OO(2n);\F_2))\ar[l] \\
	H^i(\B U;\mathcal{H}^j(\pt;\F_2))\ar[u]^{\eta_1} & H^i(\B\OO(2n);\mathcal{H}^j(\pt;\F_2)).\ar[l]\ar[u]^{\eta_2}
	}
\end{equation*}
Furthermore, on the zero row of the $E_2$-term we have
\begin{equation}
\label{diagram ss morphism 0 row}
	\xymatrix{
	H^i(\B U;\mathrm{Coind}^{U}_{U^+}\F_2) & H^i(\B\OO(2n);\mathrm{Coind}^{\OO(2n)}_{\SO(2n)}\F_2)\ar[l] \\
	H^i(\B U;\F_2)\ar[u]^{\eta_1} & H^i(\B\OO(2n);\F_2).\ar[l]\ar[u]^{\eta_2}
	}
\end{equation}
The vertical maps in \eqref{diagram ss morphism 0 row} are determined by the coefficient morphism $\tau^*\colon H^*(\pt;\F_2)\longrightarrow H^*(\OO(2n);\F_2)$.
Since from Shapiro lemma
\[
H^i(\B U;\mathrm{Coind}^{U}_{U^+}\F_2)\cong H^i(\B U^+;\F_2)
\qquad\text{and}\qquad
H^i(\B\OO(2n);\mathrm{Coind}^{\OO(2n)}_{\SO(2n)}\F_2)\cong H^i(\B\SO(2n);\F_2)
\]
we have that $\eta_1=\res^{U}_{U^+}$ and $\eta_2=\res^{\OO(2n)}_{\SO(2n)}$.
From computation of the spectral sequences of the fibrations \eqref{eq : fibrtion - 0011} and \eqref{eq : fibrtion - 0021} we have that the diagram \eqref{diagram ss morphism 0 row} in infinity term becomes
\begin{equation}
\label{diagram ss morphism 0 row infinity term}
	\xymatrix{
	H^i(\B U^+;\F_2)/\langle \B(i|_{U^+})^*(w_2),\ldots, \B(i|_{U^+})^*(w_{2n}) \rangle & H^i(\B\SO(2n);\F_2)/\langle w_2,\ldots,w_{2n}\rangle \ar[l] \\
	H^i(\B U;\F_2)\ar[u]^{\eta_1} & H^i(\B\OO(2n);\F_2).\ar[l]\ar[u]^{\eta_2}
	}
\end{equation}
Since $\eta_1$ is the restriction,  $\B(i|_{U^+})^*(w_2),\ldots, \B(i|_{U^+})^*(w_{2n})$ are Stiefel--Whitney classes fulfilling naturality, using the sequence of group inclusions $U^+\longrightarrow U\overset{i}{\longrightarrow}\OO(2n)$ we have that
\begin{equation}
	\label{eq : kernel 2}
	\ker(\pi_1^*)=\ker(\eta_1^*)=\langle \B(i)^*(w_1), \B(i)^*(w_2),\ldots,\B(i)^*(w_{2n})\rangle\subseteq H^*(\B U;\F_2) .
\end{equation}
Observe that commutativity of the diagram \eqref{diagram ss morphism 0 row infinity term} and the naturally of Stiefel--Whitney classes force the first Stiefel--Whitney class $\B(i)^*(w_1)$ in the kernel.

\medskip
Thus in order to completely describe $\ker(\pi_1^*)$ we need to compute Stiefel--Whitney classes of the vector bundle $ B(i)^*(\gamma^{2n}(\R^{\infty}))$:
\begin{equation}
	\label{vector bundle 011}
	\xymatrix{
\R^{2n}\ar[r]   &  \E\OO(2n)\times_{U}\R^{2n}\ar[r]   &   \B U.
}
\end{equation}

%--------------------------------------------------
\subsection{Cohomology of the groups $U$}
\label{sec : Cohomology of the group U}
%--------------------------------------------------

The group $U$ was defined to be the wreath product $\OO(n)\wr\Z/2=(\OO(n)\times\OO(n))\rtimes\Z/2$.
The exact sequence of the groups \eqref{eq : exact seq of groups - 011},
\[
	\xymatrix{
	1\ar[r] &  \OO(n)\times\OO(n)\ar[r] & U\ar[r]^-{p}  & \Z/2\ar[r] & 1,
	}
\]
induced a Lyndon--Hochschild--Serre spectral sequence whose $E_2$-term is given by
\begin{equation}
	\label{eq : spectral sequence - 02}
	E^{i,j}_2(U)=H^i(\B(\Z/2);\mathcal{H}^j(\B(\OO(n)\times\OO(n));\F_2))\cong H^i(\Z/2;H^j(\OO(n)\times\OO(n);\F_2)).
\end{equation}
This spectral sequence converges to $H^*(\B U;\Z/2)$.
As we have seen this spectral sequence collapses at the $E_2$-term, meaning $E^{*,*}_2(U)\cong E^{*,*}_{\infty}(U)$.
Like in Section \ref{sec : Cohomology of the group $W$} we describe the $E_2$-term, and therefore $E_{\infty}$-term, in more details.
For that we use again \cite[Lem.\,IV.1.4]{Adem2004} and \cite[Cor.\,IV.1.6]{Adem2004}, consult also Sections \ref{sub : Wreath squares} and \ref{sub : Cohomology of the wreath square of a space}.
We utilize the fact that the classifying space $\B U$ of the group $U$ can be obtained to be the wreath square of $\B\OO(n)$.

\medskip
Let us apply \cite[Cor.\,IV.1.6]{Adem2004} to our spectral sequence~\eqref{eq : spectral sequence - 02}.
Denote
\[
E^{i,0}_2(U)\cong E^{i,0}_{\infty}(U)\cong H^i(\Z/2;\F_2)\cong\F_2[t],
\]
where as before $\deg(t)=1$.
Then we have that
\[
	E^{0,j}_2(U)\cong E^{0,j}_{\infty}(U) \cong  H^{j}(\B\OO(n)\times \B\OO(n);\F_2)^{\Z/2}.
\]
In particular when $j\geq 2$ is even description can be made even more precise
\[
E^{0,j}_2(U)\cong E^{0,j}_{\infty}(U) \cong P(H^{j/2}(\B\OO(n);\F_2))\oplus Q(H^{j}(\B\OO(n)\times \B\OO(n);\F_2)).
\]
Furthermore, still for $j\geq 2$ even and $i\geq 1$ we have that
\[
E^{i,j}_2(U)\cong E^{i,j}_{\infty}(U) \cong P(H^{j/2}(\B\OO(n);\F_2))\otimes H^i(\Z/2;\F_2).
\]
From the combination with the previous description of the spectral sequence, multiplicative property of the map $P$ and appropriate formula for $Q$, and the relation
\begin{equation}
	\label{eq : multiplication Q and t 2}
	Q(H^{j}(\B\OO(n)\times \B\OO(n);\F_2))\cdot  t=0,
\end{equation}
we get a description of the multiplicative structure of the $E_{\infty}$-term as well as $H^*(\B U;\Z/2)$.
The multiplicative extension problem does not arise in this situation, for more details consult for example \cite[Rem. after Thm.\,2.1]{Leary1997}.

%--------------------------------------------------
\subsection{Stiefel--Whitney classes of \eqref{vector bundle 011} and proof of particular cases of Theorem \ref{th : main - 02}}
%--------------------------------------------------
In the process of describing $\ker(\pi_1^*)$ we obtained in \eqref{eq : kernel 2} that this kernel is an ideal in $H^*(\B U;\F_2)$ generated by the elements $\B(i)^*(w_1),\ldots,\B(i)^*(w_{2n})$.
Furthermore,  we interpreted the generators of the kernel ideal to be the Stiefel--Whitney classes of the vector bundle $\B(i)^*(\gamma^{2n}(\R^{\infty}))$:
\[
\xymatrix{
\R^{2n}\ar[r]   &  \E\OO(2n)\times_{U}\R^{2n}\ar[r]   &   \B U.
}
\]
As before it can be seen that the vector bundle $\B(i)^*(\gamma^{2n}(\R^{\infty}))$ is isomorphic to the wreath square vector bundle $\gamma^n(\R^{\infty})\wr\Z/2$.
Thus the following formula for the computation of the Stiefel--Whitney classes of the vector bundle \eqref{vector bundle 011} in $H^*(\B U;\F_2)$ is a direct consequence of the general formula derived in Theorem \ref{thm : SW classes of the wreath product}.

\begin{proposition}
	With the notation and assumptions already made the total Stiefel--Whitney class of the vector bundle $\B(i)^*(\gamma^{2n}(\R^{\infty}))$ is
\begin{multline}
\label{XXXY}
w(\B(i)^*(\gamma^{2n}(\R^{\infty})))	=\hfill\\
\sum_{0\leq r< s\leq n}Q(w_r(\gamma^{n}(\R^{\infty}))\otimes w_s(\gamma^{n}(\R^{\infty})))+
\sum_{0\leq r\leq n} P(w_r(\gamma^{n}(\R^{\infty})))\cdot(1+t)^{n-r}.
\end{multline}
\end{proposition}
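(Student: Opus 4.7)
The plan is to reduce this to a direct application of Theorem~\ref{thm : SW classes of the wreath product} with $\xi=\gamma^n(\R^{\infty})$, since the hard work (the wreath-square Stiefel--Whitney formula) has already been done. The only genuine content specific to this proposition is verifying the bundle isomorphism
\[
\B(i)^*(\gamma^{2n}(\R^{\infty}))\cong \gamma^n(\R^{\infty})\wr\Z/2,
\]
which is already flagged in the paragraph preceding the statement. Once this is in hand, naturality of the total Stiefel--Whitney class, combined with the wreath-square formula, gives \eqref{XXXY} directly.

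For the bundle identification, I would unpack both sides as Borel constructions. The universal bundle $\gamma^{2n}(\R^{\infty})$ is modeled by $\E\OO(2n)\times_{\OO(2n)}\R^{2n}\to \B\OO(2n)$, so its pull-back along $\B(i)\colon \B U\to\B\OO(2n)$ is $\E\OO(2n)\times_U\R^{2n}\to \B U$. The inclusion $i\colon U=\OO(n)\wr\Z/2\hookrightarrow \OO(2n)$ sends $(A,B)$ to the block-diagonal matrix and $\omega$ to the swap, so as a $U$-representation $\R^{2n}$ decomposes as $\R^n\oplus\R^n$ with $\OO(n)\times\OO(n)$ acting blockwise and $\Z/2$ interchanging the summands. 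On the base side, $\B U$ admits $(\B\OO(n)\times\B\OO(n))\times_{\Z/2}\E\Z/2$ as a model, which is exactly the wreath square $\B\OO(n)\wr\Z/2$ from Section~\ref{sub : Wreath squares}. Matching these two descriptions with the definition of $\gamma^n(\R^{\infty})\wr\Z/2$ (as the quotient under the free diagonal $\Z/2$-action of the pull-back of $\gamma^n(\R^{\infty})\times\gamma^n(\R^{\infty})$ to $(\B\OO(n)\times\B\OO(n))\times\E\Z/2$) produces the asserted isomorphism.

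With the isomorphism established, Theorem~\ref{thm : SW classes of the wreath product} applied to $\xi=\gamma^n(\R^{\infty})$ directly yields the formula
\[
w(\gamma^n(\R^{\infty})\wr\Z/2)=\sum_{0\leq r< s\leq n}Q(w_r(\gamma^{n}(\R^{\infty}))\otimes w_s(\gamma^{n}(\R^{\infty})))+\sum_{0\leq r\leq n} P(w_r(\gamma^{n}(\R^{\infty})))\cdot(1+t)^{n-r},
\]
and naturality of Stiefel--Whitney classes transports this equality to $w(\B(i)^*(\gamma^{2n}(\R^{\infty})))$, giving \eqref{XXXY}.

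The only obstacle worth worrying about is purely bookkeeping: making sure that the $\Z/2$-action used to define the wreath square bundle in Section~\ref{sub : Wreath squares} matches the one coming from $i$ (the swap of the two $\OO(n)$-factors and the associated swap of $\R^n$-summands), and that the model $\B\OO(n)\wr\Z/2$ really classifies $U$. Both are standard, but it is worth stating them explicitly so that the transfer of the wreath-square formula from the abstract setting of Theorem~\ref{thm : SW classes of the wreath product} to the present homogeneous-space setting is unambiguous. No further computation is needed.
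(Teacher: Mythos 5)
Your proposal is correct and follows the same route the paper takes: observe that $\B(i)^*(\gamma^{2n}(\R^{\infty}))$ is the wreath square bundle $\gamma^n(\R^{\infty})\wr\Z/2$ and then invoke Theorem~\ref{thm : SW classes of the wreath product}. The paper merely asserts the bundle isomorphism (``As before it can be seen that\dots''), whereas you spell it out via Borel-construction models, but this is the same argument with a detail made explicit.
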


\begin{example}
\label{ex : n=2}
For $n=2$ using the relation \eqref{XXXY} we can give the total Stiefel--Whitney class of the vector bundle $\B(i)^*(\gamma^{4}(\R^{\infty}))$ in the following form
\begin{multline*}
w(B(i)^*(\gamma^{4}(\R^{\infty})))	=
\sum_{0\leq r< s\leq 2}Q(w_r\otimes w_s)+
\sum_{0\leq r\leq 2} P(w_r)\cdot(1+t)^{2-i}\hfill\\
= Q(w_0\otimes w_1)+Q(w_0\otimes w_2)+Q(w_1\otimes w_2)+P(w_0)(1+t)^2+P(w_1)(1+t)+P(w_2).
\end{multline*}
Consequently, from \eqref{eq : kernel 2} we get that
\[
\ker(\pi_1^*)=\langle Q(w_0\otimes w_1), Q(w_0\otimes w_2)+t^2+P(w_1),Q(w_1\otimes w_2)+P(w_1)t ,P(w_2)\rangle.
\]
In particular, the multiplication property \eqref{eq : multiplication Q and t 2} implies that
\[
t\cdot (Q(w_0\otimes w_2)+t^2+P(w_1))=t^3+P(w_1)t\in \ker(\pi_1^*).
\]
Since $Q(w_1\otimes w_2)+P(w_1)t\in \ker(\pi_1^*)$ we have $t^3+Q(w_1\otimes w_2)\in \ker(\pi_1^*)$.
Again the multiplication property \eqref{eq : multiplication Q and t 2} yields
\[
t(t^3+Q(w_1\otimes w_2))=t^4\in \ker(\pi_1^*).
\]
Now, from \eqref{eq : alternative description1} follows
\[
\ind_{\Z/2}(G_{2}(\R^{4});\F_2)=\ker(\pi_1^*)\cap \im \B(p)^*=\langle t^4\rangle.
\]
Thus Theorem \ref{th : main - 02} holds for $n=2$.
\end{example}

\begin{example}
	For $n=4$ let $w_i:=w_i(\gamma^{4}(\R^{\infty}))$.
	The relation \eqref{XXXY} in this situation reads as follows
\begin{multline*}
w(\B(i)^*(\gamma^{8}(\R^{\infty})))	=
\sum_{0\leq r< s\leq 4}Q(w_r\otimes w_s)+
\sum_{0\leq r\leq 4} P(w_r)\cdot(1+t)^{4-r}=\hfill\\
\sum_{0\leq r< s\leq 4}Q(w_r\otimes w_s)+
P(w_0)(1+t)^4+P(w_1)(1+t)^3+P(w_2)(1+t)^2+P(w_3)(1+t)+P(w_4)=\hfill\\
1+ Q(w_0\otimes w_1) + \big(Q(w_0\otimes w_2) +P(w_1)\big) + \big(Q(w_0\otimes w_3)+Q(w_1\otimes w_2) +P(w_1)t\big)+   \hfill\\
\big( Q(w_0\otimes w_4)+Q(w_1\otimes w_3)+t^4+P(w_1)t^2+P(w_2) \big) + \big (Q(w_1\otimes w_4)+Q(w_2\otimes w_3) +P(w_1)t^3\big) +\\
\big(Q(w_2\otimes w_4)+P(w_2)t^2+P(w_3)\big)+\big(Q(w_3\otimes w_4)+P(w_3)t)+P(w_4).
\end{multline*}
Thus,
\begin{multline*}
\ker(\pi_1^*)=
\langle
Q(w_0\otimes w_1), \, Q(w_0\otimes w_2) +P(w_1),\, Q(w_0\otimes w_3)+Q(w_1\otimes w_2) +P(w_1)t,\\ Q(w_0\otimes w_4)+Q(w_1\otimes w_3)+t^4+P(w_1)t^2+P(w_2),\,Q(w_1\otimes w_4)+Q(w_2\otimes w_3) +P(w_1)t^3,\\
  Q(w_2\otimes w_4)+P(w_2)t^2+P(w_3),\,Q(w_3\otimes w_4)+P(w_3)t,\, P(w_4)
\rangle.
\end{multline*}
Since $ Q(w_0\otimes w_2) +P(w_1)\in J$ then $P(w_1)t \in J$.
Thus we can modify the presentation of the kernel ideal as follows
\begin{multline*}
\ker(\pi_1^*)=
\langle
Q(w_0\otimes w_1), \, Q(w_0\otimes w_2) +P(w_1),\, P(w_1)t,\, Q(w_0\otimes w_3)+Q(w_1\otimes w_2),\\ Q(w_0\otimes w_4)+Q(w_1\otimes w_3)+t^4+P(w_2),\,Q(w_1\otimes w_4)+Q(w_2\otimes w_3) ,\\
  Q(w_2\otimes w_4)+P(w_2)t^2+P(w_3),\,Q(w_3\otimes w_4)+P(w_3)t,\, P(w_4)
\rangle.
\end{multline*}
Now we compute in the ideal $J:=\ker(\pi_1^*)$ with the multiplication property \eqref{eq : multiplication Q and t 2} in mind.
\begin{eqnarray*}
	  Q(w_0\otimes w_4)+Q(w_1\otimes w_3)+t^4+P(w_2)\in J &\Longrightarrow &   t^5+P(w_2)t \in J,\\
	  Q(w_2\otimes w_4)+P(w_2)t^2+P(w_3)\in J &\Longrightarrow &  Q(w_2\otimes w_4)+t^6+P(w_3)\in J,\\
	   Q(w_2\otimes w_4)+t^6+P(w_3)\in J &\Longrightarrow &   t^7+P(w_3)t\in J,\\
	  Q(w_3\otimes w_4)+P(w_3)t \in J\quad\text{and}\quad t^7+P(w_3)t \in J &\Longrightarrow & t^7+ Q(w_3\otimes w_4)\in J,\\
	 t^7+ Q(w_3\otimes w_4) \in J  &\Longrightarrow &  t^8\in J.
\end{eqnarray*}
Hence $t^7\notin J$ while $t^8\in J$.
Therefore
\[
\ind_{\Z/2}(G_{4}(\R^{8});\F_2)=\ker(\pi_1^*)\cap \im \B(p)^*=\langle t^8\rangle.
\]
This completes a proof of Theorem \ref{th : main - 02} in the case when $n=4$.
\end{example}

%%%%%%%%%%%%%%%%%%%%%%%%%%%%%%%%%%%%%%%%%%%%%%%%%%%%%%%%%%%%%%%%%%%%%%%%%%%%%%%%%%%%%
\section{Proofs of main theorems}
\label{sec : proofs }
%%%%%%%%%%%%%%%%%%%%%%%%%%%%%%%%%%%%%%%%%%%%%%%%%%%%%%%%%%%%%%%%%%%%%%%%%%%%%%%%%%%%%

In this section we prove both Theorem~\ref{th : main - 01} and Theorem~\ref{th : main - 02} along the same line of arguments.
Therefore we introduce a common notation.

Let $n\geq 2$ be an even integer, and let $a\geq 1$ and $b\geq 0$ be the unique integers such that $n=2^a(2b+1)$.
Recall that we saw $W= \SO (n)\wr \Z/2 $  as a subgroup of $\SO (2n)$ so that $ \widetilde{G}_n(\R^{2n})/(\Z/2) \cong \SO(2n)/W$.
Furthermore we denoted by $i\colon W \longrightarrow \SO(2n)$ the inclusion, and by $\B(i)\colon \B W \longrightarrow \B \SO (2n) $ the induced map.

For $n\geq 1$ an arbitrary positive integer, let $a\geq 0$ and $b\geq 0$ be integers such that $n=2^a(2b+1)$.
Recall that we interpreted the group $U= \OO (n)\wr \Z/2 $  as a subgroup of $\OO (2n)$ so that $G_n(\R^{2n})/(\Z/2) \cong \OO(2n)/U$.
As before, by abusing the notation, we denoted by $i\colon U \longrightarrow \OO(2n)$ the inclusion, and with $\B(i)\colon \B U \longrightarrow \B \OO (2n) $ the corresponding induced map.

Notice that we denote by $i$ two different inclusions.
This should not cause any confusion since these two maps play the same role in the corresponding proofs of Theorems \ref{th : main - 01} and \ref{th : main - 02} that proceed along the same lines.

\medskip
In order to ease the notation in the proofs of theorems we introduce a simplification of notation:
\begin{compactitem}
	\item  $\xi:=c^*\gamma^n(\R^{\infty})\text{  or  } \gamma^n(\R^{\infty})$,
	\item  $w_i := w_i(\xi )$,
	\item  $Q_{i,j}:=Q(w_i\otimes w_j):= w_i\otimes w_j + w_j\otimes w_i$,
	\item  $P_i:=P(w_i):= w_i\otimes w_i$
	\item  $X:= W \text {  or  } U \text{  respectively}$.
\end{compactitem}

\medskip
As we have seen in the Section \ref{sec : Cohomology of the group $W$}, for oriented Grassmann manifolds, and in the Section \ref{sec : Cohomology of the group U} for Grassmann manifolds, the $0$-column of the $E_2$-term of the Lyndon--Hochschild--Serre spectral sequence \eqref{eq : spectral sequence - 002} and \eqref{eq : spectral sequence - 02} is additively generated by appropriate
\[
\{ Q_{i,j}\text{ }|\text{ }0\leq i<j\leq n \}
\qquad\text{and}\qquad
\{P_i\text{ } |\text{ } 0\leq i\leq n\},
\]
where $E_2^{0,*} \cong H^*(\SO(n);\F_2)^{\Z/2}$ for the oriented Grassmann manifold, and $E_2^{0,*} \cong H^*(\OO(n);\F_2)^{\Z/2}$ in the case of the Grassmann manifold.
Furthermore, $w_0=1$ and consequently $P_0:=P(w_0)=1\otimes 1 = 1$.

\medskip
Recall that in \eqref{eq : kernel} and \eqref{eq : kernel 2} we have proved the following description of the kernel ideal
\[
	\ker(\pi_1^*)=\langle \B(i)^*(w_1),\ldots,\B(i)^*(w_{2n})\rangle =	\langle w_1(\xi\wr\Z/2),\ldots, w_{2n}(\xi\wr\Z/2)\rangle \subseteq H^*(\B X;\F_2) .	
\]
Notice when we work over the oriented Grassmann manifold $w_1=w_1(\xi)=0$ in $H^*(\SO(n);\F_2)$.
Therefore, $w_1(\xi\wr\Z/2)=\B(i)^*(w_1)=0$ and so $Q_{1,j}=Q_{j,1}=0$ and $P_1=0$.

\begin{proposition}
\label{prop : calculus}
Let $n\geq 2$ be an  even integer, and let $a\geq 1$ and $b\geq 0$ be the unique integers such that $n=2^a(2b+1)$.
For every $1\leq k \leq 2^{a+1}-1$ the relation
\[
w_k(\xi\wr\Z/2)=0
\]
in the quotient algebra $H^*(X; \F_2)/\langle w_1(\xi\wr\Z/2), w_2(\xi\wr\Z/2),\cdots, w_{k-1}(\xi\wr\Z/2) \rangle$ is
\begin{compactenum}[\rm (i)]

\item in the case when $k$ is odd and $1\leq k \leq 2^{a+1}-3$ equivalent to
\begin{equation}
\label{k odd}
\sum_{i=\max\{ 0,k-n\} }^{[\frac{k-1}{2}]} Q_{i,k-i} = 0,
\end{equation}

\item in the case when $k$ is even and $k\notin\{2^{a+1}-2^a,2^{a+1}-2^{a-1},\ldots , 2^{a+1}-2\}$ equivalent to
\begin{equation}
\label{k even1}
P_{\frac{k}{2}} = \sum_{i=\max\{ 0,k-n\}}^{[\frac{k-1}{2}]} Q_{i,k-i},
\end{equation}

\item in the case when $k=2^{a+1} - 2^{r+1}$, where $r\in\{0,1,\ldots, a-1\}$, equivalent to
\begin{equation}
\label{k even2}
P_{2^a-2^r} = t^{2^{a+1}-2^{r+1}}+ \sum_{i=\max\{0,k-n\}}^{2^a-2^r-1}Q_{i,k-i} ,
\end{equation}

\item in the case when $k=2^{a+1}-1$ equivalent to
\begin{equation}
\label{last}
t\cdot P_{2^a -1} = \sum_{i=\max\{ 0,k-n\}}^{2^a-1}Q_{i,k-i}.
\end{equation}
\end{compactenum}

\end{proposition}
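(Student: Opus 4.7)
The plan is to apply the master formula of Theorem~\ref{thm : SW classes of the wreath product} to extract the homogeneous degree-$k$ part of $u(\xi)$, namely
\[
w_k(\xi\wr\Z/2)=\sum_{i=\max\{0,k-n\}}^{\lfloor (k-1)/2\rfloor}Q_{i,k-i}+\sum_{r=\max\{0,k-n\}}^{\lfloor k/2\rfloor}\binom{n-r}{k-2r}\,t^{k-2r}\,P_r,
\]
and then to simplify this expression modulo the ideal $J_{k-1}:=\langle w_1(\xi\wr\Z/2),\ldots,w_{k-1}(\xi\wr\Z/2)\rangle$ by induction on $k$. The inductive hypothesis is that every relation $w_j(\xi\wr\Z/2)=0$ with $j<k$ has already been brought to the format asserted in parts~(i)--(iv).

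The reduction engine combines two ingredients: the multiplication rule $t\cdot Q=0$ from~\eqref{eq : multiplication Q and t} (or~\eqref{eq : multiplication Q and t 2}), and the shape of the inductive relations for $w_{2r}(\xi\wr\Z/2)$. For each $r$ with $1\leq r<\lfloor k/2\rfloor$ the hypothesis rewrites $P_r$ modulo $J_{2r-1}$ as a sum of $Q_{i,2r-i}$'s plus an $\F_2$-combination of $t^{2(r-r')}P_{r'}$ with $r'<r$ (and, in the exceptional subcases matching part~(iii), a single $t^{2r}$). Multiplying by the positive power $t^{k-2r}$ annihilates the $Q$-part and leaves a strictly lower-$r$ combination; iterating down to $r'=0$ (with $P_0=1$) collapses each $t^{k-2r}P_r$, together with the $r=0$ contribution $\binom{n}{k}t^k$, into a single $\F_2$-multiple $C_k\,t^k$. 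Modulo $J_{k-1}$ we therefore obtain
\[
w_k(\xi\wr\Z/2)\equiv \epsilon_1\,P_{k/2}+\epsilon_2\,t\cdot P_{2^a-1}+\sum_i Q_{i,k-i}+C_k\,t^k,
\]
where $\epsilon_1=1$ if and only if $k$ is even, and $\epsilon_2=1$ only in case~(iv); the $r=2^a-1$ summand survives there because $\binom{n-2^a+1}{1}=2^{a+1}b+1\equiv 1\pmod 2$, and the proposition chooses to record that term on the left-hand side rather than further reducing it.

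Matching this congruence against~\eqref{k odd}--\eqref{last} reduces the proposition to the arithmetic statement that $C_k\equiv 0\pmod 2$ in cases~(i), (ii), (iv), and $C_k\equiv 1\pmod 2$ in case~(iii). Unfolded, $C_k$ is a sum, indexed by strictly increasing chains $0=r_0<r_1<\cdots<r_\ell$, of products $\prod_j\binom{n-r_j}{2(r_{j+1}-r_j)}\cdot\binom{n-r_\ell}{k-2r_\ell}$, and its parity is governed by Lucas' theorem applied to the base-$2$ expansions of $n=2^a(2b+1)$ and of the various $k-2r$. The bound $k\leq 2^{a+1}-1$ keeps the base-$2$ digits of $k$ strictly within the bottom $a+1$ places, which is where the $2$-adic profile of $n$ forces the cancellations. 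I expect the main obstacle to be precisely this combinatorial collapse: proving that the telescoping sum of binomial-coefficient products collapses modulo~$2$ exactly on the threshold set $\{2^{a+1}-2^{r+1}:0\leq r\leq a-1\}$. A convenient shortcut is to recognise $C_k$ as the coefficient of $t^k$ in a single generating polynomial inherited from the factors $(1+t)^{n-r}$ appearing in the master formula, reducing the whole verification to one Lucas-style parity check. The four identities then drop out by elementary rearrangement, with case~(iv) additionally allowing a consistency check via the already-proved case~(iii) relation at $k'=2^{a+1}-2$, under which $t\cdot P_{2^a-1}$ simplifies to $t^{2^{a+1}-1}$.
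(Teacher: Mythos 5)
Your plan starts from the same master formula
\[
w_k(\xi\wr\Z/2)=\sum_{i}Q_{i,k-i}+\sum_{r}\binom{n-r}{k-2r}\,t^{k-2r}\,P_r
\]
and uses the same two tools the paper uses (the relation $t\cdot Q=0$ and induction on $k$), so the overall strategy is the paper's. The problem is that you describe the structure of the reduction but never carry out the parity computations that actually constitute the proof. The paper's argument is almost entirely a case-by-case Lucas-theorem analysis: it separates $k$ odd $\le 2^a-1$, $k$ even $\le 2^a-2$, $k=2^a$, $k$ odd in $(2^a,2^{a+1}-3]$, $k$ even off the threshold set, $k=2^{a+1}-2^{r+1}$, and $k=2^{a+1}-1$, and in each case identifies exactly which $\binom{n-i}{k-2i}$ are odd by examining dyadic expansions of $n-i$ and $k-2i$. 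Your sketch acknowledges this (``I expect the main obstacle to be precisely this combinatorial collapse'') but only checks a single binomial coefficient (the $r=2^a-1$ term in case (iv)); the rest is left as an expectation. Since showing $C_k\equiv 0$ off the threshold set and $C_k\equiv 1$ on it \emph{is} the content of parts (i)--(iv), this is a genuine gap, not a cosmetic omission.

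A secondary issue is the description of the inductive step. You write that the hypothesis expresses $P_r$ as $Q$'s plus ``an $\F_2$-combination of $t^{2(r-r')}P_{r'}$ with $r'<r$,'' and propose to telescope these through chains $0=r_0<\cdots<r_\ell$. But the proposition's own inductive relations are already fully reduced: modulo $J_{2r-1}$ each $P_r$ is either a sum of $Q$'s (case (ii)) or $t^{2r}$ plus a sum of $Q$'s (case (iii)), with no lower-index $P_{r'}$ on the right. The paper exploits exactly this: each $t^{k-2r}P_r$ collapses in one step, and the only delicate point is the single chain-length-two reduction inside case (iii), where $\binom{n-2^a+2^{r+1}}{2^{r+1}}=1$ produces a $t^{2^{r+1}}P_{2^a-2^{r+1}}$ term that is then absorbed by the previous threshold relation. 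Your chain expansion is therefore heavier machinery than is needed, and the proposed ``single generating polynomial'' shortcut is not developed and, as stated, does not account for the fact that the indicator of whether $P_r$ carries a $t^{2r}$ term is itself the output of the case analysis you are trying to sidestep. Finally, the paper handles $a=1$ by a separate short direct computation before launching the induction for $a\ge 2$; your proposal does not address the $a=1$ base case at all.
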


\begin{proof}
For the proof we use equations \eqref{XXX} and \eqref{XXXY} which are equivalent to the following sequence of equalities
\begin{equation}
\label{sw class}
w_k(\xi\wr\Z/2) = \sum_{i=\max\{ 0,k-n\}}^{[\frac{k-1}{2}]}Q_{i,k-i} + \sum_{i=\max\{ 0,k-n\}}^{[\frac{k}{2}]}\binom{n-i}{k-2i}t^{k-2i}P_i.
\end{equation}
In computations of binomial coefficients we use the classical Lukas's theorem \cite{Lucas1978}:
Let $a=a_s2^s +a_{s-1}2^{s-1} + \cdots +a_0$ and $b= b_s2^s +b_{s-1}2^{s-1}+\cdots +b_0$ be $2$-adic expansions of the natural numbers $a$ and $b$, with $a_i,b_i \in \{0,1\}$, where $a_s = 1$, while $b_s$ might be equal to $0$. Then
\begin{equation}
\label{bin coef}
\binom{a}{b} \equiv 0\quad (\mathrm{mod}~2)\quad\text { if and only if }\quad a_i < b_i \text{ for some } 0\leq i\leq s.
\end{equation}
Now the proof of the proposition follows in several steps by considering several independent cases.

\smallskip
First we establish the claim in the case when $a=1$, that is for $n=2(2b+1)$.
In this case the equality \eqref{sw class}, for $1\leq k\leq 3=2^{a+1}-1$, implies  that
\[
Q_{0,1}=0, \qquad\qquad P_1=t^2 +Q_{0,2}, \qquad\qquad t\cdot P_1 = Q_{0,3}+Q_{1,2},
\]
as claimed by the proposition

\smallskip
Now let us assume that $n$ is fixed and $a\geq 2$, meaning $4\mid n$, and let $1\leq k\leq 2^{a+1}-1$.
We prove the proposition in this case using induction on $k$, where for the induction hypothesis we take that the claim is true for all values smaller than $k$.
We investigate several cases.

\begin{compactitem}
%---------------
\item Let $k=1$.
      From \eqref{sw class} we get $Q_{0,1}+\binom{n}{1}t=0$.
      Since $n$ is even this simplifies to the equality $Q_{0,1}=0$, which is in compliance with~\eqref{k odd}.

%---------------
\item Let $k=2$.
	  The equality~\eqref{sw class} now reads:
		\[
		w_2(\xi\wr\Z/2)= Q_{0,2} + \binom{n}{2}t^2 +\binom{n-1}{0}P_1=Q_{0,2}+P_1=0
		\]
		confirming ~\eqref{k even1}.

%---------------
\item Let $k$ be odd, and let $k\leq 2^a-1$.
	  Assume that the claim holds for all values $1,2,\cdots , k-1$.	
	  Since $t\cdot Q_{i,j}=0$ for all values of $i$ and $j$, and, by inductive hypothesis, all $P_i$ are sum of $Q$'s, for $1\leq i\leq k-1$, we conclude that $t\cdot P_i=0$.	
	  Therefore, in the second sum of the formula \eqref{sw class} the only possibly non-zero summands are for $i=0$ or $k-2i=0$.
	  Because $k$ is odd the second possibility is not possible.
	  On the other hand the first summand is zero since for $i=0$ the binomial coefficient $\binom{n}{k}=0$.
	  Thus we obtained~\eqref{k odd}.
	
%---------------
\item Let $k$ be even, and let $k\leq 2^a-2$.
	  With the same reasoning as in the case for $k$ odd, all summands in the second sum of the formula \eqref{sw class} are zero except when $i=0$ and $k-2i=0$.
	  For $i=0$ the coefficient is equal to $\binom{n}{k}$, but the last $a$ digits in dyadic expansion of $n$ are zeros, while this is not the case for $k$ since $k<2^a$.
	  Therefore, the coefficient $\binom{n}{k}$ vanishes.
	  For $k-2i=0$ the corresponding coefficient is $1$ and we have verified~\eqref{k even1}.	
	
%---------------
\item Let $k=2^a$.
	  As before, all summands in the second sum of the formula \eqref{sw class} are zero except for $i=0$ and $k-2i=0$.
	  For $i=0$ the binomial coefficient is equal to $\binom{n}{k} = \binom{2^a(2b+1)}{2^a}=1$ with the corresponding summand equal to $t^{2^a}$.
	  For $k-2i=0$, that is $i=2^{a-1}$, the related summand is $P_{2^{a-1}}$.
	  Consequently, we get~\eqref{k even2} in the case $r=a-1$.
	
%---------------
\item Let $k$ be odd, and let $2^a+1\leq k\leq 2^{a+1}-3$.
      Then there exists a unique integer $r\in \{ 1,2, \cdots , a-1\}$ such that $2^{a+1}-2^{r+1}+1\leq k\leq  2^{a+1} - 2^r-1$.
      Now in the second sum of the formula \eqref{sw class}, using inductive hypothesis and the fact that $t\cdot Q_{i,j}=0$, we conclude that possible non-zero summands are obtained only for $i\in \{ 0, 2^a-2^{a-1},\cdots , 2^a-2^{r+1}\}$.
      The binomial coefficients of those summands are equal to $\binom{n-2^a+2^j}{k-2^{a+1}+2^{j+1}}$, for some $j=r,r+1,\ldots ,a$.
      All of them vanish since $n-2^a+2^j$ is even and  $k-2^{a+1}+2^{j+1}$ is odd.
      So we get~\eqref{k odd}.

%---------------
\item Let $k$ be even, $k\geq 2^a+2$ with $k\notin\{2^{a+1}-2,2^{a+1}-2^2,\ldots,2^{a+1}-2^{a-1}\}$.
	  Then there exists a unique integer $r\in \{ 0, 1, \cdots , a-1\}$ such that $2^{a+1}-2^{r+1}+2\leq k\leq 2^{a+1} - 2^r-2$.
	  As before, using inductive hypothesis, all summands from the second sum of the formula \eqref{sw class} which are possibly non-zero are obtained for $i\in \{ 0, \frac{k}{2}, 2^a-2^{a-1},\cdots , 2^a-2^r\}$.
	  We analyze three different cases.
	  	\begin{compactitem}
	  	\item For $i=0$ the binomial coefficient is $\binom{n}{k}$ and is equal to zero since $2^a \nmid k$.
	  	\item For $i=\frac{k}{2}$ the binomial coefficient is equal to $1$ while the corresponding summand is equal to $P_{\frac{k}{2}}$.
	   	\item For $i=2^a-2^j$, where $j\in\{r, r+1, \cdots ,a-1\}$, the binomial coefficient is equal to $\binom{n-2^a+2^j}{k-2^{a+1}+2^{j+1}}$.
	   		  In this case we have to examine dyadic expansion of the numbers involved more closely.
	   		  We get the following:
				\[
					n-2^a+2^j = n_1\ldots n_s0\ldots 010\ldots 0,
				\]
			  where $1$ is on the $(j+1)$-st place from the right.
			  It could happen that all $n_1, \ldots , n_s$ are equal to 0.
			  On the other hand since $2^{a+1}-2^{r+1}+2\leq k\leq 2^{a+1} - 2^r-2$ we have
			  \[
				k=1\ldots 10k_1\ldots k_r,
			  \]
			  with $a-r$ $1$'s and some $k_s=1$.
			  Consequently,
			  \[
				k-2^{a+1}+2^{j+1}= 1\ldots 10k_1\ldots k_r,
			  \]
			  with $j-r$ 1's and some $k_s=1$.
			  Therefore, some of the first $r$ coefficients from the right of the number $k-2^{a+1}+2^{j+1}$ is equal to $1$, while all of the first $r$ coefficients of the number $n-2^a+2^j$ are equal to $0$, and we may conclude that all binomial coefficients have to vasnih.
		\end{compactitem}
 	  Thus, we verified \eqref{k even1}.

%---------------
\item 	Let $k=2^{a+1}-2^{r+1}$, and let $0\leq r\leq a-2$.
		As before, using inductive hypothesis, we conclude that all summands from the second sum of the formula \eqref{sw class} which have a chance to be  non-zero are obtained for $i\in \{ 0, \frac{k}{2}, 2^a-2^{a-1},\cdots , 2^a-2^r\}$.
		We discuss three separate cases.
		\begin{compactitem}
			\item	For $i=0$ the binomial coefficient is equal to $\binom{n}{k}$ and is equal to $0$, since dyadic expansion of $k$ is $k=1\ldots 10\ldots 0$, with $a-r$ ones and $r+1$ zeros.
			Therefore, $k$ has at least one non-zero entry in the first $a$ places from the right, while $n$ has only zeros.
			
			\item For $i=\frac{k}{2}=2^a-2^r$, the binomial coefficient is $1$ and the corresponding summand is  $P_{2^a-2^r}$.
			
			\item For $i=2^a-2^j$, where $j\in \{ r+1,\ldots ,a-1\}$, we have that the relevant  binomial coefficient is $\binom{n-2^a+2^j}{2^{a+1}-2^{r+1}-2^{a+1}+2^{j+1}}$.
			The dyadic expansion of the top entry in the binomial coefficient is of the form
\[
n-2^a+2^j = n_1\ldots n_s0\ldots 010\ldots 0,
\]
having $a-j$ and $j$ zeros, respectively.
On the other hand the lower entry has the following dyadic expansion
\[
2^{a+1}-2^{r+1}-2^{a+1}+2^{j+1}=2^{j+1}-2^{r+1}=1\ldots 10\ldots 0,
\]
with $j-r$ ones and $r+1$ zeros.
The only non-zero coefficient appears for $j=r+1$ with the summand $t^{2^{r+1}}\cdot P_{2^a-2^{r+1}}$.
Hence we have
\[
P_{2^a-2^r} = \sum_{i=\max\{ 0,k-n\}}^{2^a-2^r-1}Q_{i,k-i}+ t^{2^{r+1}}\cdot P_{2^a-2^{r+1}}
\]
Use inductive hypothesis for $P_{2^a-2^{r+1}}$ and get
\begin{multline*}
P_{2^a-2^r} =\sum_{i=\max\{ 0,k-n\}}^{2^a-2^r-1}Q_{i,k-i}+ t^{2^{r+1}}\left (t^{2^{a+1}-2^{r+2}}+ \sum_{i=\max\{ 0, k-n\}}^{2^a-2^{r+1}-1}Q_{i,2^{a+1}-2^{r+2}-i}\right )=\\
\sum_{i=\max\{ 0,k-n\}}^{2^a-2^r-1}Q_{i,2^{a+1}-2^{r+1}-i} + t^{2^{a+1}-2^{r+1}}.	
\end{multline*}

		\end{compactitem}
We completed the proof of \eqref{k even2}.

%---------------
\item 	Let $k=2^{a+1}-1$.
		Since $k$ is odd in the second sum of the formula \eqref{sw class}, if $i$ is even, the binomial coefficient has to vanish since $n-i$ is even and $k-2i$ is odd.
		If $i$ is odd, all summands are zero unless $i=2^a-2^r$, for some $r\in \{ 0, 1, \ldots , a-1\}$, because they are of the form $t\cdot P_i$ and for such $i$, $P_i$ is a sum of $Q$'s.
		Now let $i=2^a-2^r$, for some $0\leq r \leq a-1$.
		In this case the binomial coefficient is equal to $\binom{n-2^a+2^r}{2^{a+1}-1-2^{a+1}+2^{r+1}}$ and dyadic expansions of the binomial coefficient are equal to:
\[
n-2^a+2^r = n_1\ldots n_s0\ldots 010\ldots 0,
\]
where there are $a-r$ and $r$ zeros, respectively; and
\[
2^{a+1}-1-2^{a+1}+2^{r+1}=2^{r+1}-1 = 1\ldots 1,
\]
with $r+1$ ones.
The only $i$ for which the coefficient is equal to $1$ is for $r=0$, that is $i=2^a-1$.
For such $i$, an appropriate summand is equal to $t\cdot P_{2^a-1}$, and we proved~\eqref{last}.
\end{compactitem}

Since we cover all possibilities we have completed the proof of the proposition.
\end{proof}

\begin{corollary}
\label{cor}
Let $n\geq 2$ be an even integer, and let $a\geq 1$ and $b\geq 0$ be the unique integers such that $n=2^a(2b+1)$.
In the quotient algebra $ H^*(X; \F_2)/\langle w_1(\xi\wr\Z/2), w_2(\xi\wr\Z/2),\ldots, w_{2^{a+1}-1}(\xi\wr\Z/2) \rangle $ holds
\[
t^{2^{a+1}} =0.
\]
\end{corollary}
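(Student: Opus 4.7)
The plan is to derive $t^{2^{a+1}} = 0$ in the quotient algebra as a two-step consequence of Proposition \ref{prop : calculus}, leveraging throughout the annihilation identity $t \cdot Q_{i,j} = 0$ provided by \eqref{eq : multiplication Q and t} (respectively \eqref{eq : multiplication Q and t 2}). The strategy is to extract two different expressions for the element $t \cdot P_{2^{a}-1}$ from the relations furnished by the proposition, and then to equate them.

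First I would specialize relation \eqref{k even2} to the extreme case $r = 0$, corresponding to $k = 2^{a+1} - 2$. This yields in the quotient algebra the identity
\[
P_{2^{a}-1} \;=\; t^{2^{a+1}-2} \;+\; \sum_{i=\max\{0,\, k-n\}}^{2^{a}-2} Q_{i,\, k-i}.
\]
Multiplying both sides by $t$ and using $t \cdot Q_{i,j} = 0$ to annihilate every summand of the $Q$-sum, I obtain the clean equality
\[
t \cdot P_{2^{a}-1} \;=\; t^{2^{a+1}-1}.
\]

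Next I would invoke relation \eqref{last}, which for $k = 2^{a+1} - 1$ furnishes a second expression
\[
t \cdot P_{2^{a}-1} \;=\; \sum_{i=\max\{0,\, k-n\}}^{2^{a}-1} Q_{i,\, k-i}
\]
in the same quotient algebra. Equating the two expressions for $t \cdot P_{2^{a}-1}$ shows that $t^{2^{a+1}-1}$ is itself a sum of $Q$-classes. Multiplying once more by $t$ then annihilates every term on the right, producing $t^{2^{a+1}} = 0$, as claimed.

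I do not foresee any substantial obstacle at this stage. The entire combinatorial difficulty---the case analysis by dyadic expansions and the repeated use of Lucas's theorem to control which binomial coefficients in \eqref{sw class} vanish modulo $2$---has already been absorbed into the statement of Proposition \ref{prop : calculus}. Once those relations are in hand, the corollary reduces to the two-line manipulation above, driven entirely by the fact that $t$ annihilates every class of the form $Q_{i,j}$.
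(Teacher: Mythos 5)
Your argument is correct and is essentially the paper's own proof: both use \eqref{k even2} with $r = 0$ to rewrite $t^{2^{a+1}-2}$ (up to $Q$-terms annihilated by $t$) as $P_{2^{a}-1}$, then use \eqref{last} to express $t\cdot P_{2^{a}-1}$ as a sum of $Q$-terms, and finally one more multiplication by $t$ kills everything. The paper simply starts from $t^{2^{a+1}} = t^2 \cdot t^{2^{a+1}-2}$ and substitutes, while you isolate the intermediate identity $t\cdot P_{2^{a}-1} = t^{2^{a+1}-1}$ explicitly before the final step; the two are trivially the same computation.
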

\begin{proof}
We have
\[
t^{2^{a+1}}= t^2\cdot t^{2^{a+1}-2}= t^2\left ( t^{2^{a+1}-2} +\sum_{i=\max\{ 0,2^{a+1}-2-n\}}^{2^a-2}Q_{i,2^{a+1}-2-i}\right ).
\]
Now we use~\eqref{k even2}, for $r=0$, and~\eqref{last} and get
\[
t^{2^{a+1}} = t^2\cdot P_{2^a-1} = t \left ( \sum_{i=\max \{ 0,2^{a+1}-n-1\}}^{2^a-1}Q_{i,2^{a+1}-1-i}\right )=0.
\]
\end{proof}

\medskip
Finally we prove Theorem~\ref{th : main - 01} and Theorem~\ref{th : main - 02}.

\begin{proof}[Proof of Theorem~\ref{th : main - 01}]
We consider three different cases depending on value of integer $a\geq 0$.

\smallskip
Let $n\geq 1$ be an odd integer, that is $a=0$.
Since $n$ is a multiple of $1$ from Proposition \ref{prop : Z/2-map between Grassmannians} we get a $\Z/2$-map $\widetilde{g}\colon \widetilde{G}_{1}(\R^{2})\longrightarrow \widetilde{G}_{n}(\R^{2n})$.
Furthermore, applying Proposition \ref{prop : covering map a Z/2-map} we have an additional $\Z/2$-map $c\colon \widetilde{G}_{n}(\R^{2n})\longrightarrow G_{n}(\R^{2n})$.
The composition map
\[
\xymatrix{
\widetilde{G}_{1}(\R^{2})\ar[r]^-{\widetilde{g}} & \widetilde{G}_{n}(\R^{2n})\ar[r]^-{c} & G_{n}(\R^{2n}),
}
\]
and the Corollary \ref{cor : index for odds} yield the following relation between the indexes
\[
\ind_{\Z/2}(\widetilde{G}_{1}(\R^{2});\F_2) \supseteq \ind_{\Z/2}(\widetilde{G}_{n}(\R^{2n});\F_2) \supseteq \ind_{\Z/2}(G_{n}(\R^{2n});\F_2) = \langle t^2\rangle.
\]
Since $\widetilde{G}_{1}(\R^{2})\cong\SO(2)$ is a sphere $S^1$ equipped with a free $\Z/2$ index we have that $\ind_{\Z/2}(\widetilde{G}_{1}(\R^{2});\F_2)=\langle t^2\rangle$.
Thus, we concluded the proof of Theorem \ref{th : main - 01} for $n$ odd by showing that
\[
\ind_{\Z/2}(\widetilde{G}_{n}(\R^{2n});\F_2)=\langle t^2\rangle.
\]

\smallskip
Next, let $a=1$, that is $n=2(2b+1)$.
From Proposition \ref{prop : calculus} with $k=2$, using the fact that $w_1=0$ and consequently  $Q_{1,j}=0$, for all $j$, and $P_1=0$, we have that $t^2 = Q_{0,2}$.
Since there are no other relations for $Q_{0,2}$ we conclude that $t^2 \neq 0$ and that $t^3 = tQ_{0,2}=0$. Therefore,
\[
\ind_{\Z/2}(\widetilde{G}_{n}(\R^{2n});\F_2) = \langle t^3 \rangle.
\]

\smallskip
Now consider the case $a\geq 2$.

First note that from Proposition~\ref{prop : Z/2-map between Grassmannians} and using Corollary~\ref{cor} we get
\[
\ind_{\Z/2}(\widetilde{G}_{2^a}(\R^{2^{a+1}});\F_2) \supseteq \ind_{\Z/2}(\widetilde{G}_{n}(\R^{2n});\F_2)  \supseteq \langle t^{2^{a+1}}\rangle.
\]
Therefore, it is enough to prove the Theorem~\ref{th : main - 01} for $n=2^a$. 

Let $n=2^a$.
As we know the index is generated by the first power of $t$ which is equal to zero in the quotient
\[
H^*(W; \F_2)/\langle w_2(\xi\wr\Z/2),\ldots, w_{2^{a+1}}(\xi\wr\Z/2) \rangle.
\]
What is left to be proved is that
\[
t^{2^{a+1}-1} \notin \langle  w_2(\xi\wr\Z/2),\ldots, w_{2^{a+1}}(\xi\wr\Z/2) \rangle.
\]
In order to simplify calculation we will expand the ideal with more generators and show that  $t^{2^{a+1}-1}$ is not in the bigger ideal.
Let
\[
I:= \langle \{ w_2(\xi\wr\Z/2),\ldots, w_{2^{a+1}}(\xi\wr\Z/2)\}\cup \{ Q_{i,j}| 1 \leq i+j \leq 2^{a+1}-2 \}\rangle.
\]
In the quotient $H^*(W; \F_2)/I$ we have from Proposition \ref{prop : calculus} that relations from ideal $I$ are equivalent with the following
\begin{eqnarray*}
P_k=0, &\qquad & \text{ for $1\leq k \leq 2^a-1$ and } k\notin \{ 2^a-2^{a-1},2^a-2^{a-2},\ldots ,2^a-1\},\\
P_k=t^{2k}, &\qquad &\text{ for $1\leq k\leq 2^a-1$ and } k\in \{ 2^a-2^{a-1},2^a-2^{a-2},\ldots ,2^a-1\},
\end{eqnarray*}
and
\[
t\cdot P_{2^a -1} = Q_{2^a-1,2^a}.
\]
Now from~\eqref{XXXY} and $w_{2^{a+1}}=0$ we get
\[
P_{2^a}=0.
\]
It follows that all $t^k\neq 0$ for $1\leq k \leq 2^{a+1}-2$ including $t^{2^{a+1}-2} = P_{2^a-1}$ and that
\[
t^{2^{a+1}-1} = t\cdot P_{2^a -1} = Q_{2^a-1,2^a}.
\]
Since this is the only relation involving $Q_{2^a-1,2^a}$, we get that the right hand side is not equal to 0. So $t^{2^{a+1}-1} \notin I$ and it does not belong in the smaller ideal $\langle w_2(\xi\wr\Z/2),\ldots, w_{2^{a+1}}(\xi\wr\Z/2) \rangle$ as well.

Therefore, using Corollary~\ref{cor} we get
\[
\ind_{\Z/2}(\widetilde{G}_{2^a}(\R^{2^{a+1}});\F_2) = \langle t^{2^{a+1}} \rangle.
\]
This concludes the proof of Theorem~\ref{th : main - 01}.
\end{proof}

\begin{proof}[Proof of Theorem~\ref{th : main - 02}]
The proof is given in three steps depending on the value of integer $a$.
For $a=0$, that is $n$ odd, the statement of the theorem is established in Corollary \ref{cor : index for odds}.

\smallskip
Let $a=1$, that is $n=2(2b+1)$.
First note that from Corollary~\ref{cor} we have that $t^4=0$ that is
\[
\ind_{\Z/2}(G_{n}(\R^{2n});\F_2)  \supseteq \langle t^{4}\rangle.
\]
On the other hand, from Proposition~\ref{prop : Z/2-map between Grassmannians} and Proposition~\ref{prop : index for n=1 and n=2}, or Example \ref{ex : n=2}, we get
\[
\ind_{\Z/2}(G_{n}(\R^{2n});\F_2)  \subseteq  \ind_{\Z/2}(G_{2}(\R^{4});\F_2)=\langle t^{4}\rangle.
\]
Therefore
\[
\ind_{\Z/2}(G_{n}(\R^{2n});\F_2)=\langle t^{4}\rangle.
\]

\smallskip
For the case $a\geq 2$ the proof is exactly the same as in the proof of the Theorem~\ref{th : main - 01} but we present it nevertheless.

First note that from Proposition~\ref{prop : Z/2-map between Grassmannians} and using Corollary~\ref{cor} we get
\[
\ind_{\Z/2}(G_{2^a}(\R^{2^{a+1}});\F_2) \supseteq \ind_{\Z/2}(G_{n}(\R^{2n});\F_2)  \supseteq \langle t^{2^{a+1}}\rangle.
\]
Therefore, it is enough to prove the Theorem~\ref{th : main - 01} for $n=2^a$. 

Let $n=2^a$.
The index is generated by the first power of $t$ which is equal to zero in the quotient
\[
H^*(U; \F_2)/\langle w_1(\xi\wr\Z/2),\ldots, w_{2^{a+1}}(\xi\wr\Z/2) \rangle.
\]
Thus, what is left to be proved is that
\[
t^{2^{a+1}-1} \notin \langle  w_1(\xi\wr\Z/2),\ldots, w_{2^{a+1}}(\xi\wr\Z/2) \rangle.
\]
In order to simplify calculation we will expand the ideal with more generators and show that  $t^{2^{a+1}-1}$ is not in the bigger ideal.
Let
\[
I:= \langle \{ w_1(\xi\wr\Z/2),\ldots, w_{2^{a+1}}(\xi\wr\Z/2)\}\cup \{ Q_{i,j}| 1 \leq i+j \leq 2^{a+1}-2 \}\rangle.
\]
In the quotient $H^*(U; \F_2)/I$ we have from Proposition \ref{prop : calculus} that relations from ideal $I$ are equivalent with the following
\begin{eqnarray*}
P_k=0, &\qquad & \text{ for $1\leq k \leq 2^a-1$ and } k\notin \{ 2^a-2^{a-1},2^a-2^{a-2},\ldots ,2^a-1\},\\
P_k=t^{2k},  &\qquad & \text{ for $1\leq k\leq 2^a-1$ and } k\in \{ 2^a-2^{a-1},2^a-2^{a-2},\ldots ,2^a-1\},	
\end{eqnarray*}
and
\[
t\cdot P_{2^a -1} = Q_{2^a-1,2^a}.
\]
Next from~\eqref{XXXY} and $w_{2^{a+1}}=0$ we get
\[
P_{2^a}=0.
\]
It follows that all $t^k\neq 0$ for $1\leq k \leq 2^{a+1}-2$ including $t^{2^{a+1}-2} = P_{2^a-1}$ and that
\[
t^{2^{a+1}-1} = t\cdot P_{2^a -1} = Q_{2^a-1,2^a}.
\]
Since this is the only relation involving $Q_{2^a-1,2^a}$, we get that the right hand side is not equal to 0. So $t^{2^{a+1}-1} \notin I$ and it does not belong in the smaller ideal $\langle w_1(\xi\wr\Z/2),\ldots, w_{2^{a+1}}(\xi\wr\Z/2) \rangle$ as well.
Therefore, using Corollary~\ref{cor} we get
\[
\ind_{\Z/2}(G_{2^a}(\R^{2^{a+1}});\F_2) = \langle t^{2^{a+1}} \rangle.
\]
This concludes the proof of Theorem~\ref{th : main - 02}.
\end{proof}

%%%%%%%%%%%%%%%%%%%%%%%%%%%%%%%%%%%%%%%%%%%%%%%%%%%%%%%%%%%%%%%%%%%%%%%%%%%%%%%%%
\begin{small}
\providecommand{\noopsort}[1]{}
\providecommand{\bysame}{\leavevmode\hbox to3em{\hrulefill}\thinspace}
\providecommand{\MR}{\relax\ifhmode\unskip\space\fi MR }
% \MRhref is called by the amsart/book/proc definition of \MR.
\providecommand{\MRhref}[2]{%
  \href{http://www.ams.org/mathscinet-getitem?mr=#1}{#2}
}
\providecommand{\href}[2]{#2}

\end{small}
\end{document}